\newtheorem{theorem}{Theorem}
\newtheorem{lemma}{Lemma}
\newtheorem{proposition}{Proposition}
\newtheorem{corollary}{Corollary}
\theoremstyle{definition}
\newtheorem{definition}{Definition}
\newtheorem{assumption}{Assumption}
\newtheorem{remark}{Remark}
\newcommand{\secf}{\boldsymbol{I\!I}}
\newcommand{\sym}{\mathrm{sym}}
\newcommand{\loc}{\mathrm{loc}}
\newcommand{\ho}{\mathrm{hom}}
\newcommand{\dist}{\operatorname{dist}}
\newcommand{\e}{\varepsilon}
\newcommand{\dto}{{\downarrow}}
\newcommand{\R}{\mathbb{R}}
\newcommand{\Z}{\mathbb{Z}}
\newcommand{\N}{\mathbb{N}}
\newcommand{\Y}{\mathcal{Y}}
\renewcommand{\d}{{d}}
\newcommand{\id}{\mathrm{Id}}
\renewcommand{\i}{\mathrm{i}}
\newcommand\wto{\rightharpoonup}
\newcommand\wtto{\stackrel{2}{\rightharpoonup}}
\newcommand\stto{\stackrel{2}{\rightarrow}}
\newcommand{\cZ}{{\chi_{\nabla u}}}
\newcommand{\ck}{{\chi_{k}}}
\newcommand{\ct}{{\chi_{\nabla u,T}}}
\newcommand{\cts}{{\chi_{\nabla u,T}^*}}
\newcommand{\step}[1]{\noindent \textbf{Step #1.}}
\def\av{\textrm{av}}
\def\hom{\textrm{hom}}
\def\iso{\textrm{iso}}
\author[S. Neukamm]{Stefan Neukamm}
\address[Stefan Neukamm]{Weierstrass Institute for Applied Analysis and
  Stochastics, Mohrenstra\ss e 39,
  D-10117 Berlin, Germany}
\email{stefan.neukamm@wias-berlin.de}
\author[H. Olbermann] {Heiner Olbermann}
\date{\today}
\address[Heiner Olbermann]{Hausdorff Center for Mathematics \& Institute for Applied Mathematics,
University of Bonn, Endenicher Allee 60, 53115 Bonn, Germany}
\email{heiner.olbermann@hcm.uni-bonn.de}
\title{Homogenization of the nonlinear bending theory for plates}
\begin{document}


\date{\today}

\xdefinecolor{green}{rgb}{0,0.4,0.1}
\renewcommand{\H}[1]{{\color{green} #1}}
\renewcommand{\S}[1]{{\color{blue} #1}}

\maketitle
\begin{abstract}
We carry out the spatially periodic homogenization of nonlinear bending theory for
plates. The derivation is rigorous in the sense of
$\Gamma$-convergence. In contrast to what one naturally would expect,
our result shows that the limiting functional is not simply a
quadratic functional of the second fundamental form of the deformed plate  as it is the case in nonlinear plate theory. It turns out that the limiting functional discriminates between whether the deformed plate is locally shaped like a ``cylinder'' or not.
For the derivation we investigate the oscillatory behavior of
sequences of second fundamental forms associated with isometric
immersions of class $W^{2,2}$, using two-scale convergence. This is a
 non-trivial task, since one has to treat two-scale convergence
in connection with a nonlinear differential constraint.
\end{abstract}

\section{Introduction}
In this article we study the periodic homogenization of the nonlinear plate model introduced by
Kirchhoff in 1850. In that model the elastic behavior of thin plates
-- undergoing bending only -- are described as follows: The reference configuration of the plate
in its undeformed, flat state is modeled by a bounded Lipschitz domain
$S\subset\R^2$, while \textit{bending deformations} are described
by \textit{isometric immersions} $u:S\to\R^3$ -- differentiable maps
that satisfy the isometry constraint
\begin{equation}\label{ass:isom}
  \partial_ju\cdot \partial_ju=\delta_{ij},
\end{equation}
where $\delta_{ij}$ denotes the Kronecker delta. The elastic \textit{bending energy} of the deformed
plate $u(S)$ is given by the variational integral 
\begin{equation}\label{eq:energy-const}
  \int_SQ(\secf),
\end{equation}
where $\secf$ is the second fundamental form associated with $u$ (see
\eqref{eq:def:secf} below), and $Q$ is
the quadratic energy density from linearized elasticity. We are
interested in the minimizers of
\eqref{eq:energy-const}, since they are related to equilibrium shapes of thin elastic plates subject to external forces and
boundary conditions. 
Indeed, Friesecke, James, M\"uller obtained in their celebrated work \cite{Friesecke-James-Mueller-02} Kirchhoff's nonlinear plate model
from nonlinear
three-dimensional elasticity in  the zero-thickness limit. The connection is rigorous in the sense
of $\Gamma$-convergence, which roughly speaking means that (almost) minimizers to a large
class of minimization problems from three-dimensional nonlinear
elasticity converge to solutions to minimization problems associated
with the bending energy \eqref{eq:energy-const}. 
\medskip

The energy density $Q$ encodes the elastic properties of the material
and, when the material is heterogeneous, depends on $x\in S$ in
addition. In the case of a periodic composite material with small period $\e\ll 1$,
the energy density might be written in the form $Q(\frac{x}{\e},F)$
where $Q(y,F)$ is periodic in $y$. For definiteness, let $Q$ satisfy the
following

\begin{assumption}\label{ass}
  Let $Q:\R^2\times\R^{2\times 2}_{\sym}\to[0,\infty)$  be
\begin{itemize}
\item[(Q1)] measurable and $[0,1)^2$-periodic in $y\in\R^2$,
  \smallskip
\item[(Q2)] convex and quadratic in $F\in\R^{2\times 2}$,
  \smallskip
\item[(Q3)] bounded and non-degenerate in the sense of
  \begin{equation}\label{ass:Q}
    \alpha |\sym F|^2\leq Q(y,F)\leq \frac{1}{\alpha}|\sym F|^2
  \end{equation}
  for all $A\in\R^{2\times 2}$, almost every $y\in\R^2$ and for some constant of ellipticity $\alpha>0$ which is fixed from now on.
\end{itemize}
\end{assumption}
We reformulate the bending energy \eqref{eq:energy-const} as the functional $\mathcal E^\e:\,L^2(\Omega,\R^3)\to[0,\infty]$ given by
\begin{equation}\label{eq:-1}
  \mathcal E^\e(u):=
  \left\{\begin{aligned}
    &\int_S Q\left(\frac{x}{\e},\secf(x)\right)\,dx&\qquad&\text{for
    }u\in W^{2,2}_\iso(S),\\
    &\infty&&\text{else,}
  \end{aligned}\right.
\end{equation}
where $W^{2,2}_\iso(S)$ denotes the subset of maps $u\in
W^{2,2}(S,\R^3)$ that satisfy \eqref{ass:isom} almost everywhere in
$S$. 
\medskip

Our goal is to understand the homogenization limit, $\e\downarrow 0$, in the
spirit of $\Gamma$-convergence. For the description of the limit we
need to classify the geometry of surfaces $u(S)$ with $u\in
W^{2,2}_\iso(S)$. 
For simplicity, let us first assume that $u$ is a smooth isometric immersion.
Since $S$ is flat,
the Gauss curvature of the surface $u(S)$ vanishes, and by a classical
result from geometry we know that locally $u(S)$ is either
\textit{flat} (when $u$ is affine), or a developable surface. In the
latter case the surface has either the shape  of a \textit{cylinder}
or a \textit{cone}. (With slight abuse of the standard terminology, we
refer to tangent developable surfaces as \textit{cones}.) For the flat
part of the surface $u(S)$ we introduce the notation
\[  
C_{\nabla u}  =  \{\,x\in S\,:\,\text{$u(S)$ is affine in a neighborhood
  of $u(x)$}\,\}.
\]
By developability, for every point $x\in S\setminus C_{\nabla u}$, there exists a unit vector $N(x)$ such that $\nabla u$ is constant on the line
segment through $x$ with direction $N(x)$.  If there exists a unit vector $\bar
N$ such that the
set $N^{-1}(\bar N)$ has density 1 at $x$, we say that the surface has
 the shape of a \textit{cylinder}  there, and we call  $x$ a
\textit{cylindrical} point. Points $x\in S\setminus
C_{\nabla u}$ where this does not hold true  will be called \textit{conical
  } points. (This dichotomy is only valid up to a null set, cf.~Definition~\ref{def:1}.) We write $Z_{\nabla u}$ and $K_{\nabla
  u}$ to denote the set of cylindrical and conical points,
respectively. As we explain in Section~\ref{sec:geometry-basic} below, the
assumption that $u$ is smooth is unnecessary, and   these notions
extend to  $W^{2,2}$-isometric immersions, see
Definition~\ref{def:1}.
\medskip

For the definition of the limiting functional we require averaged and homogenized versions of $Q$. Since the second
  fundamental form almost surely belongs to the cone of symmetric
  $2\times2$-matrices with rank at most one,
  it suffices to define the relaxed versions of $Q$ for such matrices: for a
  unit vector $T\in\R^2$ and $\mu\in\R$ set
\begin{eqnarray}
    \label{def:av-formula}
    Q_{\av}(\mu T\otimes T)&:=&\mu^2\int_{(0,1)^2}Q(y, T\otimes
    T)\,dy,\\
    \label{def:hom-formula}
    Q_{\hom}(\mu T\otimes
    T)&:=&\mu^2\min_{\alpha\in
      W^{1,2}_{T\text{-per}}(\R)}\Big\{\,\int_{(0,1)^2}Q\Big(\,y,\left(1+\alpha'(T\cdot y)\right) T\otimes
     T\,\Big)\,dy\, \Big\};
  \end{eqnarray}
  here $W^{1,2}_{T\text{-per}}(\R)$ denotes the closure w.~r.~t. the
  $W^{1,2}$-norm of the set of doubly periodic functions in
  $C^\infty(\R)$ with periods $T\cdot e_1$ and
  $T\cdot e_2$, see Subsection~\ref{sec:homog-effects} for details.
  Note that the expression for $Q_{\hom}$ differs from the usual
  formula used for the homogenization of convex integrands -- in fact,
  as we will see in Subsection~\ref{sec:homog-effects}, it can be
  interpreted as mixture of a one-dimensional averaging and homogenization.
  \medskip

    The $\Gamma(L^2)$-limit of $\mathcal E^\e$ is then given by the functional $\mathcal E_\hom:\,L^2(\Omega,\R^3)\to[0,\infty]$,
  \begin{equation*}
    \mathcal E^\hom(u):=
    \left\{\begin{aligned}
        &\int_{S}(1-\cZ(x)) Q_{\av}(\secf)+\cZ(x) Q_\hom(\secf)&\qquad&\text{for
        }u\in W^{2,2}_\iso(S),\\
        &\infty&&\text{else,}
      \end{aligned}\right.
  \end{equation*}
  where $\cZ$ denotes the indicator function of
  $Z_{\nabla u}$, see Definition~\ref{def:1} below.
  \medskip

  We shall consider boundary conditions of the following form: Let
  $L_{BC}\neq\emptyset$ denote a line segment of the form $L_{BC}=\{\,x_0+t
  N\,:\,t\in\R\,\}\cap S$ (for some $x_0\in\R^2$ and some
  unit vector
  $N\in\R^2$). We assume that
  \begin{equation}\label{ass:BC}\tag{BC}
    u=\varphi_{BC}\mbox{ and }\nabla u=\nabla\varphi_{BC}\qquad\mbox{ on }L_{BC},
  \end{equation}
  where $\varphi_{BC}:\R^2\to\R^3$ is a fixed rigid isometric immersion, i.~e.
  $\nabla\varphi_{BC}$ is constant and satisfies \eqref{ass:isom}. 
  \medskip

  We are now in position to state our main result.

  \begin{theorem}
    \label{T:1}
    Let $S\subset\R^2$ be a convex Lipschitz domain and let $Q$ satisfy (Q1) -- (Q3).
    \begin{enumerate}[(a)]
    \item Consider  $u^\e\in L^2(S,\R^3)$ with finite energy, i.~e.
      \begin{equation*}
        \limsup\limits_{\e\downarrow 0}\mathcal E^\e(u^\e)<\infty.
      \end{equation*}
      Then there exists $u\in W^{2,2}_\iso(S)$ such that $u^\e-\fint_S u^\e\to u$ in
      $L^2(S,\R^3)$ as $\e\downarrow 0$ (after possibly passing to subsequences).
    \item Let $u^\e$ converge to some $u$ in
      $L^2(S,\R^3)$ as $\e\downarrow 0$. Then
      \begin{equation*}
        \liminf\limits_{\e\downarrow 0}\mathcal E^\e(u^\e)\geq
        \mathcal E^\hom(u).
      \end{equation*}
    \item For every $u\in L^2(S,\R^3)$ there exists a sequence 
      $u^\e\in L^2(S,\R^3)$ that converges to $u$ and
      \begin{equation*}
        \lim\limits_{\e\downarrow 0}\mathcal E^\e(u^\e)=\mathcal E^\hom(u).
      \end{equation*}
      Moreover, if $u\in W^{2,2}_\iso(S)$ satisfies \eqref{ass:BC},
      then $u^\e$ can be chosen such that $u^\e\in W^{2,2}_\iso(S)$ satisfies the boundary
      condition \eqref{ass:BC} in addition.
    \end{enumerate}
  \end{theorem}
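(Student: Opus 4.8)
The plan is to establish the three ingredients of a $\Gamma$-convergence proof separately, the decisive new point being the two-scale analysis of second fundamental forms of $W^{2,2}$-isometric immersions.

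\emph{Compactness (a).} Suppose $\limsup_{\e\dto0}\mathcal E^\e(u^\e)<\infty$. Then $u^\e\in W^{2,2}_\iso(S)$ for small $\e$, and since $\secf_{[u^\e]}$ is symmetric, (Q3) gives $\alpha\int_S|\secf_{[u^\e]}|^2\le\mathcal E^\e(u^\e)\le C$. For an isometric immersion $\partial_i\partial_ju^\e$ is normal to the image surface with $\partial_i\partial_ju^\e\cdot\nu^\e=(\secf_{[u^\e]})_{ij}$, whence $|\nabla^2u^\e|=|\secf_{[u^\e]}|$, while $|\nabla u^\e|\equiv\sqrt2$ by \eqref{ass:isom}; thus $u^\e-\fint_Su^\e$ is bounded in $W^{2,2}(S,\R^3)$. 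Since $S$ is Lipschitz, $W^{2,2}(S)\hookrightarrow W^{1,2}(S)$ compactly, so a subsequence converges strongly in $W^{1,2}$, hence a.e., to some $u$; passing to the limit in $\partial_iu^\e\cdot\partial_ju^\e=\delta_{ij}$ shows $u\in W^{2,2}_\iso(S)$, and in particular $u^\e-\fint_Su^\e\to u$ in $L^2(S,\R^3)$.

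\emph{Lower bound (b).} I would use two-scale convergence. We may assume the $\liminf$ is finite and, along a subsequence realizing it, that $u^\e\in W^{2,2}_\iso(S)$ with bounded energy; by part (a), $u^\e\to u$ also in $W^{1,2}$ and $\secf_{[u^\e]}\wto\secf_{[u]}$ in $L^2$. Extract a two-scale limit $A\in L^2(S\times(0,1)^2,\R^{2\times2}_\sym)$ with $\secf_{[u^\e]}\wtto A$ and $\int_{(0,1)^2}A(\cdot,y)\,dy=\secf_{[u]}$. The central point, where the nonlinear differential constraint enters, is a structure result for $A$: for a.e.\ $x\in K_{\nabla u}$ one has $A(x,y)=\secf_{[u]}(x)$ for a.e.\ $y$, while for a.e.\ $x\in Z_{\nabla u}$, writing $\secf_{[u]}(x)=\mu(x)\,T(x)\otimes T(x)$ with $T(x)$ a unit vector orthogonal to the locally constant ruling direction, there is $\alpha_x\in W^{1,2}_{T(x)\text{-per}}(\R)$ with $A(x,y)=\mu(x)\bigl(1+\alpha_x'(T(x)\cdot y)\bigr)\,T(x)\otimes T(x)$. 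Granting this, lower semicontinuity of two-scale integral functionals with convex integrand gives $\liminf_{\e\dto0}\int_SQ(\tfrac x\e,\secf_{[u^\e]}(x))\,dx\ge\int_S\int_{(0,1)^2}Q(y,A(x,y))\,dy\,dx$. I then bound the right-hand side on $C_{\nabla u}$ (there $\secf_{[u]}=0$ and $Q\ge0$), on $K_{\nabla u}$ (the inner integral equals $\int_{(0,1)^2}Q(y,\secf_{[u]}(x))\,dy=Q_{\av}(\secf_{[u]}(x))$ by \eqref{def:av-formula}), and on $Z_{\nabla u}$ (the inner integral is $\mu(x)^2\int_{(0,1)^2}Q(y,(1+\alpha_x'(T(x)\cdot y))T(x)\otimes T(x))\,dy\ge Q_\hom(\secf_{[u]}(x))$ by \eqref{def:hom-formula}); summing the three contributions yields $\mathcal E^\hom(u)$.

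\emph{Recovery sequence (c).} If $u\notin W^{2,2}_\iso(S)$, take $u^\e\equiv u$, both sides being $+\infty$. For $u\in W^{2,2}_\iso(S)$, by an approximation argument for $W^{2,2}$-isometric immersions on the convex domain $S$ — replacing $u$ by maps $v$ whose domain decomposes into finitely many pieces on each of which $v$ is smooth and either affine, cylindrical with one fixed ruling direction, or conical, with $v\to u$ in $W^{2,2}$, $\mathcal E^\hom(v)\to\mathcal E^\hom(u)$, and \eqref{ass:BC} preserved if it holds for $u$ — and a diagonal argument, it suffices to build a recovery sequence for such $v$. On affine and conical pieces set $u^\e:=v$: since $\secf_{[v]}=\mu\,T\otimes T$ is continuous there, the Riemann--Lebesgue lemma gives $\int Q(\tfrac x\e,\secf_{[v]})\,dx\to\int\mu(x)^2\int_{(0,1)^2}Q(y,T(x)\otimes T(x))\,dy\,dx=\int Q_{\av}(\secf_{[v]})\,dx$. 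On a cylindrical piece, after rotating so the ruling direction is $e_1$ and $T=e_2$, one has $v(x)=x_1e_1+w(x_2)$ with $w'(x_2)=(0,\cos\theta(x_2),\sin\theta(x_2))$, $\secf_{[v]}=\theta'\,e_2\otimes e_2$, so $\mu=\theta'$. Let $\alpha$ be the minimizer in \eqref{def:hom-formula} for $T=e_2$ (it depends on $T$ and $Q$ only) and $\eta$ a cut-off equal to $1$ away from the ends of the piece; set $\theta_\e(x_2):=\int_0^{x_2}\mu(s)\bigl(1+\eta(s)\alpha'(s/\e)\bigr)\,ds$ and $u^\e(x):=x_1e_1+w_\e(x_2)$ with $w_\e'=(0,\cos\theta_\e,\sin\theta_\e)$. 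Then $u^\e$ is an exact $W^{2,2}$-isometric immersion on the piece with $\secf_{[u^\e]}=\mu(1+\eta\alpha'(\cdot/\e))\,e_2\otimes e_2$; integrating by parts shows $\theta_\e\to\theta$ uniformly, so $u^\e\to v$ uniformly; and (letting the cut-off region shrink with $\e$ so its contribution is $o(1)$) the Riemann--Lebesgue lemma gives $\int Q(\tfrac x\e,\secf_{[u^\e]})\,dx\to\int\mu(x_2)^2\int_{(0,1)^2}Q(y,(1+\alpha'(y_2))e_2\otimes e_2)\,dy\,dx=\int Q_\hom(\secf_{[v]})\,dx$. Near the interfaces and near $L_{BC}$ the cut-off leaves $u^\e=v$ up to an $O(\e)$ discrepancy that can be removed by a perturbation of vanishing energy, so the pieces glue in $W^{2,2}$ and \eqref{ass:BC} is kept.

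\emph{Main obstacle.} The hardest step will be the structure result for the two-scale limit $A$ in part (b): one must exploit the nonlinear isometry constraint to show that no microscopic oscillation survives at conical points, whereas at cylindrical points only a one-dimensional oscillation transverse to the (parallel) rulings is possible — exactly the combination of two-scale convergence with a nonlinear differential constraint highlighted in the abstract. A secondary technical difficulty is the approximation in the recovery step: since $Q_\hom\le Q_{\av}$, generally with strict inequality, $\mathcal E^\hom$ is not continuous along arbitrary $W^{2,2}$-convergent sequences, so the reduction to piecewise-smooth developable $v$ must be carried out while keeping track of the cylindrical set (and the boundary datum).
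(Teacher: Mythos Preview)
Your treatment of parts (a) and (b) matches the paper's approach; in particular you correctly isolate the structure result for the two-scale limit of $\secf^\e$ as the crux, which is the paper's Proposition~\ref{P:1}.

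In part (c), however, your reduction step has a genuine gap. You propose to approximate $u$ by isometries $v$ whose domain decomposes into finitely many pieces, each \emph{purely} affine, cylindrical (with one fixed ruling direction), or conical. Such a decomposition need not exist: as the paper notes in Remark~\ref{sec:sets-z_nabla-u}, the sets $Z_{\nabla u}$ and $K_{\nabla u}$ can be of fat Cantor type, so even after approximating by a smooth or finitely developable isometry, the cylindrical and conical parts can be intertwined in a way that cannot be separated into finitely many subdomains. Your one-dimensional cylinder construction and your ``$u^\e=v$ on conical pieces'' rule both presuppose this separation.

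The paper circumvents this as follows. It approximates $u$ by a finitely developable isometry and covers the non-flat part by finitely many \emph{patches}, each parametrised by a single line-of-curvature chart $\Phi(t,s)=\Gamma(t)+sN(\Gamma(t))$ (Definition~\ref{D:patch}, Theorem~\ref{thm:horn2}). A patch may contain both cylindrical and conical points. On a patch one then perturbs only the normal curvature, replacing $\kappa_n(t)$ by $(1+\theta^\e)\kappa_n(t)$ with
\[
\theta^\e(x)=\sum_{j=1}^J\chi_{\nabla u,T_j}(x)\,\alpha_j'\bigl(\tfrac{T_j\cdot x}{\e}\bigr),
\]
which vanishes on the conical subset and equals your cylindrical corrector on each cylindrical subset. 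Because $N$, and hence each $\chi_{\nabla u,T_j}$, is constant along rulings, $\theta^\e\circ\Phi$ depends only on $t$, so the frame ODE still yields an exact isometric immersion (Step~3 in the proof of Lemma~\ref{L:RS:2}). This is the missing idea: the corrector is applied \emph{selectively via indicator functions inside a single chart}, not on a pre-separated cylindrical subdomain.

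A secondary point: rather than cutoffs producing an $O(\e)$ mismatch at interfaces, the paper arranges that each local construction is affine on the bounding rulings (property~\eqref{eq:A}), so adjacent patches are glued by exact rigid motions; this makes the treatment of~\eqref{ass:BC} clean.
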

  \smallskip

  The limit $\mathcal E^\hom$ is not a standard Kirchhoff plate model. In
  particular, it is not possible to recast $\mathcal E^\hom$ into the
  form of
  \eqref{eq:energy-const}.
  Still, it is a generalized Kirchhoff plate model in the sense that
  the energy locally is quadratic in the second fundamental form.

  \begin{remark}
    \begin{itemize}
\item[(1)]
The result also holds true for non-convex Lipschitz domains $S$
    with the property that there exists some $\Sigma\subset \partial
    S$ with $\mathcal H^1(\Sigma)=0$, and the outer normal to $S$
    exists and is continuous on $\partial S\setminus\Sigma$. We limit
    ourselves to the convex case here for the sake of brevity. Our
    main point is the proof of part (b) of Theorem \ref{T:1}, which is
    completely independent of whether $S$ is convex or not. The
    construction of a recovery sequence in part (c) however becomes
    somewhat more involved for non-convex domains. It is nevertheless
    possible by appealing to the results of
    \cite{Hornung-11a} and \cite{Hornung-11b}.
\item[(2)]
We have chosen to  set the boundary conditions (\ref{ass:BC}) on a line segment in
the interior of the domain. We have done so for the sake the
simplicity. If the boundary of $S$ contains a flat part, we could also set the
boundary conditions there. It is possible to treat this case by enlarging the
domain and extending the isometric immersion affinely -- in this way, the
boundary conditions on the flat part of the initial domain become boundary
conditions on a line segment lying in the interior of the enlarged domain.
\end{itemize}
  \end{remark}
  \medskip

  Let us comment on the proof of Theorem~\ref{T:1}. Since $\mathcal
  E^\e$ is non-convex and singular with non-convex effective domain the derivation of the $\Gamma$-limit is subtle and
  standard tools, e.~g. compactness and representation results for $\Gamma$-limits that rely on integral representations, are not applicable. To overcome these difficulties we take advantage of two observations:
First, as a functional of the second fundamental form the mapping
\begin{equation}\label{eq:energy-sf}
  \secf\mapsto \int_SQ(\tfrac{x}{\e},\secf)\,dx
\end{equation}
is convex and quadratic, so that we can pass to the
limit $\e\downarrow 0$ in \eqref{eq:energy-sf} by classical homogenization
techniques, in particular two-scale convergence. Secondly, the
nonlinear isometry constraint yields a strong rigidity and allows the
second fundamental form to oscillate only in a very restricted way.
\medskip

This second observation is the heart of the matter and requires to
describe the structure of two-scale limits of vector fields under a
\textit{nonlinear} differential constraint, cf. Remark~\ref{R:MONGE}
for more details. While the interplay of two-scale convergence and
\textit{linear} differential constraints is reasonably well
understood, e.g. see \cite{Fonseca-Kroemer-10}, in the nonlinear case
no systematic approach seems to be available. In fact, to our
knowledge our result is the first attempt in that direction
in the nonlinear case. Since the main focus of this paper is the
derivation of the $\Gamma$-limit to $\mathcal E^\e$, we content ourselves with a partial identification of the two-scale limit which is yet strong enough to treat Theorem~\ref{T:1}. To motivate this in more detail consider a sequence $u^\e$ that weakly
converges in $W^{2,2}_\iso(S)$ to some limit $u$. Let $\secf^\e$ denote the second
fundamental form associated with $u^\e$. Since $\secf^\e$ is bounded
in $L^2(S,\R^{2\times 2})$, we may pass to a weakly two-scale convergent
sequence. Since $Q(y,F)$ is convex in $F$, standard results from
two-scale convergence, cf. Lemma~\ref{L:twoscale-lsc}, yield the lower bound
\begin{equation*}
  \liminf\limits_{\e\downarrow
    0}\int_SQ(\frac{x}{\e},\secf^\e(x))\,dx\geq\inf_{H(x,y)}\int_{S\times
    (0,1)^2}Q(y,H(x,y))\,dydx,
\end{equation*}
where the infimum is taken over all weak two-scale limits $H(x,y)$ of arbitrary
subsequences of $\secf^\e$. Seeking for a lower bound that only
depends on the limit $u$, we need to identify the class of limits
$H(x,y)$ that might emerge as weak two-scale limits of $\secf^\e$.
This is done in Section~\ref{S:3}. As we shall see in Proposition~\ref{P:1} only certain oscillations on
scale $\e$ are compatible with the nonlinear isometry constraint
\eqref{ass:isom}. Loosely speaking, we observe that on cylindrical
regions of the limiting plate $u(S)$, only oscillations on scale $\e$ parallel to the
line of curvature are possible, while on conical regions all
oscillations on scale $\e$ are suppressed.
\medskip

Theorem~\ref{T:1} is a homogenization result for a singular integral
functional whose effective domain $\{\mathcal E^\e<\infty\}$ is
\textit{non-convex}. Questions regarding homogenization and relaxation
of singular integral functionals related to hyperelasticity
have been actively studied in the last years, e.g. \cite{Hafsa-Mandallena-11} and the
references therein. Typically, these interesting works study integral functionals of
the form $u\mapsto \int W(\tfrac{x}{\e},\nabla u(x))\,dx$ where $u$
denotes a deformation and $W$ satisfies non-standard growth conditions
allowing for attainment of the value $+\infty$. Compared to that, in our situation the singular
behavior is of different nature. It is due to the non-convex differential
constraint \eqref{ass:isom} and, thus, requires a completely different
approach.
\medskip

The paper is organized as follows: In
Subsection~\ref{sec:homog-effects} we discuss the homogenized quadratic
form $Q_{\hom}$ in more detail. In Subsection~\ref{S:rel3d} we put our
limiting model $\mathcal E^\hom$ in relation with models derived from
three-dimensional elasticity via simultaneous dimension reduction and
homogenization. In Section~\ref{S:2} we recall some basic
preliminaries from geometry and two-scale convergence.
Section~\ref{S:3} is the core of the paper. There we analyze the
structure of oscillations of the second fundamental form. Finally, in
the last section we give the proof of Theorem~\ref{T:1}.

\subsection{Homogenization formula and homogenization
  effects}
\label{sec:homog-effects}
Theorem \ref{T:1} states in particular that locally,  there are no homogenization effects
if the deformation $u$ is not a cylindrical isometric immersion. On the cylindrical part non-trivial homogenization effects occur and the effective behavior is captured by $Q_{\ho}$ which
is defined via \eqref{def:hom-formula}. The formula involves the space
$W^{1,2}_{T\text{-per}}(\R)$ which is defined as follows: For any unit
vector $T\in\R^2$ we set 
\begin{equation*}
  C^1_{T\text{-per}}(\R):=\{\,\alpha\in C(\R)\,:\,\alpha(s+T\cdot
  k)=\alpha(s)\text{ for all }s\in\R,\,k\in\Z^2\,\}\,,
\end{equation*}
and define $W^{1,2}_{T\text{-per}}(\R)$ as the closure of $C^1_{T\text{-per}(\R)}$ w.~r.~t. the norm 
\begin{equation*}
  \|\alpha\|^2_{W^{1,2}_{T\text{-per}}(\R)}:=\int_{(0,\max\{T\cdot
  e_1,T\cdot e_2\})}\alpha^2(s)+|\alpha'(s)|^2\,ds.
\end{equation*}
The space $C^1_{T\text{-per}}(\R)$ and thus $W^{1,2}_{T\text{-per}}(\R)$ can
be characterized as follows: Consider
\begin{equation*}
  \mathcal S^1_*:=\{\,T\in \mathcal S^1\,:\,T\in r\Z^2\text{ for some }r\in\R\,\}
\end{equation*}
We will call $T\in\mathcal S^1_*$ a ``rational'' direction. We define

\begin{equation}\label{def:r}
  r(T):=\left\{\begin{aligned}
      &\sup\{\,r>0\,:\,T\in r\Z^d\,\}&&\text{if }T\in\mathcal S^1_*,\\
      & 0&&\text{otherwise}.    
  \end{aligned}\right.
\end{equation}
If the ratio of the components of $T$, i.~e. $T\cdot e_1$ and $T\cdot
e_2$, is irrational, then  $r(T)=0$ and $C^1_{T\text{-per}}(\R)$ only
contains the constant functions. Otherwise $C^1_{T\text{-per}}(\R)$
consists precisely of those functions in $C^1(\R)$ that are periodic with
period $r(T)$.
\medskip

Next, we obtain a more explicit formula for $Q_{\ho}(T\otimes T)$. If $r(T)=0$, then we have
$Q_{\ho}(T\otimes T)=Q_{\av}(T\otimes T)$. Otherwise, consider for
$t\in[0,r(T))$ the finite union of line segments
\begin{equation*}
  L_t:=\left\{y\in [0,1)^2:T\cdot y-t\in r(T)\Z\right\},
\end{equation*}
and define $q_{\av,T}:[0,r(T))\to\R$ by
\begin{equation}
  q_{\av,T}(t)=r(T)\int_{L_t}Q(y,T\otimes T)\d\mathcal H^1(y)\label{eq:31},
\end{equation}
which in fact is an average since $\mathcal H^1(L_t)=r(T)^{-1}$ for
all $t\in[0,r(T))$. With this notation, we have by Fubini
\begin{equation*}
  Q_{\ho}(T\otimes T)=\min\left\{\fint_0^{r(T)}q_{\av,T}(t)(1+\alpha'(t))^2\d
    t:\alpha\in W^{1,2}_{T\text{-per}}(\R)\right\}\,.
\end{equation*}
The solution of this one-dimensional minimization problem which is obtained by integrating the
associated Euler-Lagrange equation is well known. A minimizer
$\alpha_*$ (whose dependency on $T$ we suppress in the notation) is given
by
\begin{equation}\label{eq:hom-formula-minimizer}
  \alpha_*(t):=\frac{1}{\fint_0^{r(T)}\frac{ds}{q_{\av,_T}(s)}}\int_0^t\frac{ds}{q_{\av,T}(s)}
\end{equation}
and we obtain
\begin{equation}
  \label{eq:hom-formula-3}
  Q_{\ho}(T\otimes T)=\fint_0^{r(T)}q_{\av,T}(t)(1+\alpha'_*(t))^2 dt=\frac{1}{\fint_0^{r(T)}\frac{dt}{q_{\av,T}(t)}}\,.
\end{equation}
Thus we have averaging in the direction perpendicular to $T$ (eq.~\eqref{eq:31}) and
homogenization in the direction of $T$ (eq.~\eqref{eq:hom-formula-3}).  The averaging takes place over a  set of $\mathcal
H^1$-measure $r(T)^{-1}$, and the homogenization takes place over a set of $\mathcal
H^1$-measure $r(T)$. The better $T$
agrees with the periodic microstructure of the material (which by
assumption (Q1) is aligned with the coordinate axes), the smaller
is  $r(T)$. Hence, the better $T$ is chosen to match with the
coordinate axes, the more room there is for homogenization effects to make the
material softer with respect to bending in this direction.
\subsection{Relation to  3d nonlinear elasticity}\label{S:rel3d}
As mentioned in the introduction Kirchhoff's plate model can be
rigorously derived from nonlinear 3d elasticity. In the following we
compare the limit $\mathcal E^{\hom}$ from Theorem~\ref{T:1} to effective models
obtained from 3d elasticity via simultaneous dimension reduction and
homogenization.   To that end we consider the  energy functional
\begin{equation*}
  \mathcal  E^{\e,h}(u):=
  \frac{1}{h^2}\fint_{\Omega_h}W(\tfrac{x_1}{\e},\tfrac{x_2}{\e},\nabla
    u(x))\,dx,
\end{equation*}
where $\Omega_h:=S\times(-\frac{h}{2},\frac{h}{2})$ models the
reference domain of a thin, three-dimensional plate with
thickness $h>0$, and $W: \R^2\times\R^{3\times
  3}\to[0,\infty]$ denotes a stored energy function of an elastic
composite material. We assume that  $W(y,F)$ is $[0,1)^2$-periodic in $y$, and frame-indifferent,
non-degenerate, and $C^2$ in a neighborhood of the identity in $F$ (see
\cite{Friesecke-James-Mueller-02} for details).


\medskip

The energy $\mathcal E^{\e,h}$ models a hyperelastic material whose stress free reference state is the thin domain $\Omega_h$.
The described material is a composite that periodically  varies in
in-plane directions. Note that $\mathcal E^{\e,h}$ admits two small length scales: the
thickness $h$ and the material fine-scale $\e$. The limit $h\downarrow 0$
corresponds to dimension reduction, while $\e\downarrow 0$ amounts to
homogenization. In \cite{Friesecke-James-Mueller-02} it is shown that
$\mathcal E^{\e,h}$ $\Gamma$-converges for $h\downarrow 0$ (and fixed $\e>0$) to the energy $\mathcal E^\e$, cf. \eqref{eq:-1},
where $Q$ is obtained from the quadratic form $G\mapsto\frac{\partial^2W}{\partial F^2}(y,I)(G,G)$ by a relaxation
formula, and (by the assumptions on $W$) automatically satisfies
Assumption~\ref{ass}. Hence, in combination with Theorem~\ref{T:1} we deduce that $\mathcal
E^{\hom}$ is the double-limit of the 3d-energy $\mathcal E^{\e,h}$
that correspond to ``homogenization after dimension reduction''; i.~e.
\begin{equation*}
\mathcal E^{\hom}=\Gamma\text{-}\lim\limits_{\e\downarrow
  0}\;\Gamma\text{-}\lim\limits_{h\downarrow 0}\mathcal E^{\e,h}.
\end{equation*}
We therefore expect $\mathcal E^{\hom}$ to be a good model for the
three-dimensional plate in 
situations where $h\ll\e\ll 1$. 
\medskip

An alternative way to obtain an effective model from $\mathcal
E^{\e,h}$ is to simultaneously pass to the limit $(\e,h)\to (0,0)$. 
This has been studied in the case of rods, plates and shells, see
\cite{HorNeuVel-13, Hornung-Velcic-2012, Neukamm-10, Neukamm-12, Neukamm-Velcic-2013, Velcic-2012}. In particular, in \cite{Neukamm-12} the simpler
situation of elastic rods has been analyzed in detail, i.~e. when
$\Omega_h$ is replaced by a thin
rod-like domain of the form $(0,1)\times hB$ where $B$ denotes the
two-dimensional cross-section of the rod. As shown in \cite{Neukamm-12} the obtained
$\Gamma$-limit depends on the relative scaling between $\e$ and $h$. More precisely,
under the assumption that the ratio $\frac{h}{\e}$ converges to a
prescribed scaling factor $\gamma\in[0,+\infty]$, it is shown that the initial
energy $\Gamma$-converges to a bending torsion model for inextensible
rods, whose effective energy density continuously depends on the scaling factor $\gamma$.
Moreover, it is shown that the model obtained in the case $\gamma=0$
(which corresponds to simultaneous dimension reduction and
homogenization in the regime $h\ll \e\ll 1$) is equivalent to the
model obtained by the sequential limit ``$\e\downarrow 0$ after
$h\downarrow0$''. 
\medskip

For plates, as considered here, this suggests the following: For a given scaling
factor $\gamma>0$ consider the limit $\mathcal
E^\gamma=\Gamma\text{-}\lim_{h\downarrow 0}\mathcal E^{\e(h),h}$
where we assume that $\frac{h}{\e(h)}\to \gamma$ as $h\downarrow 0$. This limit corresponds to a simultaneous
dimension reduction and homogenization of $\mathcal E^{\e,h}$ in the
case when the fine-scale $\e$ and $h$ do not separate. The analysis for rods  described above suggests that $\mathcal E^{\hom}$
can be recovered from $\mathcal E^\gamma$ in the limit
$\gamma\downarrow 0$. Surprisingly  this is \textbf{not} the
case for plates: As shown most recently by Hornung and Vel\v{c}i\'c
and the first author in \cite[Theorem~2.4]{HorNeuVel-13}, for
$\gamma\in(0,\infty)$ the limit $\mathcal E^\gamma$ takes the form of
the plate model \eqref{eq:-1} with $Q$ replaced by the relaxed and
homogenized quadratic form $Q_\gamma$ that depends on the
scaling factor $\gamma$. A close look at the relaxation formula
defining $Q_\gamma$ shows that typically $\limsup_{\gamma\downarrow 0}Q_\gamma<Q_{\av}$.
This implies that on the level of the associated energies $\mathcal E^\gamma$ and $\mathcal E^\hom$ we typically have $\limsup_{\gamma\downarrow 0}\mathcal E^\gamma(u)<\mathcal
E^\hom(u)$ for \textit{conical} deformations $u\in
W^{2,2}_\iso(S)$, in contrast to the case of rods, where $\lim_{\gamma\downarrow 0}\mathcal E^\gamma=\mathcal E^0=\mathcal
E^\hom$.


\section*{Acknowledgements}
The authors would like to thank an anonymous referee for pointing out a mistake
in an earlier version of this manuscript, cf.~Remark
\ref{sec:sets-z_nabla-u}. This work was initiated while the first author was employed at the Max
  Planck Institute for Mathematics in the Sciences, Leipzig, Germany. The second author gratefully acknowledges the hospitality of the  Max
  Planck Institute for Mathematics in the Sciences, Leipzig, Germany.
\medskip

\section{Notation and  preliminaries}\label{S:2}
Throughout this article we use the following notation:
\begin{itemize}
\item $e_1,e_2$ denotes the standard Euclidean basis of
  $\R^2$;
\item we write $a\cdot b$ for the inner product in $\R^2$, $|\cdot|$
  for the induced Euclidean norm, and denote
  the coefficients of $a\in\R^2$ by $a_i:=a\cdot e_i$, $i=1,2$;
\item for $a=(a_1,a_2)\in\R^2$ we set $a^\perp:=(-a_2,a_1)$;
\item $\mathcal S^1:=\{\,e\in\R^2\,:\,|e|=1\}$, and $\mathcal
  S^1_*:=\{\,T\in \mathcal S^1\,:\,T\in r\Z^2\text{ for some }r\in\R\,\}$
\item for $a=(a_1,a_2),b=(b_1,b_2)\in\R^2$ we denote by $a\otimes b$
  the unique $2\times2$ matrix characterized by $e_i\cdot
  (a\otimes b)e_j=a_ib_j$;
\item we denote the entries of $A\in\R^{2\times 2}$ by $A_{ij}$ so
  that $A=\sum_{i,j=1}^2A_{ij}(e_i\otimes e_j)$, and we write $A:B:=\sum_{i,j=1}^dA_{ij}B_{ij}$ for the inner product in
  $\R^{2\times 2}$;
\item $A^t$ denotes the transposed of $A\in\R^{2\times 2}$;
\item $B(x,R)$ denotes the open ball in $\R^2$ with center $x$ and
  radius $R$;
\item $a\times b$ denotes the vector product in $\R^3$.
\end{itemize}
\subsection{Properties of $W^{2,2}$-isometric
  immersions}\label{sec:geometry-basic}
We denote by
\begin{equation*}
  W^{2,2}_{\iso}(S):=\{\,u\in W^{2,2}(S,\R^3)\,:\,u\text{ satisfies
  }\eqref{ass:isom}\text{ a.e. in }S\,\}
\end{equation*}
the set of Sobolev isometries. The second fundamental form associated with $u\in
W^{2,2}_\iso(S)$ is given by the matrix field
$\secf:S\to\R^{2\times 2}$ with entries
 \begin{equation}\label{eq:def:secf}
   \secf_{ij}:=-\partial_in\cdot\partial_j u,
 \end{equation}
where $n:=\partial_1 u\times\partial_2u$ denotes the normal
field to the surface $u(S)$.
\medskip

>From classical geometry it is well known that a smooth surface in $\R^3$
that is isometric to a flat surface is developable --- locally it is
either flat, a cylinder or a cone. As shown by Kirchheim
\cite{Kirchheim-01} (see also \cite{Pakzad-04}, \cite{Hornung-11a} and
\cite{Hornung-11b}) $W^{2,2}$-isometries share this property. In the following
we make this precise. Throughout the paper we use the notation
$[x;N]:=\{\,x+sN\,:\,s\in\R\,\}$ for the line through $x$
parallel to $N$, and $[x;N]_S$ for the connected component of $[x;N]\cap
S$ that contains $x$.
We start our survey with a regularity result on the gradient of isometries:

\begin{lemma}[see  {\cite[Proposition 5]{Mueller-Pakzad-05}}]
  \label{L:cont}
  Let $S\subset\R^2$ be a Lipschitz domain. Then $\nabla u$ is continuous for all $u\in W^{2,2}_{\iso}(S)$.
\end{lemma}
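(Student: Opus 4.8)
The plan is to reduce the statement to a known rigidity estimate for $W^{2,2}$-isometric immersions and a careful covering argument, following the strategy of M\"uller and Pakzad. First I would recall the basic point: for $u\in W^{2,2}_\iso(S)$ the Gauss curvature of the induced metric vanishes, which forces the second fundamental form $\secf$ to be a rank-one, symmetric $2\times 2$ matrix field a.e.\ (indeed $\det\secf=0$ a.e.\ by the Gauss equation, and $\secf$ is symmetric since the metric is Euclidean, so $\secf(x)=\kappa(x)\,N(x)^\perp\otimes N(x)^\perp$ for some measurable unit vector field $N$ and scalar $\kappa\in L^2$). The crucial consequence is that $\nabla u$ is constant along the line segments $[x;N(x)]_S$; this is the developability statement of Kirchheim and Pakzad that the excerpt has already cited (\cite{Kirchheim-01}, \cite{Pakzad-04}).

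Next I would use this line structure to prove local Lipschitz (hence continuous) behavior of $\nabla u$. Fix an interior point $x_0$ and a ball $B(x_0,2R)\subset S$. On this ball one has a foliation by the leading lines of $\nabla u$, and on each line $\nabla u$ is constant. The main estimate — this is the heart of the argument in \cite{Mueller-Pakzad-05} — is that the map $x\mapsto N(x)$ is itself Lipschitz on a slightly smaller ball, with constant controlled by $\|\secf\|_{L^2(B(x_0,2R))}$: two nearby lines in the foliation cannot cross inside $S$, so their directions can differ by at most an amount proportional to the distance between the points divided by the minimal ``length'' of the leaves, and a Fubini/coarea argument over the family of leaves converts the $L^2$ bound on curvature into a pointwise modulus of continuity for $N$. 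Once $N$ is (locally) Lipschitz, $\nabla u$ — being constant along leaves whose directions vary Lipschitz-continuously and which span a neighborhood — is continuous: given $x,y$ close, connect $x$ to a point $z$ on the leaf through $y$ by moving along the leaf through $x$, so that $\nabla u(x)=\nabla u(z)$ and $|\nabla u(z)-\nabla u(y)|$ is controlled by $|z-y|$ via the $W^{2,2}$-bound along the single leaf through $y$, together with the Lipschitz control on how leaves tilt.

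Finally, I would patch the local statements together: interior continuity follows by covering $S$ with such balls, and continuity up to $\partial S$ for a Lipschitz domain follows by the standard extension/flattening argument near the boundary (or simply by invoking the cited Proposition~5 of \cite{Mueller-Pakzad-05} verbatim, since the lemma is stated as a quotation). The main obstacle is the quantitative non-crossing estimate for the leaves — one must rule out that the foliation degenerates (leaves becoming arbitrarily short, or many leaves accumulating), and this is exactly where convexity-type arguments on $S$ and the $L^2$-bound on $\secf$ enter. Since the statement is cited directly from \cite[Proposition~5]{Mueller-Pakzad-05}, in the write-up I would either reproduce this covering-plus-rigidity argument in condensed form or, more economically, simply refer the reader to that reference and to \cite{Pakzad-04}, \cite{Hornung-11a}, \cite{Hornung-11b} for the full details.
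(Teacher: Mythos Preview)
The paper does not prove this lemma at all; it is stated as a direct citation of \cite[Proposition~5]{Mueller-Pakzad-05} and no argument is given. So your fallback option --- ``simply refer the reader to that reference'' --- is precisely what the paper does, and in that narrow sense your proposal matches.

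However, the substantive sketch you give before that fallback has a genuine circularity problem. You propose to deduce continuity of $\nabla u$ from the developability structure: the existence of a locally Lipschitz field $N$ of asymptotic directions and the constancy of $\nabla u$ along the leaves $[x;N(x)]_S$. But in the paper (and in \cite{Pakzad-04}, \cite{Mueller-Pakzad-05}) the logical order is the reverse: Lemma~\ref{L:cont} (continuity of $\nabla u$) is stated \emph{before} Proposition~\ref{P:L} (developability), and in the original references continuity is an \emph{input} to the developability argument, not a consequence of it. Without already knowing that $\nabla u$ is continuous, the field $N$ is only defined a.e., the ``leaves'' are not well-defined for every point, and the non-crossing/Lipschitz estimate you describe cannot be set up. So your main line of argument assumes what it is meant to prove.

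The actual route in \cite{Pakzad-04} and \cite{Mueller-Pakzad-05} is different: one reduces to a scalar problem by observing that the second fundamental form is (locally) the Hessian of a scalar function $\varphi\in W^{2,2}$ satisfying the degenerate Monge--Amp\`ere equation $\det\nabla^2\varphi=0$ a.e.\ (cf.\ Remark~\ref{R:MONGE} in the paper), and then proves directly that such $\varphi$ are $C^1$. That argument does not use the foliation; rather, the foliation is established afterwards, using the $C^1$ regularity. If you want to include a sketch rather than a bare citation, it should follow this scalar Monge--Amp\`ere route.
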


In the following let  $S$ be a convex Lipschitz
  domain and $u\in W^{2,2}_{\iso}(S)$.
We shall introduce some objects to describe the geometry of $u(S)$.
We say $x\in S$ is a \textit{flat point of $\nabla u$}, if $\nabla u$
is constant in some neighborhood of $x$ and introduce the (open) set
\begin{equation*}
  C_{\nabla u}:=\{\,x\in S\,:\,x\text{ is  a flat point of $\nabla u$}\,\}.
\end{equation*}
For our purpose it is convenient to describe the geometry of the
\textit{non-flat} part $S\setminus C_{\nabla u}$ by means of
\textit{asymptotic lines}. We say a unit vector $N\in\R^2$ is called an  \textit{asymptotic direction (for $\nabla u$) at
$x\in S$} if 
\begin{equation}\label{eq:def:asym}
  \exists s_0>0\text{ such that }\nabla u(x)=\nabla
  u(x+sN)\text{ for all $s\in(-s_0,s_0)$}.
\end{equation}
When $u$ is a smooth isometry, then it is known from classical
geometry that at every
non-flat point $x$ there exists an asymptotic direction
$N(x)$ that is unique up to a sign. In fact, we know more:  There exists a mapping
$N\,:\,S\setminus C_{\nabla u}\to \mathcal S^1:=\{\,N\in\R^2\,:\,|N|=1\,\}$ such that for all $x,y\in S\setminus
C_{\nabla u}$
\begin{subequations}
  \begin{align}
    \label{P:L:1}
    &\nabla u\text{ is constant on }[x;N(x)]_S,\\
    \label{P:L:2}
    &[x;N(x)]_S\cap[y;N(x)]_S\neq\emptyset\;\Longrightarrow\;
    [x;N(x)]=[y;N(y)].
  \end{align}
\end{subequations}
This observation extends to $W^{2,2}$-isometries:
\begin{proposition}[{\cite{Pakzad-04}}]
  \label{P:L}
  Let $u\in W^{2,2}_{\iso}(S,\R^3)$. Then there exists a locally Lipschitz
  continuous vector field 
  $N:S\setminus C_{\nabla u}\to\mathcal S^1$ such that \eqref{P:L:1}
  and \eqref{P:L:2} is true for all $x,y\in S\setminus C_{\nabla u}$.
  Furthermore, the field $S\setminus C_{\nabla u}\ni x\mapsto
  N(x)\otimes N(x)$ is unique. 
\end{proposition}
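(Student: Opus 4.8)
The plan is to reduce everything to the local structure of $W^{2,2}$-isometries, which is by now classical (Kirchheim, Pakzad, Hornung). First I would recall the key rigidity fact: for $u\in W^{2,2}_{\iso}(S)$ the image $\nabla u(S)$ is contained in a one-dimensional set (more precisely, $\nabla u$ takes values in a curve on the manifold of orthonormal $2$-frames), and consequently $\nabla u$ cannot have a genuinely two-dimensional ``wrinkled'' structure. The starting point is Lemma~\ref{L:cont}, so $\nabla u$ is continuous, and $C_{\nabla u}$ is open by definition. On $S\setminus C_{\nabla u}$ the central claim is: through every point $x$ there passes a line segment $[x;N(x)]_S$ on which $\nabla u$ is constant, and two such segments, if they meet inside $S$, must coincide. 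This is exactly the content of \eqref{P:L:1}--\eqref{P:L:2}; it is proved in \cite{Pakzad-04} (and also \cite{Kirchheim-01}, \cite{Hornung-11a}, \cite{Hornung-11b}). So the bulk of the proposition is a citation; the part that needs a small argument is \emph{well-definedness and regularity} of the map $x\mapsto N(x)$ and uniqueness of $N(x)\otimes N(x)$.

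For \textbf{well-definedness}: fix $x\in S\setminus C_{\nabla u}$. First one shows an asymptotic direction exists at $x$: since $x$ is not a flat point, $\nabla u$ is non-constant on every neighborhood, yet by the rigidity/developability result the level set $\{\nabla u=\nabla u(x)\}$ must contain a line segment through $x$ (this is the non-trivial geometric input, taken from \cite{Pakzad-04}). Next, \emph{uniqueness up to sign}: if there were two non-parallel asymptotic directions at $x$, then $\nabla u$ would be constant on a two-dimensional neighborhood of $x$ (spanned by the two segments, using \eqref{P:L:2} to propagate constancy), contradicting $x\notin C_{\nabla u}$. Hence $N(x)$ is determined up to sign, so $N(x)\otimes N(x)$ is well-defined; this already gives the last sentence of the proposition. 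To fix an actual vector field $N$ (not just $N\otimes N$) one makes a local choice of sign — possible since $x\mapsto N(x)\otimes N(x)$ will be shown continuous and $\mathcal S^1$ double-covers $\mathbb{RP}^1$, so a continuous lift exists locally.

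For \textbf{local Lipschitz continuity}: this is where the real work is, and it is the step I would expect to be the main obstacle if one were proving it from scratch rather than citing. The idea is that the non-crossing property \eqref{P:L:2} forces the segments through nearby points to be nearly parallel: if $x,y$ are close and $N(x),N(y)$ pointed in very different directions, the segments $[x;N(x)]_S$ and $[y;N(y)]_S$ would have to cross somewhere inside the (convex) domain $S$ unless they terminate quickly on $\partial S$, and near an interior point they cannot terminate quickly. Quantifying this — using convexity of $S$ to guarantee the segments are long, and a compactness/continuity argument for $\nabla u$ — yields a bound of the form $|N(x)-N(y)|\le C(K)\,|x-y|$ on compact subsets $K\Subset S\setminus C_{\nabla u}$, with $C(K)$ blowing up as $K$ approaches $\partial(S\setminus C_{\nabla u})$; hence only \emph{local} Lipschitz continuity. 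The detailed estimate is carried out in \cite{Pakzad-04}; I would simply invoke it. Finally, continuity of $x\mapsto N(x)\otimes N(x)$ follows from the local Lipschitz bound on $N$ (the sign ambiguity disappears upon tensoring), and its uniqueness was already established above from the uniqueness of $N(x)$ up to sign. This completes the proposal.
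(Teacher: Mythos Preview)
Your proposal is correct and matches the paper's treatment: the paper does not prove Proposition~\ref{P:L} at all but simply records it as a result of Pakzad \cite{Pakzad-04} (with the $C^2$ case attributed to \cite{Hartmann-Nierenberg-1959} and the underlying ideas to \cite{Kirchheim-01}). Your sketch of the argument---existence of an asymptotic direction from developability, uniqueness up to sign from the non-crossing property, and the local Lipschitz bound from the geometry of non-intersecting line segments---is a faithful outline of what is actually done in \cite{Pakzad-04}, so there is nothing to correct.
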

For isometries of class $C^2$, Proposition~\ref{P:L} is contained in the more general result \cite{Hartmann-Nierenberg-1959}. In the form above,
the proposition has been proven in \cite{Pakzad-04}, using ideas from
\cite{Kirchheim-01}. 

On $S\setminus C_{\nabla u}$ the second fundamental form $\secf$ is
proportional to $N^\perp\otimes N^\perp$, which has the geometric
meaning that $T:=-N^\perp$ is the principal direction along which
$u(S)$ is curved. This elementary observation is made
precise in the following lemma which can
be found in \cite{Friesecke-James-Mueller-06} and \cite{Hornung-11a}:
\begin{lemma}
  \label{L:0}
  Let $S\subset\R^2$ be bounded and $u\in W^{2,2}_\iso(S)$, then
  almost everywhere on $S$
  \begin{eqnarray}\label{L:1:1}
    \partial_i\partial_ju\cdot n&=&\secf_{ij},\\
    \label{L:1:3a}
    \partial_2\secf_{11}&=&\partial_1\secf_{12},\\
    \label{L:1:3b}
    \partial_2\secf_{21}&=&\partial_1\secf_{22},
  \end{eqnarray}
  and there exists $T:S\to\mathcal S^1$ with $T(x)=-N(x)^\bot$ for $x\in
  S\setminus C_{\nabla u}$ and $\mu\in L^2(S)$ such that
  \begin{equation}\label{L:1:2}
    \secf=\mu T\otimes T\qquad\text{ a.e.~on }S.
  \end{equation}
\end{lemma}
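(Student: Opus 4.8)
The plan is to establish the three claims of Lemma~\ref{L:0} by exploiting the smoothing available for $W^{2,2}$-isometries together with the algebraic structure of the isometry constraint \eqref{ass:isom}.

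\textbf{Step 1: The identity $\partial_i\partial_j u\cdot n = \secf_{ij}$.} First I would differentiate the isometry relation $\partial_i u\cdot\partial_j u=\delta_{ij}$. This is only meaningful in a weak/distributional sense since $u\in W^{2,2}$, so I would work with a mollified sequence $u_\delta\to u$ (or invoke the fact that $\nabla u\in W^{1,2}\cap C^0$ by Lemma~\ref{L:cont}) and pass to the limit. Differentiating $\partial_i u\cdot\partial_j u=\delta_{ij}$ with respect to $x_k$ gives $\partial_k\partial_i u\cdot\partial_j u+\partial_i u\cdot\partial_k\partial_j u=0$; a standard cyclic permutation argument in the indices $i,j,k$ then forces $\partial_k\partial_i u\cdot\partial_j u=0$ for all $i,j,k$, i.e.\ every second derivative $\partial_i\partial_j u$ is orthogonal to the tangent plane $\mathrm{span}\{\partial_1 u,\partial_2 u\}$ and hence parallel to $n=\partial_1 u\times\partial_2 u$, which is a unit vector by the isometry constraint. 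Therefore $\partial_i\partial_j u=(\partial_i\partial_j u\cdot n)\,n$. It remains to check $\partial_i\partial_j u\cdot n=\secf_{ij}$: from the definition \eqref{eq:def:secf}, $\secf_{ij}=-\partial_i n\cdot\partial_j u$, and differentiating the orthogonality $n\cdot\partial_j u=0$ gives $\partial_i n\cdot\partial_j u+n\cdot\partial_i\partial_j u=0$, which yields exactly $\secf_{ij}=n\cdot\partial_i\partial_j u$. All these manipulations are at the level of $L^1_{\loc}$ (or after mollification), so the only care needed is the justification of the product rule, which is routine given $\nabla u\in W^{1,2}\cap C^0$.

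\textbf{Step 2: The Codazzi-type relations \eqref{L:1:3a}--\eqref{L:1:3b}.} These follow from the equality of mixed third derivatives. Using Step~1, $\partial_i\partial_j u=\secf_{ij}\,n$, so $\partial_k\partial_i\partial_j u=\partial_k(\secf_{ij})\,n+\secf_{ij}\,\partial_k n$. Taking the inner product with $n$ and using $n\cdot\partial_k n=0$ (since $|n|=1$) gives $n\cdot\partial_k\partial_i\partial_j u=\partial_k\secf_{ij}$. By Schwarz's theorem $\partial_k\partial_i\partial_j u$ is symmetric in all three indices (again after mollification), so $\partial_k\secf_{ij}=\partial_i\secf_{kj}$; specializing $(i,j,k)=(1,1,2)$ gives $\partial_2\secf_{11}=\partial_1\secf_{12}$ and $(i,j,k)=(2,1,2)$ gives $\partial_2\secf_{21}=\partial_1\secf_{22}$. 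One subtlety: $\secf\in L^2$ only, so the identities $\partial_k\secf_{ij}=\partial_i\secf_{kj}$ should be read distributionally; I would state and prove them by testing against $C_c^\infty$ functions and transferring derivatives onto the test function, so that no pointwise differentiability of $\secf$ is needed.

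\textbf{Step 3: The rank-one structure \eqref{L:1:2}.} The Gauss equation for a surface isometric to a flat domain says the determinant of the second fundamental form vanishes: $\secf_{11}\secf_{22}-\secf_{12}\secf_{21}=0$ a.e.\ (this follows from the $2\times2$ minor of the relation $\partial_i\partial_j u\cdot\partial_k\partial_l u$ combined with the flatness of $S$, or directly from $\det\nabla(\nabla u)$ considerations). Together with the symmetry $\secf_{12}=\secf_{21}$ (immediate from Step~1 and $\partial_1\partial_2 u=\partial_2\partial_1 u$), this means $\secf(x)$ is a symmetric $2\times2$ matrix of rank $\le 1$ a.e., hence of the form $\mu(x)\,T(x)\otimes T(x)$ for some $T(x)\in\mathcal S^1$ and $\mu(x)\in\R$, with $|\mu(x)|=|\secf(x)|$ measurable. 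On $S\setminus C_{\nabla u}$ I would identify $T$ with $-N^\perp$: by Proposition~\ref{P:L}, $\nabla u$ is constant along $[x;N(x)]_S$, so $\partial_{N(x)}(\nabla u)=0$, i.e.\ $\partial_i\partial_j u\,N_j(x)=0$; by Step~1 this reads $\secf(x)N(x)=0$, so the kernel of $\secf(x)$ contains $N(x)$, which forces $\secf(x)$ to be proportional to $N(x)^\perp\otimes N(x)^\perp=T(x)\otimes T(x)$ wherever $\secf(x)\ne 0$. On the flat set $C_{\nabla u}$ we have $\secf=0$ and the choice of $T$ is immaterial, so one can just extend $T$ measurably (e.g.\ by $e_1$) to all of $S$. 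Finally $\mu\in L^2(S)$ since $|\mu|=|\secf|\in L^2$.

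\textbf{Main obstacle.} The only genuinely delicate point is the repeated use of the chain and product rules for the product $W^{2,2}$ map $u$ with its $W^{1,2}\cap C^0$ gradient: the identities in Steps~1 and~2 must be justified either by mollification (with a uniform bound allowing passage to the limit in $L^1_{\loc}$) or by a careful distributional reading. Everything else---the cyclic-index trick, the Gauss equation, the rank-one decomposition, matching $T$ with $-N^\perp$ via Proposition~\ref{P:L}---is standard once the differentiation is legitimate. Since the lemma is attributed to \cite{Friesecke-James-Mueller-06} and \cite{Hornung-11a}, I expect the authors simply to cite these and perhaps sketch Step~3.
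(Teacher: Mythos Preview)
Your proposal is correct and your closing expectation is exactly right: the paper gives no proof of this lemma at all---it is simply stated with the remark that it ``can be found in \cite{Friesecke-James-Mueller-06} and \cite{Hornung-11a}.'' So there is nothing to compare against beyond the references themselves.

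For what it is worth, your sketch matches the standard argument in those references: the cyclic-index trick on $\partial_k(\partial_i u\cdot\partial_j u)=0$ to get $\partial_i\partial_j u\parallel n$, the Codazzi identities from commuting third partials of $u$ (read distributionally, justified by mollification using $\nabla u\in W^{1,2}\cap C^0$), and the rank-one structure from $\det\secf=0$ combined with the observation that $\secf\,N=0$ on $S\setminus C_{\nabla u}$ because $\nabla u$ is constant along $[x;N(x)]_S$. You also correctly flagged the only genuine technical point, namely the legitimacy of the product rule for the pairing of $\nabla^2 u\in L^2$ with $n\in W^{1,2}\cap C^0$; this is precisely what the mollification argument in \cite{Friesecke-James-Mueller-06} handles.
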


For a given $u\in W^{2,2}_\iso(S)$, we distinguish two subsets of the
non-flat part of $S$, which we call the \emph{cylindrical}
and the \emph{conical} part. To do so, we define  for $T\in {\mathcal S}^1$, 
\begin{align}
  \ct(x):=&\begin{cases}1 & \text{if }x\in S\setminus C_{\nabla u} \text{ and
    }N(x)\cdot T=0,\\0 & \text{else,}\end{cases}\label{eq:7}\\
  \cts(x):=&\begin{cases}\lim_{r\dto 0}\fint_{B(x,r)}\ct(y)\d y & \text{if the
      limit exists,} \\ 0 &\text{else.}\end{cases}\label{eq:6}
\end{align}

\begin{definition}
  \label{def:1}
  For $u\in W^{2,2}_{\iso}(S)$. We say $x\in S\setminus
  C_{\nabla u}$ is 
  \[
  \left\{
    \begin{aligned}
      &\text{cylindrical, and write }x\in Z_{\nabla u},&&\text{ if }\exists T\in{\mathcal S}^1:\cts(x)=1,\\
      &\text{conical, and write }x\in K_{\nabla u},&&\text{ if
      }\forall T\in {\mathcal S}^1:\cts(x)=0.
    \end{aligned}
  \right.
  \]
  We write $\cZ$ for the indicator function of $Z_{\nabla u}$.
\end{definition}

We conclude this section with some elementary properties of the introduced decomposition.

\begin{lemma}
  \label{L:Z}
  The sets $Z_{\nabla u}$ and $K_{\nabla u}$ are measurable.
  Furthermore, there exists a null set $E\subset S$ such that
  \begin{equation}
    \label{L:Z:1}
    S=C_{\nabla u}\cup Z_{\nabla u}\cup K_{\nabla u}\cup E.
  \end{equation}
  There exists a countable  set $\mathcal S_{\nabla
    u}\subset\mathcal S^1$ of pairwise non-parallel vectors, such that
  \begin{eqnarray}
    \label{L:Z:2}
    Z_{\nabla u}&=&\bigcup_{T\in\mathcal S_{\nabla u}}\{\cts=1\},\\
    \label{L:Z:3}
    \cZ&=&\sum_{T\in\mathcal S_{\nabla u}}\ct\qquad\text{a.e.~in }S.
  \end{eqnarray}
\end{lemma}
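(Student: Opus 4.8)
The plan is to reduce everything to the Lebesgue density theorem, the point being that the continuity of $N$ makes the analysis local to a single line at each point of $S\setminus C_{\nabla u}$. For $T\in\mathcal S^1$ I abbreviate $A_T:=\{\,x\in S\setminus C_{\nabla u}\,:\,N(x)\cdot T=0\,\}$, which is relatively closed in $S\setminus C_{\nabla u}$ by continuity of $N$ (Proposition~\ref{P:L}), hence measurable; note $\ct=\mathbf{1}_{A_T}$, and since $A_{-T}=A_T$ both $\ct$ and $\cts$ depend only on the line $\R T$. The first thing I would establish is the \emph{pointwise dichotomy}: $\cts(x)\neq 0$ implies $x\in A_T$. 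Indeed, if $x\in C_{\nabla u}$ then $\ct\equiv 0$ on a neighborhood of $x$; and if $x\in S\setminus C_{\nabla u}$ with $N(x)\cdot T\neq 0$, then by continuity of $N$ there is $\rho>0$ with $B(x,\rho)\subset S$, with $N(y)\cdot T\neq 0$ on $B(x,\rho)\cap(S\setminus C_{\nabla u})$ and with $\ct\equiv 0$ on $B(x,\rho)\cap C_{\nabla u}$, so $\ct\equiv 0$ on $B(x,\rho)$; in either case $\cts(x)=0$. Hence, for $x\in S\setminus C_{\nabla u}$ the number $\cts(x)$ can be nonzero only when $N(x)\cdot T=0$, i.e.\ $T=\pm T(x)$ with $T(x)=-N(x)^\perp$ as in Lemma~\ref{L:0}.

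Next I would introduce $\mathcal D$, the set of lines $\ell\subset\R^2$ through the origin with $|A_T|>0$ for $T\in\mathcal S^1\cap\ell$. For distinct $\ell_1\neq\ell_2$ one has $A_{T_1}\cap A_{T_2}=\emptyset$, since a nonzero vector of $\R^2$ cannot be orthogonal to two non-parallel directions; thus $\{A_{T_\ell}\}_{\ell\in\mathcal D}$ is a pairwise disjoint family of positive-measure subsets of the bounded set $S$, so $\mathcal D$ is countable. Choosing one $T_\ell\in\mathcal S^1\cap\ell$ for each $\ell\in\mathcal D$, I set $\mathcal S_{\nabla u}:=\{\,T_\ell:\ell\in\mathcal D\,\}$, a countable set of pairwise non-parallel unit vectors. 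By the Lebesgue density theorem, $\{\cts=1\}$ — the set of density-$1$ points of $A_T$ — is measurable and coincides with $A_T$ up to a null set. For \eqref{L:Z:2}, the inclusion $\supseteq$ is immediate from Definition~\ref{def:1}; conversely, if $x\in Z_{\nabla u}$ then $\cts(x)=1\neq 0$ for some $T$, so by the dichotomy $x\in A_T$, and density $1$ forces $|A_T|>0$, i.e.\ $\R T=\ell\in\mathcal D$ and $x\in\{\chi^*_{\nabla u,T_\ell}=1\}$. This also displays $Z_{\nabla u}$ as a countable union of measurable sets, hence measurable.

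For \eqref{L:Z:3}: on $C_{\nabla u}$ all terms vanish, while for $x\in S\setminus C_{\nabla u}$ the pairwise non-parallelism of $\mathcal S_{\nabla u}$ forces $\sum_{T\in\mathcal S_{\nabla u}}\ct(x)=\mathbf{1}_{G}(x)$, where $G:=\bigcup_{\ell\in\mathcal D}A_{T_\ell}$; since each $\{\chi^*_{\nabla u,T_\ell}=1\}$ equals $A_{T_\ell}$ up to a null set, $Z_{\nabla u}$ equals $G$ up to a null set, whence $\cZ=\mathbf{1}_{G}=\sum_{T\in\mathcal S_{\nabla u}}\ct$ almost everywhere. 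Finally, $C_{\nabla u},Z_{\nabla u},K_{\nabla u}$ are pairwise disjoint, so for \eqref{L:Z:1} it suffices to show $E:=S\setminus(C_{\nabla u}\cup Z_{\nabla u}\cup K_{\nabla u})$ is a null set; then $K_{\nabla u}=S\setminus(C_{\nabla u}\cup Z_{\nabla u}\cup E)$ is measurable. By the dichotomy, $x\in E$ forces $x\in S\setminus C_{\nabla u}$ and the density of $A_{T(x)}$ at $x$ to exist and lie in $(0,1)$. If $x\in G$, then $A_{T(x)}=A_{T_\ell}$ for the relevant $\ell$ (both sets contain $x$, forcing $\R T(x)=\ell$), so $x$ belongs to the null set of points of $A_{T_\ell}$ at which $A_{T_\ell}$ fails to have density $1$; summing over the countably many $\ell\in\mathcal D$ shows $E\cap G$ is null. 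If $x\notin G$, then $\R T(x)\notin\mathcal D$, so $|A_{T(x)}|=0$ and the density of $A_{T(x)}$ at $x$ equals $0$, contradicting $x\in E$; hence $E\subseteq G$ is null.

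The step I expect to be the crux is the pointwise dichotomy together with the bookkeeping it enables: once one knows that at each $x\in S\setminus C_{\nabla u}$ only the single line $\R N(x)^\perp$ matters, the statement decouples into a countable collection of standard Lebesgue-density assertions. I would also be careful with the convention that $\cts(x)=0$ whenever the averaging limit fails to exist — it is exactly this convention that places such points into $K_{\nabla u}$ rather than into the exceptional null set $E$.
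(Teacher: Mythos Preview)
Your proof is correct and follows essentially the same strategy as the paper's: countability of the relevant directions via pairwise disjointness of the positive-measure sets $A_T$, together with the Lebesgue density theorem for the identification $\{\cts=1\}=A_T$ up to a null set and for the treatment of $E$. The one minor difference is your explicit ``pointwise dichotomy'' $\cts(x)\neq 0\Rightarrow x\in A_T$, which you obtain from the continuity of $N$; the paper does not invoke continuity here and argues purely measure-theoretically (observing directly that $\cts(x)>0$ forces $|A_T|>0$), but your extra step is sound and makes the bookkeeping slightly more transparent.
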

\begin{proof}
  Consider the set
  \begin{equation*}
    \mathcal T:=\{\,T\in\mathcal S^1\,:\,\mathcal L^2(\{x\in
    S\setminus C_{\nabla u}\,:\,N(x)=T^\perp\})>0\},
  \end{equation*}
  which can be written as
  \begin{equation*}
    \mathcal T=\bigcup_{k\in\N}\mathcal T_k,\qquad \mathcal T_k:=\{\,T\in\mathcal S^1\,:\,\mathcal L^2(\{x\in
    S\setminus C_{\nabla u}\,:\,N(x)=T^\perp\})>\tfrac{1}{k}\}.
  \end{equation*}
  Since $\mathcal L^2(S)<\infty$, and since the sets $\{x\in
  S\setminus C_{\nabla u}\,:\,N(x)=T^\perp\}$, $T\in\mathcal S^1$, are
  pairwise disjoint, each $\mathcal T_k$ only contains a
  finite number of elements, and thus $\mathcal T$ is
  countable. From the definition of $\cts$ it is clear that each
  $T\in\mathcal S^1$ with $\cts(x)=1$ for some $x\in S$ must be an
  element of $\mathcal T$ or $-\mathcal T$. Hence, the set
  \begin{equation*}
    \tilde{\mathcal S}:=\{\,T\in\mathcal S^1\,:\,\exists x\in S\text{
      s.t. }\cts(x)=1\,\}
  \end{equation*}
  is at most countable, and we get
  \begin{equation}
    \label{eq:11}
    Z_{\nabla u}=\bigcup_{T\in\tilde{\mathcal
    S}}\{\cts=1\}.
  \end{equation}
  Since this is a countable union of measurable sets, we deduce that
  $Z_{\nabla u}$ is measurable.   By virtue of the invariance property $\cts=\chi_{\nabla
  u,-T}^*$, we may replace in \eqref{eq:11}  the set $\tilde{\mathcal T}$ by
a suitable set $\mathcal S_{\nabla u}\subset\tilde{\mathcal T}$ of
mutually non-parallel vectors. This proves \eqref{L:Z:2}. 

By the Lebesgue Differentiation Theorem, we have $\ct=\cts$
almost everywhere in $S$. Hence, since $\mathcal S_{\nabla u}$ is countable, we can find a
common null set $A\subset S$ such that $\ct(x)=\cts(x)$ for all $x\in
S\setminus A$ and all $T\in\mathcal S_{\nabla u}$, and thus
\eqref{L:Z:3} follows.

We finally prove \eqref{L:Z:1}. Set 
\begin{equation*}
  E:=S\setminus(C_{\nabla u}\cup Z_{\nabla u}\cup K_{\nabla u}),
\end{equation*}
and let $x\in E$. Then there exists $T\in\mathcal S^1$ such that
$0<\cts(x)<1$. By the same reasoning as above, we deduce that
$T\in\mathcal T$. Since this is true for any $x\in E$, we get
$E\subset\bigcup_{T\in\mathcal T}\{0<\cts<1\}$. Since indicator
functions $\{0,1\}$-valued, the Lebesgue Differentiation Theorem
implies that $\{0<\cts<1\}$ is a null set, and thus $E$ is contained in
a countable union of null sets, and thus a null set itself.
\end{proof}

\begin{remark}
  \label{sec:sets-z_nabla-u}
We are grateful to an anonymous referee, who pointed out to us that neither
$Z_{\nabla u}$ nor $K_{\nabla u}$ can be sensibly defined as open sets. Indeed,
these sets (defined as above) could be of positive measure, but not contain any open ball -- i.e.,
they could be of fat Cantor type.
\end{remark}

\medskip

\subsection{Two-scale convergence.}\label{sec:2.1}
Let $Y=[0,1)^2$ denote the unit cell in $\R^2$, and let
$\Y:=\R^2/\Z^2$ denote the unit torus. We denote by $C(\mathcal
Y)$ (resp. $C^\infty(\mathcal Y)$) the space of continuous (resp. smooth)
functions on the torus. We tacitly identify functions
in  $C(\mathcal Y)$ (resp. $C^\infty(\mathcal Y)$) with continuous
(resp. smooth), $Y$-periodic, functions on $\R^2$. We denote by
$L^2(\mathcal Y)$ (resp. $W^{1,2}(\mathcal Y)$) the closure of $C^\infty(\mathcal Y)$ as a subspace
of $L^2_\loc(\R^2)$ (resp. $W^{1,2}_\loc(\R^2)$). Note that $L^2(Y)\simeq L^2(\mathcal Y)$, while
$W^{1,2}(\mathcal Y)\neq W^{1,2}(Y)$. From \cite{Nguetseng-89} and \cite{Allaire-92} we cite the definition of weak two-scale convergence:
\begin{definition}
\label{twodef}
A bounded sequence $w^\e\in L^2(S)$  \textit{weakly two-scale converges} to
$w\in L^2(S\times\Y)$ if and only if 
\begin{equation*}
  \lim_{\e\downarrow 0}\int_S w^{\e}(x)\psi(x,x/\e)dx
=\int_{S\times Y}w(x,y)\psi(x,y)\,dx\, dy\quad \forall \psi\in C^\infty_0(S\times\Y).
\end{equation*}
Then we write $w^\e\wtto w$ in
$L^2(S\times\Y)$. If the sequence satisfies in addition
\begin{equation*}
  \lim_{\e\downarrow 0}\int_S |w^\e(x)|^2dx
=\int_{S\times Y}|w(x,y)|^2\,dx\, dy
\end{equation*}
then we say that $w^\e$ is \textit{strongly two-scale convergent} to $w$ and write
$w^\e\stto w$. For vector valued functions we define weak and strong
two-scale convergence component-wise.
\end{definition}

The following result can be found in \cite{Allaire-92}. It is an elementary but
fundamental property of two-scale convergence and allows to pass to the
limit in products of weakly convergent sequences.
\begin{lemma}
  \label{L:2scale:1}
  Let $S\subset\R^2$ be open and bounded. Consider two sequences
  $w^\e$ and $\psi^\e$ that are bounded in $L^2(S)$, and suppose that
$w^\e\stto w$ strongly two-scale and $\psi^\e\wtto\psi$ weakly two-scale
in $L^2(S\times\mathcal Y)$. Then
\begin{equation*}
  \int_S w^\e(x)\psi^\e(x)\,dx\,\to\,\int_{S\times Y}w(x,y)\psi(x,y)\,dydx.
\end{equation*}
\end{lemma}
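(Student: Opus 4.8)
The plan is to reduce the statement to the very definition of weak two-scale convergence by approximating the strong two-scale limit $w$ by smooth oscillating test functions and estimating the resulting error with the Cauchy--Schwarz inequality.

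The first ingredient I would record is the elementary \emph{mean value property}: for every $\phi\in C^\infty_0(S\times\Y)$ one has $\phi(\cdot,\cdot/\e)\stto\phi$ in $L^2(S\times\Y)$, and in particular
\begin{equation*}
  \lim_{\e\downarrow 0}\int_S|\phi(x,x/\e)|^2\,dx=\int_{S\times Y}|\phi(x,y)|^2\,dy\,dx.
\end{equation*}
This follows from the Riemann--Lebesgue lemma applied to the $Y$-periodic function $y\mapsto|\phi(x,y)|^2$ together with the uniform continuity of $\phi$ in $x$ (equivalently, one covers $S$ by $\e$-cells and freezes the slow variable on each cell). This is the only place where the periodic structure genuinely enters.

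Next, for a fixed $\phi\in C^\infty_0(S\times\Y)$ I would expand a square and pass to the limit term by term, using the boundedness of $w^\e$ in $L^2(S)$:
\begin{align*}
  \lim_{\e\downarrow 0}\int_S|w^\e(x)-\phi(x,x/\e)|^2\,dx
  &=\lim_{\e\downarrow 0}\Big(\int_S|w^\e|^2-2\int_S w^\e(x)\,\phi(x,x/\e)\,dx+\int_S|\phi(x,x/\e)|^2\Big)\\
  &=\int_{S\times Y}|w|^2-2\int_{S\times Y}w\,\phi+\int_{S\times Y}|\phi|^2=\int_{S\times Y}|w-\phi|^2,
\end{align*}
where the first term converges by the strong two-scale convergence $w^\e\stto w$, the cross term converges because $\phi$ is an admissible test function and $w^\e\wtto w$, and the last term converges by the mean value property above.

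Finally I would split, for arbitrary $\phi\in C^\infty_0(S\times\Y)$,
\begin{equation*}
  \int_S w^\e\psi^\e=\int_S\big(w^\e(x)-\phi(x,x/\e)\big)\psi^\e(x)\,dx+\int_S\phi(x,x/\e)\psi^\e(x)\,dx,
\end{equation*}
where the second integral tends to $\int_{S\times Y}\phi\,\psi$ by the weak two-scale convergence of $\psi^\e$, while the first is bounded by $\|w^\e-\phi(\cdot,\cdot/\e)\|_{L^2(S)}\,\|\psi^\e\|_{L^2(S)}\le C\,\|w^\e-\phi(\cdot,\cdot/\e)\|_{L^2(S)}$ with $C:=\sup_\e\|\psi^\e\|_{L^2(S)}<\infty$. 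Combining this with the previous step yields
\begin{equation*}
  \limsup_{\e\downarrow 0}\Big|\int_S w^\e\psi^\e-\int_{S\times Y}\phi\,\psi\Big|\le C\Big(\int_{S\times Y}|w-\phi|^2\Big)^{1/2}.
\end{equation*}
Since $C^\infty_0(S\times\Y)$ is dense in $L^2(S\times Y)$, letting $\phi$ run through a sequence converging to $w$ in $L^2(S\times Y)$ makes both $\int_{S\times Y}\phi\,\psi\to\int_{S\times Y}w\,\psi$ and the right-hand side above vanish, which proves the claim. I do not expect a real obstacle here: the mean value property for admissible test functions is the only non-formal step, and everything else is the density argument together with Cauchy--Schwarz.
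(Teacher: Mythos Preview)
Your argument is correct and is essentially the standard proof of this fact; the paper itself does not prove the lemma but merely cites \cite{Allaire-92}, where precisely this density/Cauchy--Schwarz argument is carried out.
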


The following lemma can be found as Proposition 2.12 in
\cite{Visintin-06} and is helpful for the computation of strong two-scale limits
for products.
\begin{lemma}
\label{L:stto}
Let $p,q\geq 1$, and let $v^\e$, $w^\e$ be sequences in $L^p(S)$, $L^q(\Y)$ respectively, with $v^\e\to v$ in $L^p(S)$ and $w^\e\to w$ in $L^q(\Y)$. Then
\[
v^\e(x)w^\e(x/\e)\stto v(x)w(y)\quad \text{ in  }L^r(S\times\Y)\,,
\]
where $r^{-1}=p^{-1}+q^{-1}$.
\end{lemma}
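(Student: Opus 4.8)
The claim is that if $v^\e\to v$ in $L^p(S)$ and $w^\e\to w$ in $L^q(\Y)$, then the product $v^\e(x)w^\e(x/\e)$ strongly two-scale converges in $L^r(S\times\Y)$, $r^{-1}=p^{-1}+q^{-1}$, to $v(x)w(y)$. The natural strategy is a three-step density/approximation argument. First I would establish the result in the model case where both factors are smooth, i.e. $v\in C_c^\infty(S)$ and $w\in C^\infty(\Y)$. In that situation $v(x)w(x/\e)$ is an admissible test function for the two-scale convergence of $1$ (or one checks the defining limit directly), and the convergence $\int_S v(x)w(x/\e)\psi(x,x/\e)\,dx\to\int_{S\times Y}v(x)w(y)\psi(x,y)\,dx\,dy$ follows from the classical Riemann--Lebesgue / mean-value lemma for rapidly oscillating periodic functions; the norm convergence $\int_S|v(x)w(x/\e)|^r\,dx\to\int_{S\times Y}|v(x)w(y)|^r\,dx\,dy$ is of the same type since $|v|^r\in C_c(S)$ and $|w|^r\in C(\Y)$. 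This gives strong two-scale convergence of $v(x)w(x/\e)$ in $L^r(S\times\Y)$ for the smooth pair.

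Second, I would pass from the smooth pair $(v,w)$ to the general limit pair $(v,w)\in L^p(S)\times L^q(\Y)$ by density: choose $v_k\in C_c^\infty(S)$ with $v_k\to v$ in $L^p(S)$ and $w_k\in C^\infty(\Y)$ with $w_k\to w$ in $L^q(\Y)$. By Step~1, for each fixed $k$ the sequence $v_k(x)w_k(x/\e)$ strongly two-scale converges to $v_k(x)w_k(y)$. One then compares $v_k(x)w_k(x/\e)$ with the ``target'' family; the point is that strong two-scale convergence is metrizable-like enough that a diagonal argument works: the errors are controlled by $\|v_k(\cdot)w_k(\cdot/\e)-v(\cdot)w(\cdot/\e)\|_{L^r(S)}$, which by H\"older splits into $\|v_k-v\|_{L^p(S)}\|w_k(\cdot/\e)\|_{L^q}$ plus $\|v\|_{L^p}\|(w_k-w)(\cdot/\e)\|_{L^q}$-type terms; and crucially $\|w_k(\cdot/\e)-w(\cdot/\e)\|_{L^q(S)}$ is, up to a factor depending only on $|S|$, bounded by $\|w_k-w\|_{L^q(\Y)}$ uniformly in $\e$ (this is just the scaling of periodic $L^q$-norms). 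Hence the family $v_k(x)w_k(x/\e)$ approximates $v(x)w(x/\e)$ uniformly in $\e$ in $L^r(S)$, and simultaneously $v_k(x)w_k(y)\to v(x)w(y)$ in $L^r(S\times\Y)$; a standard $\e$/$k$ argument then yields strong two-scale convergence of $v(x)w(x/\e)$ to $v(x)w(y)$.

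Third and finally, I would replace the fixed functions $v$ and $w$ by the \emph{sequences} $v^\e$ and $w^\e$. Writing $v^\e(x)w^\e(x/\e)-v(x)w(x/\e)=(v^\e(x)-v(x))w^\e(x/\e)+v(x)(w^\e-w)(x/\e)$, one estimates the $L^r(S)$-norm of the difference by H\"older: the first term is bounded by $\|v^\e-v\|_{L^p(S)}\,\|w^\e(\cdot/\e)\|_{L^q(S)}\lesssim \|v^\e-v\|_{L^p(S)}\,\|w^\e\|_{L^q(\Y)}$, which $\to0$ since $\|w^\e\|_{L^q(\Y)}$ stays bounded; the second is bounded by $\|v\|_{L^p(S)}\,\|(w^\e-w)(\cdot/\e)\|_{L^q(S)}\lesssim \|v\|_{L^p(S)}\,\|w^\e-w\|_{L^q(\Y)}\to0$. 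So $v^\e(x)w^\e(x/\e)$ and $v(x)w(x/\e)$ have the same strong two-scale limit (one checks that an $L^r(S)$-perturbation that vanishes does not affect either the weak two-scale limit or the norm limit, which is immediate from the definitions), and by Step~2 that limit is $v(x)w(y)$. The main obstacle — and the only place requiring care — is Step~2: making the diagonal/approximation argument rigorous, i.e. verifying that ``being close in $L^r(S)$ uniformly in $\e$ to a strongly two-scale convergent family, while the limits converge in $L^r(S\times\Y)$,'' forces strong two-scale convergence to the limit of the limits. This is routine once one writes out the two defining conditions (the tested-integral condition and the norm condition) and uses the uniform-in-$\e$ bound $\|w_k(\cdot/\e)-w(\cdot/\e)\|_{L^q(S)}\le C\,\|w_k-w\|_{L^q(\Y)}$, but it is where all the bookkeeping lives.
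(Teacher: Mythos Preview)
The paper does not give its own proof of this lemma; it simply cites it as Proposition~2.12 in Visintin's paper. So there is no in-paper argument to compare against, and your proposal should be judged on its own merits.

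Your three-step scheme (smooth model case $\to$ density in the limit pair $\to$ replace fixed limits by the sequences $v^\e,w^\e$) is the standard route and is correct. The two ingredients that carry all the weight are exactly the ones you isolate:
\begin{itemize}
\item the mean-value/Riemann--Lebesgue fact that for $g\in C(\Y)$ and $f\in C_c(S)$ one has $\int_S f(x)g(x/\e)\,dx\to \int_S f\,\int_Y g$, applied both to the tested integrals and, with $f=|v|^r$, $g=|w|^r$, to the norm convergence in Step~1;
\item the uniform-in-$\e$ scaling bound $\|h(\cdot/\e)\|_{L^q(S)}\le C(S)\,\|h\|_{L^q(\Y)}$ for periodic $h$, which makes both the density argument in Step~2 and the perturbation argument in Step~3 go through via H\"older.
\end{itemize}
Your remark that ``an $L^r(S)$-perturbation that vanishes does not affect either the weak two-scale limit or the norm limit'' is the right closure principle; for the weak part it follows since test functions $\psi(x,x/\e)$ are bounded, and for the norm part from the triangle inequality.

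One small point worth flagging: the paper only writes down the \emph{definition} of (weak/strong) two-scale convergence in $L^2$ (Definition~\ref{twodef}), whereas the lemma is stated in $L^r$. Your argument implicitly uses the obvious $L^r$-analogue (test against $C_c^\infty(S\times\Y)$; require convergence of the $L^r$-norm), which is precisely the framework in Visintin's paper that the authors cite. This is not a gap in your reasoning, just a mismatch with the surrounding text; in the applications within this paper only the $L^2$ case is actually used.
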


Two-scale convergence allows to conveniently pass to limits in convex
functionals with periodic coefficients. The following lemma is a
special case of \cite[Proposition~1.3]{Visintin-07} 
\begin{lemma}
  \label{L:twoscale-lsc}
  Let $A\subset\R^2$ be open and bounded, and let $Q$ satisfy  Assumption~\ref{ass}.
  \begin{enumerate}[(a)]
  \item Suppose that $G^\e\in L^2(A,\R^{2\times 2})$ weakly two-scale
    converges to $G\in L^2(A\times\mathcal Y,\R^{2\times 2})$. Then
    \begin{equation*}
      \liminf\limits_{\e\downarrow 0}\int_A
      Q(\tfrac{x}{\e},G^\e(x))\,dx\geq \int_{A\times Y}Q(y,G(x,y))\,dydx.
    \end{equation*}
  \item Suppose that $G^\e\in L^2(A,\R^{2\times 2})$ strongly two-scale
    converges to $G\in L^2(A\times\mathcal Y,\R^{2\times 2})$. Then
    \begin{equation*}
      \lim\limits_{\e\downarrow 0}\int_A
      Q(\tfrac{x}{\e},G^\e(x))\,dx=\int_{A\times Y}Q(y,G(x,y))\,dydx.
    \end{equation*}
  \end{enumerate}
\end{lemma}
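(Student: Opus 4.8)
This lemma is the special case of \cite[Proposition~1.3]{Visintin-07} relevant here, so I will indicate how its proof runs in the present setting. The plan is to combine the convex quadratic structure of $Q$ with the elementary calculus of two-scale convergence from Lemmas~\ref{L:2scale:1} and~\ref{L:stto}. First note that, by (Q3), $Q(y,F)$ depends on $F$ only through $\sym F$, and that $\sym G^\e\wtto\sym G$ (resp.\ $\stto$) whenever $G^\e\wtto G$ (resp.\ $\stto$); since $Q(\tfrac x\e,G^\e(x))=Q(\tfrac x\e,\sym G^\e(x))$, I may and do assume that $G^\e$ and $G$ take values in $\R^{2\times2}_{\sym}$. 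Write $Q(y,F)=\tfrac12\,F:\mathcal C(y)F$ with $\mathcal C$ a bounded, measurable, $Y$-periodic, symmetric and positive-definite tensor field on $\R^{2\times2}_{\sym}$, and let $Q^*(y,\cdot)$ be the Fenchel conjugate of $Q(y,\cdot)$ on $\R^{2\times2}_{\sym}$; then $Q^*(y,\cdot)$ is again a finite nondegenerate quadratic form, $Q(y,F)=\sup_{P\in\R^{2\times2}_{\sym}}\bigl(P:F-Q^*(y,P)\bigr)$, with the supremum attained at $P=\mathcal C(y)F$.

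For part (a), I would fix $\Phi\in C_c^\infty(A\times\Y,\R^{2\times2}_{\sym})$ and start from Fenchel's inequality
\begin{equation*}
  \int_A Q(\tfrac x\e,G^\e(x))\,dx\ \ge\ \int_A \Phi(x,\tfrac x\e):G^\e(x)\,dx\ -\ \int_A Q^*(\tfrac x\e,\Phi(x,\tfrac x\e))\,dx.
\end{equation*}
By Definition~\ref{twodef} the first term on the right converges to $\int_{A\times Y}\Phi:G$. For the second term I would use that $Q^*$ is quadratic in its last argument: approximating $\Phi$ uniformly by finite sums $\sum_k a_k(x)b_k(y)$ with $a_k\in C_c^\infty(A)$ and $b_k\in C(\Y)$, one checks that $Q^*(\tfrac x\e,\Phi(x,\tfrac x\e))$ equals, up to a uniformly small error, a finite sum of products of a smooth function of $x$ with a bounded $Y$-periodic function of $\tfrac x\e$, so the classical Riemann--Lebesgue lemma yields $\int_A Q^*(\tfrac x\e,\Phi(x,\tfrac x\e))\,dx\to\int_{A\times Y}Q^*(y,\Phi(x,y))\,dy\,dx$. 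Hence $\liminf_{\e\downarrow0}\int_A Q(\tfrac x\e,G^\e)\,dx\ \ge\ \int_{A\times Y}\bigl(\Phi:G-Q^*(y,\Phi)\bigr)\,dy\,dx$ for every such $\Phi$; taking the supremum over $\Phi$ and noting that the pointwise maximiser $P^*(x,y):=\mathcal C(y)G(x,y)$ already lies in $L^2(A\times Y,\R^{2\times2}_{\sym})$ and can be approximated in $L^2$ by admissible $\Phi$ (again using the quadratic continuity of $Q^*$), the right-hand side reaches $\int_{A\times Y}Q(y,G(x,y))\,dy\,dx$. This gives (a).

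For part (b), the lower bound is (a), so it remains to show $\limsup_{\e\downarrow0}\int_A Q(\tfrac x\e,G^\e)\,dx\le\int_{A\times Y}Q(y,G)\,dy\,dx$. I would fix $\Phi\in C_c^\infty(A\times\Y,\R^{2\times2}_{\sym})$ and expand, using bilinearity,
\begin{align*}
  \int_A Q(\tfrac x\e,G^\e(x))\,dx
  ={}&\int_A Q\bigl(\tfrac x\e,G^\e(x)-\Phi(x,\tfrac x\e)\bigr)\,dx\\
  &+\int_A Q\bigl(\tfrac x\e,\Phi(x,\tfrac x\e)\bigr)\,dx\\
  &+2\int_A\bigl(G^\e(x)-\Phi(x,\tfrac x\e)\bigr):\mathcal C(\tfrac x\e)\Phi(x,\tfrac x\e)\,dx.
\end{align*}
By Lemma~\ref{L:stto}, $\Phi(\cdot,\tfrac\cdot\e)\stto\Phi$ and $\mathcal C(\tfrac\cdot\e)\Phi(\cdot,\tfrac\cdot\e)\stto\mathcal C(y)\Phi$ strongly two-scale; since $G^\e\stto G$ and $G^\e\wtto G$, the difference $G^\e-\Phi(\cdot,\tfrac\cdot\e)$ converges weakly two-scale to $G-\Phi$ and, expanding the square, $\|G^\e-\Phi(\cdot,\tfrac\cdot\e)\|_{L^2(A)}^2\to\|G-\Phi\|_{L^2(A\times Y)}^2$. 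Using the upper bound in (Q3) for the first integral, Lemma~\ref{L:2scale:1} for the last, and the Riemann--Lebesgue argument of part (a) for the middle one, I obtain $\limsup_{\e\downarrow0}\int_A Q(\tfrac x\e,G^\e)\,dx\le\tfrac1\alpha\|G-\Phi\|_{L^2(A\times Y)}^2+2\int_{A\times Y}(G-\Phi):\mathcal C(y)\Phi\,dy\,dx+\int_{A\times Y}Q(y,\Phi)\,dy\,dx$, and letting $\Phi\to G$ in $L^2(A\times Y)$ — using the $L^2$-continuity of $\Phi\mapsto\int_{A\times Y}Q(y,\Phi)$ and of the bilinear term — finishes the proof.

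The one step I expect to be the actual obstacle is dealing with the fact that (Q1) only requires measurability of $Q(\cdot,F)$: consequently $Q^*(y,\Phi(x,y))$, $\mathcal C(y)\Phi(x,y)$ and $Q(y,\Phi(x,y))$ are \emph{not} admissible test functions in the sense of Definition~\ref{twodef}. The way around it, and the point where the quadratic dependence on $F$ is essential, is to reduce each of these to a finite sum of terms of the form (smooth and compactly supported in $x$)$\times$(bounded $Y$-periodic in $y$) and apply the classical Riemann--Lebesgue lemma, controlling the approximation by a uniform quadratic continuity estimate. Once this is in place, everything else is the standard Fenchel-duality proof of lower semicontinuity of convex integral functionals along two-scale convergent sequences, supplemented by the quadratic-expansion argument for the strong statement.
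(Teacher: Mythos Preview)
The paper does not actually prove this lemma: it simply records that the statement is a special case of \cite[Proposition~1.3]{Visintin-07} and moves on. Your proposal is thus not being compared to a proof in the paper, but rather supplies a proof where the paper gives only a citation.

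Your sketch is correct and is essentially the standard Fenchel-duality argument behind Visintin's result, specialised to the quadratic setting at hand. The lower-bound part via Fenchel's inequality with smooth test fields $\Phi$, together with the Riemann--Lebesgue step for the dual term, is sound; the upper-bound part via the bilinear expansion around $\Phi(x,\tfrac{x}{\e})$, using strong two-scale convergence of $G^\e$ and the upper bound in (Q3), is the natural way to close the gap. You have also correctly isolated the only genuine technical point, namely that (Q1) gives mere measurability of $\mathcal C(\cdot)$ so that expressions like $\mathcal C(\tfrac{x}{\e})\Phi(x,\tfrac{x}{\e})$ are not admissible two-scale test functions; your resolution---exploiting the \emph{quadratic} dependence on $F$ to reduce, via tensor-product approximation of $\Phi$, to finite sums of terms of the form (smooth in $x$)$\times$(bounded $Y$-periodic in $y$), to which Riemann--Lebesgue applies---is exactly how this is handled and uses the full strength of (Q2)--(Q3). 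In short: nothing to correct; you have written out what the paper's citation hides.
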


\section{Two-scale limits of second fundamental forms}\label{S:3}
In this section we analyze the structure of two-scale limits
of second fundamental forms. We consider the following generic situation:
\begin{itemize}
\item[(LB)] Let $u^\e$ be a sequence in $W^{2,2}_{\iso}(S)$, let $u\in
  W^{2,2}_{\iso}(S)$, and let $G\in L^2(S\times\mathcal Y,\R^{2\times
    2})$. Suppose that
  \begin{equation}\label{P:1:ass3}
    \left\{\begin{aligned}
      u^\e\wto\,& u&&\text{weakly in }W^{2,2}(S),\\
      \secf^\e\wtto\,&\secf(x)+G(x,y)&&\text{weakly two-scale in
      }L^2(S\times\mathcal Y,\R^{2\times 2}),
    \end{aligned}\right.
  \end{equation}
  as $\e\downarrow 0$.
\end{itemize}
(Note that (LB) is generic, since from  every sequence $u^\e\in W^{2,2}_{\iso}(S)$ that is
bounded in $W^{2,2}(S)$ we may extract a subsequence that
satisfies (LB)). The two-scale field $G$ captures certain modes of oscillations of $\secf^\e$ that emerge in the limit $\e\downarrow
0$. Our goal is to understand and identify the structure of $G$.  
\medskip

Some information on $G$ can easily be obtained by standard results of
two-scale convergence:   As a consequence of \eqref{L:1:3a} and \eqref{L:1:3b} we may
represent the second fundamental form of an arbitrary isometry as the
Hessian of a scalar field. In particular,  $\secf^\e=\nabla^2\varphi^\e$ for some
$\varphi^\e\in W^{2,2}(S)$. As an immediate consequence, we find that $G(x,y)=\nabla^2_y\psi(x,y)$ where $\psi\in
L^2(S,H^2(\mathcal Y))$. However, this simple reasoning, which does
not exploit the nonlinear constraint \eqref{ass:isom}, is far from being optimal. In fact,
below we show that
oscillations of $\secf^\e$ on scale $\e$ are suppressed in regions where the
limiting isometric immersion $u$ is neither cylindrical nor flat.
Moreover, we prove that at points where $u$ is cylindrical,
oscillations on scale $\e$ can only emerge perpendicular to asymptotic
directions. 
\medskip

Our findings are summarized in the upcoming result, which is the main
tool in proving the lower bound for the $\Gamma$-convergence result.

\begin{proposition}\label{P:1}
  Suppose (LB). 
  Then  the following properties hold:
  \begin{enumerate}[(a)]
  \item (conical case). $G=0$ almost everywhere in $K_{\nabla
      u}\times\mathcal Y$.
    \smallskip
  \item (cylindrical case). Let $\mathcal S_{\nabla u}$ denote the set
    introduced in Lemma~\ref{L:Z}. Then for each $T\in\mathcal S_{\nabla u}\cap\mathcal
    S^1_*$ there exists a function $\alpha_T\in
    L^2(S,W^{1,2}_{T\text{-per}}(\R))$ such that
    \begin{equation}\label{eq:25}
      \begin{split}
        \cZ(x)G(x,y)\,=\,\sum_{T\in\mathcal S_{\nabla u}\cap\mathcal S^1_*}\chi_{\nabla
          u,T}(x)\partial_s\alpha_{T}(x,T\cdot y)\Big(T\otimes
        T\Big)\\\text{for a.e. }(x,y)\in S\times\mathcal Y.
      \end{split}
    \end{equation}
    Here $\partial_s\alpha_T$ denotes the derivative of $\alpha_T$
    w.~r.~t. its second component.
  \end{enumerate}
\end{proposition}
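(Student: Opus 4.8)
The plan is to exploit the differential constraints \eqref{L:1:3a}--\eqref{L:1:3b} together with the structural identity \eqref{L:1:2} for each $u^\e$, and to see how these pass to the two-scale limit. First I would record that $\secf^\e = \mu^\e T^\e\otimes T^\e$ with $T^\e\to T$ locally uniformly on $S\setminus C_{\nabla u}$ (using Lemma~\ref{L:0}, Proposition~\ref{P:L}, and the $W^{1,2}$-convergence of $\nabla u^\e$, which gives convergence of the asymptotic direction fields away from the flat part). Since $T^\e\otimes T^\e \to T\otimes T$ strongly in $L^2$, Lemma~\ref{L:2scale:1} lets me pass to the limit in the product $\secf^\e : (S^\e\otimes S^\e)^\perp$-type test expressions; the upshot is that the two-scale limit $\secf(x)+G(x,y)$ must itself be a scalar multiple of $T(x)\otimes T(x)$ for a.e.\ $(x,y)$ on $(S\setminus C_{\nabla u})\times\mathcal Y$, i.e.\ $G(x,y)=g(x,y)\,T(x)\otimes T(x)$ for some $g\in L^2(S\setminus C_{\nabla u}\times\mathcal Y)$ with $\int_{\mathcal Y} g(x,y)\,dy=0$.

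\textbf{Passing the curl-type constraints to the two-scale limit.} Next I would feed the identities \eqref{L:1:3a}--\eqref{L:1:3b} — equivalently, that $\secf^\e=\nabla^2\varphi^\e$ is a Hessian — into the two-scale machinery. Testing against oscillating test functions of the form $\psi(x)\phi(x/\e)$ and integrating by parts twice, the leading-order ($\e^{-2}$) terms force $\nabla_y^2\psi(x,y)=G(x,y)$ for some $\psi\in L^2(S,W^{2,2}(\mathcal Y))$; this is the ``easy'' information already announced in the text. Combining with the previous step, $\nabla^2_y\psi(x,y)=g(x,y)\,T(x)\otimes T(x)$, so the $y$-Hessian of $\psi(x,\cdot)$ is everywhere proportional to the fixed rank-one matrix $T\otimes T$. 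A rank-one constraint on a Hessian is rigid: writing coordinates $y=(y\cdot T)T+(y\cdot T^\perp)T^\perp$, the conditions $\partial^2_{T^\perp}\psi=0$ and $\partial_{T}\partial_{T^\perp}\psi=0$ force $\psi(x,y)$ to be affine in the $y\cdot T^\perp$ variable, hence (by $\Y$-periodicity in that direction) independent of it; so $\psi(x,y)=\beta(x,y\cdot T)$ depends on $y$ only through $s:=y\cdot T$, and $g(x,y)=\partial_s^2\beta(x,s)=:\partial_s\alpha_T(x,s)$ with $\alpha_T:=\partial_s\beta$. The periodicity of $\psi$ on $\mathcal Y$ then translates exactly into $\alpha_T(x,\cdot)\in W^{1,2}_{T\text{-per}}(\R)$, and in particular it forces $s\mapsto\beta(x,s)$ to be genuinely periodic only when $T\in\mathcal S^1_*$; for irrational $T$ the only admissible $\beta$ is affine, giving $\alpha_T\equiv$ const and hence $g=0$.

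\textbf{Localizing to the cylindrical and conical decomposition.} The previous paragraph works pointwise at points where a single well-defined direction $T(x)$ governs the curvature. To get \eqref{eq:25} and part (a) I would localize using Lemma~\ref{L:Z}: decompose $S=C_{\nabla u}\cup Z_{\nabla u}\cup K_{\nabla u}\cup E$ up to the null set $E$, and note that on $C_{\nabla u}$ one has $\secf^\e$-contributions that are irrelevant since $\cZ=0$ there. On $Z_{\nabla u}=\bigcup_{T\in\mathcal S_{\nabla u}}\{\cts=1\}$, the direction field $N$ (hence $T=-N^\perp$) is \emph{locally constant in an $L^2$-density sense}, so on each piece $\{\cts=1\}\cap\mathcal S^1_*$ the fixed direction in the rank-one analysis is exactly that $T$, yielding the named functions $\alpha_T$ and the sum in \eqref{eq:25}; the different pieces are disjoint up to null sets by Lemma~\ref{L:Z}, so the sum is well-defined. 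On $K_{\nabla u}$ the crucial point is that, by the definition of conical points, for \emph{every} direction $T$ the density $\cts(x)=0$, which means no single direction has positive density there; running the same rank-one rigidity argument on any subset of positive measure where a direction would have to be selected yields a contradiction unless $g=0$, so $G=0$ a.e.\ on $K_{\nabla u}\times\mathcal Y$.

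\textbf{Main obstacle.} I expect the delicate step to be making the ``$T^\e\to T$ and the two-scale limit inherits the rank-one structure'' argument rigorous on the non-flat set, and in particular handling the interaction between the \emph{weak} two-scale convergence of $\secf^\e$ and the only-\emph{strong}-$L^2$ convergence of $T^\e\otimes T^\e$ near the boundary between flat and non-flat regions and near conical points, where $N$ is merely locally Lipschitz and can degenerate. Controlling that, and correctly translating $\Y$-periodicity of $\psi$ into the space $W^{1,2}_{T\text{-per}}(\R)$ (including the dichotomy rational/irrational $T$), is where the real work lies; the curl-constraint-to-Hessian step and the algebraic rank-one rigidity for Hessians are comparatively routine.
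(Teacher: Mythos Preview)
Your approach is correct and takes a genuinely different, more economical route than the paper. Both arguments need the rank-one structure of the two-scale limit on $S\setminus C_{\nabla u}$, i.e.\ $G(x,y)=g(x,y)\,T(x)\otimes T(x)$; this is exactly the content of Corollary~\ref{C:1}, and your ``$T^\e\to T$ strongly'' step is precisely that corollary (note that making it rigorous requires the uniform Lipschitz extension of the asymptotic directions in Lemma~\ref{L:ext} and Arzel\`a--Ascoli, not merely the $W^{1,2}$-convergence of $\nabla u^\e$). Where you diverge is in the second ingredient: the paper proves the key Lemma~\ref{L:main} by passing to line-of-curvature coordinates for \emph{each} $u^\e$ and running an oscillatory-integral / integration-by-parts argument in the asymptotic direction $N^\e$, which kills the Fourier modes $k$ with $N\cdot k\neq 0$. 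You instead use the \emph{linear} constraint $\secf^\e=\nabla^2\varphi^\e$ to get $G=\nabla_y^2\psi$ with $\psi\in L^2(S,W^{2,2}(\mathcal Y))$, and then observe that the two identities $\nabla_y^2\psi = g\,T\otimes T$ force, Fourier-mode by Fourier-mode, $\hat\psi_k(x)\,k\otimes k \parallel T(x)\otimes T(x)$, hence $\hat G_k(x)=0$ unless $k\parallel T(x)$. This reproduces the conclusion of Lemma~\ref{L:main} on $S\setminus C_{\nabla u}$ without ever touching line-of-curvature coordinates. The paper explicitly mentions $G=\nabla_y^2\psi$ just before Proposition~\ref{P:1} and calls it ``far from being optimal'' on its own; your point is that combined with Corollary~\ref{C:1} it \emph{is} optimal on the non-flat part, which is all Proposition~\ref{P:1} asserts.

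Two places where your write-up should be tightened. First, your conical argument is telegraphic: the clean statement is that for each fixed $T_0\in\mathcal S^1_*$ the set $\{x\in K_{\nabla u}:T(x)\parallel T_0\}=\{x\in K_{\nabla u}:\chi_{\nabla u,T_0}(x)=1\}$ is null (Lebesgue differentiation plus $\chi^*_{\nabla u,T_0}=0$ on $K_{\nabla u}$), and since $\mathcal S^1_*$ is countable, a.e.\ $x\in K_{\nabla u}$ has $T(x)\notin\mathcal S^1_*$; then your irrational-direction observation gives $G(x,\cdot)=0$. Second, the justification of $G=\nabla_y^2\psi$ deserves one line: curl-free rows in $y$ with zero $y$-mean on the torus give row-potentials $(\phi_1,\phi_2)$, and the symmetry of $G$ makes $(\phi_1,\phi_2)$ curl-free again, yielding $\psi$. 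What your route buys is a shorter proof that avoids Lemmas~\ref{L:main} and~\ref{L:loc}; what the paper's route buys is an argument that does not rely on the Hessian representation and is phrased purely in terms of the geometry of each $u^\e$, which is conceptually closer to the Monge--Amp\`ere viewpoint in Remark~\ref{R:MONGE}.
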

(The proof is postponed to the end of this section.)


\begin{remark}
  \label{sec:two-scale-limits}
  \begin{enumerate}
  \item In the proof of Theorem~\ref{T:1} the preceding proposition is used to establish the lower-bound
    part of the $\Gamma$-convergence statement. The proposition yields
    a characterization of the possible two-scale limits of $\secf^\e$.
    The characterization  on non-flat regions of $u$ is optimal. Yet,
    regarding flat regions,  Proposition~\ref{P:1} is partial, since
    there it does not
    yield any detailed information on the behavior of $G(x,y)$. Still, Proposition~\ref{P:1} is sufficient for identifying
    the $\Gamma$-limit in the proof of Theorem~\ref{T:1}.
  \item We would like to emphasize that on the right-hand side of
    \eqref{eq:25} only directions $T\in\mathcal S^1$ in rational
    directions appear. In particular, \eqref{eq:25} says that on the (possibly non-negligible)
    set
    \begin{equation*}
      \Big\{x\in S\,:\,{\sum_{T\in \mathcal S_{\nabla
            u}\setminus {\mathcal S}^1_*}\chi_{\nabla u,T}(x)}=1\Big\}\subset Z_{\nabla u}
    \end{equation*}
    the two-scale field $G$ vanishes. This  effect is due to the
    nature of two-scale
    convergence, which ``resolves'' only oscillations in rational
    directions and ``filters out'' oscillations in irrational
    directions. Let us remark that this behavior is beneficial for our
    purpose: Since the considered material is periodic, only oscillations adapted to
    the material's periodicity account for homogenization.
  \end{enumerate}
\end{remark}


The crucial observation in the argument of Proposition~\ref{P:1} is that in the situation of (LB), the possible
oscillations of $\secf^\e$ on the length scale $\e$ are restricted to
a very particular set, namely those parts of the domain where the asymptotic directions of the limit $u$ agree with the direction
of the oscillation. The following lemma expresses this fact on the
level of $G$. 
\color{black}

\newcommand{\ckt}{{\tilde\chi_k}}

\newcommand{\Ak}{{A_k}}


\newcommand{\Aked}{{A_k^{\e,\delta}}}
\newcommand{\cked}{{\chi_k^{\e,\delta}}}
\newcommand{\cknd}{{\chi_k^{0,\delta}}}
\newcommand{\Aknn}{{A_k^{0,0}}}
\newcommand{\cknn}{{\chi_k^{0,0}}}
\begin{lemma}
  \label{L:main}
  Suppose (LB), and let $N:S\setminus C_{\nabla u}\to\mathcal S^1$
  denote the Lipschitz field associated with $u$ via Proposition~\ref{P:L}. Then for
  every $k\in \Z^2\setminus\{0\}$ the function $f_k:S\to \R$ defined by
  \begin{eqnarray*}
    f_k(x)&:=& (1-\ckt(x))\int_Y G(x,y)\exp(2\pi\i k\cdot y)\d y,\\
    \ckt(x)&:=&
    \begin{cases}
      1 & \text{if }x\in C_{\nabla u}\text{ or if }x\in S\setminus C_{\nabla u} \text{ and
      }N(x)\cdot k=0\\0 & \text{else}
    \end{cases}
  \end{eqnarray*}
  is identically $0$ almost everywhere.
\end{lemma}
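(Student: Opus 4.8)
The goal is to show that on the non-asymptotic part of the domain, the $k$-th Fourier coefficient (in $y$) of the two-scale oscillation field $G$ vanishes. The natural strategy is to test the weak two-scale convergence $\secf^\e \wtto \secf + G$ against a carefully chosen oscillating test function, and to exploit the extra rigidity coming from the isometry constraint \eqref{ass:isom} to kill the contribution of $G$ wherever the oscillation direction $k$ is \emph{not} asymptotic for the limit $u$. Concretely, I would fix $k \in \Z^2 \setminus \{0\}$ and a smooth cutoff $\varphi \in C_0^\infty(S)$, and consider the test function $\psi^\e(x) = \varphi(x)\exp(2\pi\i k\cdot x/\e)$ (more precisely, real and imaginary parts, component by component, as in Definition~\ref{twodef}). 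By definition of weak two-scale convergence,
\[
\int_S \secf^\e(x)\,\varphi(x)\exp(2\pi\i k\cdot x/\e)\,dx \;\longrightarrow\; \int_{S\times Y}\big(\secf(x)+G(x,y)\big)\varphi(x)\exp(2\pi\i k\cdot y)\,dy\,dx,
\]
and since $k\neq 0$ the term with $\secf(x)$ integrates to zero in $y$, so the right-hand side is exactly $\int_S \varphi(x)\Big(\int_Y G(x,y)\exp(2\pi\i k\cdot y)\,dy\Big)dx$. Thus it suffices to show the left-hand side tends to zero after multiplication by $(1-\ckt)$, i.e. once we localize away from the flat part and away from the region where $k$ is asymptotic.

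**Using the isometry constraint.** The key is to replace $\secf^\e$ in the oscillatory integral by something that manifestly averages out. Here I would use the structure from Lemma~\ref{L:0}: $\secf^\e = \mu^\e\, T^\e\otimes T^\e$ with $T^\e = -(N^\e)^\perp$ on $S\setminus C_{\nabla u^\e}$, together with the differential constraints \eqref{L:1:3a}--\eqref{L:1:3b}, which say that $\secf^\e$ is a Hessian, $\secf^\e = \nabla^2\varphi^\e$. Testing $\nabla^2\varphi^\e$ against $\varphi(x)\exp(2\pi\i k\cdot x/\e)$ and integrating by parts twice moves two derivatives onto the test function; the dominant term is $-(2\pi)^2\e^{-2}(k\otimes k)\int_S \varphi^\e\,\varphi\,\exp(2\pi\i k\cdot x/\e)\,dx$, up to lower-order terms in $\e$. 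This shows that the oscillatory integral of $\secf^\e$ is, to leading order, proportional to $k\otimes k$ — that is, $\int_Y G(x,y)\exp(2\pi\i k\cdot y)\,dy$ is a multiple of $k\otimes k$ for a.e.\ $x$. On the other hand, from the two-scale limit of $\secf^\e = \mu^\e T^\e\otimes T^\e$ with $T^\e\to T$ strongly (by Lipschitz continuity of $N$ and Lemma~\ref{L:cont}, $T^\e \to T$ locally uniformly on $S\setminus C_{\nabla u}$, at least after refinement and on the region where the limiting field is non-degenerate), Lemma~\ref{L:2scale:1} gives that $G(x,y)$ lies in the direction $T(x)\otimes T(x)$ for a.e.\ $(x,y)$ on $S\setminus C_{\nabla u}$. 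Intersecting the two directional constraints: on $\{x\in S\setminus C_{\nabla u} : N(x)\cdot k \neq 0\}$ the vectors $k$ and $T(x) = -N(x)^\perp$ are not parallel, so $k\otimes k$ and $T(x)\otimes T(x)$ are linearly independent in $\R^{2\times 2}_{\sym}$, forcing $\int_Y G(x,y)\exp(2\pi\i k\cdot y)\,dy = 0$ there. Combined with the flat-part case (where one argues directly, or notes that on $C_{\nabla u}$ the limit $u$ is affine and a separate, simpler argument applies — and in any case $\ckt = 1$ there so the claim is vacuous), this yields $f_k \equiv 0$ a.e.

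**Main obstacle.** The delicate point is making the "replace $T^\e$ by $T$" step rigorous: $T^\e$ is only defined on $S\setminus C_{\nabla u^\e}$, and the flat sets $C_{\nabla u^\e}$ need not converge nicely to $C_{\nabla u}$; moreover $\mu^\e$ need only be $L^2$-bounded, so one cannot simply multiply strong and weak limits without care (this is exactly where Lemma~\ref{L:2scale:1} and Lemma~\ref{L:stto} are needed, and they require one factor to converge \emph{strongly} two-scale). I would handle this by localizing: cover the open set $S\setminus C_{\nabla u}$ by small balls on which, by the Lipschitz continuity of $N$ from Proposition~\ref{P:L}, $T(x)\otimes T(x)$ is close to a fixed rank-one matrix $T_0\otimes T_0$; show using weak $W^{2,2}$ convergence and Lemma~\ref{L:cont} that $\nabla u^\e\to\nabla u$ uniformly on compact subsets, hence $T^\e \otimes T^\e \to T\otimes T$ uniformly on such a ball (away from $C_{\nabla u}$), so that $T^\e\otimes T^\e$ converges \emph{strongly} two-scale to the $y$-independent limit $T\otimes T$; then write $\secf^\e = (\mu^\e)(T^\e\otimes T^\e)$ as a product of an $L^2$-bounded (hence weakly two-scale convergent, after refinement) scalar sequence and a strongly two-scale convergent matrix sequence, apply Lemma~\ref{L:2scale:1} against test functions of the form $\varphi(x)\exp(2\pi\i k\cdot y)$, and conclude that $G(x,y)$ — and therefore its $k$-th Fourier coefficient — is pointwise a.e.\ a scalar multiple of $T(x)\otimes T(x)$ on this ball. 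A countable covering argument then gives the statement on all of $S\setminus C_{\nabla u}$, which together with the Hessian-structure argument above (forcing proportionality to $k\otimes k$) completes the proof.
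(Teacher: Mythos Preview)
Your overall strategy is correct and takes a genuinely different route from the paper. You combine two structural facts: (i) since $\secf^\e=\nabla^2\varphi^\e$ with $\varphi^\e$ bounded in $W^{2,2}$, the standard two-scale result for Hessians (which the paper records at the beginning of Section~\ref{S:3} but does not use in its proof of Lemma~\ref{L:main}) gives $G(x,\cdot)=\nabla_y^2\psi(x,\cdot)$, so the $k$-th Fourier coefficient $\hat G_k(x)$ is a scalar multiple of $k\otimes k$; (ii) the rank-one structure $\secf^\e=\mu^\e\,T^\e\otimes T^\e$ together with strong convergence of $T^\e\otimes T^\e$ forces $G(x,y)$, hence $\hat G_k(x)$, to be a scalar multiple of $T(x)\otimes T(x)$ on $S\setminus C_{\nabla u}$. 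Since $k\otimes k$ and $T(x)\otimes T(x)$ are linearly independent precisely when $N(x)\cdot k\neq 0$, the conclusion follows by elementary linear algebra. The paper instead passes to line of curvature coordinates $\Phi^\e(t,s)=\Gamma^\e(t)+sN^\e(\Gamma^\e(t))$ for $u^\e$: on the set $\{|N^\e\cdot k|\geq\delta\}$ the oscillatory factor $\exp(2\pi\i k\cdot\Phi^\e(t,s)/\e)$ splits off a term $\exp(2\pi\i s\,(k\cdot N^\e)/\e)$ oscillating in $s$ on scale $\e/\delta$, and one integration by parts in $s$ gains a factor $\e/\delta$; sending $\e\downarrow 0$ then $\delta\downarrow 0$ yields the claim. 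Your argument is more conceptual and avoids the coordinate computation entirely.

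There is, however, a real gap in ingredient~(ii). You write that ``$\nabla u^\e\to\nabla u$ uniformly on compact subsets, hence $T^\e\otimes T^\e\to T\otimes T$ uniformly''. Neither step is justified: in two dimensions $W^{2,2}$ is the borderline case and weak $W^{2,2}$-convergence does not yield uniform convergence of gradients (Lemma~\ref{L:cont} gives continuity of each $\nabla u^\e$, not equicontinuity); and even granting uniform convergence of $\nabla u^\e$, the asymptotic direction $N^\e(x)$ is \emph{not} a pointwise function of $\nabla u^\e(x)$ --- it encodes along which direction $\nabla u^\e$ is constant, a nonlocal, second-order datum. Moreover $N^\e$ is only defined on $S\setminus C_{\nabla u^\e}$, and these flat sets need not behave well in the limit. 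The fix is precisely the paper's Lemma~\ref{L:ext}, which extends $N^\e$ Lipschitz-continuously across the flat region on any ball $B$ with $2B\subset S$, with Lipschitz constant $\leq 1/\operatorname{radius}(B)$ \emph{independent of $\e$}; Arzel\`a--Ascoli then gives uniform convergence of $N^\e\otimes N^\e$ along a subsequence, and Corollary~\ref{C:1} identifies the limit with $N\otimes N$ on $B\setminus C_{\nabla u}$. With this compactness in hand your product argument (weakly two-scale convergent $\mu^\e$ times strongly two-scale convergent $T^\e\otimes T^\e$) goes through via Lemma~\ref{L:2scale:1}, and the rest of your proof is clean.
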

(The proof is postponed to the end of this section.)
\medskip

The argument of this result makes use of several auxiliary lemmas,
that we state next. First, we need to extend the field $N$ of
asymptotic directions, see Proposition~\ref{P:L}, to the flat
region. We only require a local extension to balls away from the
boundary of $S$. This is the content of the upcoming Lemma
\ref{L:ext}, which -- despite being elementary -- plays a crucial role in our analysis.

\begin{lemma}
  \label{L:ext}
  Let $u\in W^{2,2}_{\iso}(S)$. Consider a ball $B$ with $2B\subset S$. Then there exists a Lipschitz continuous function
  $N:B\to\mathcal S^1$ such that for all $x,y\in B$:
  \begin{align}
    \label{L:L:1}
    &\nabla u\text{ is constant on }[x;N(x)]_B,\\
    \label{L:L:2}
    &[x;N(x)]_{2B}\cap[y;N(x)]_{2B}\neq\emptyset\;\Longrightarrow\;[x;N(x)]=[y;N(y)]
  \end{align}
  Moreover, we have
  \begin{equation}\label{L:ext:lip}
    \operatorname{Lip}(N)\leq\frac{1}{\operatorname{radius}(B)}.
  \end{equation}
\end{lemma}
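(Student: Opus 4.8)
\textbf{Proof plan for Lemma~\ref{L:ext}.} The plan is to patch together local pieces of the asymptotic-direction field provided by Proposition~\ref{P:L} across the flat set $C_{\nabla u}$, exploiting the convexity of line segments inside a ball. First I would fix a ball $B$ with $2B\subset S$ and, for each non-flat point $x\in B\setminus C_{\nabla u}$, transport the direction $N(x)$ along its (open) line segment $[x;N(x)]_{2B}$; by \eqref{P:L:2} distinct such segments (for non-parallel directions) are disjoint inside $2B$, so the collection $\{[x;N(x)]_{2B}:x\in B\setminus C_{\nabla u}\}$ is a family of pairwise disjoint chords of the ball $2B$. The complement of their union, intersected with $B$, is covered by the flat set $C_{\nabla u}$; on each connected component of $C_{\nabla u}\cap B$ the gradient $\nabla u$ is constant, and since this component is a convex open set whose closure meets $\partial(2B)$ in an arc (or is all of $B$), one can foliate it by parallel chords and thereby \emph{choose} a constant direction $N$ on it — any direction works for \eqref{L:L:1} since $\nabla u$ is globally constant there. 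The nontrivial requirement is that this choice be made consistently so that the resulting $N:B\to\mathcal S^1$ is Lipschitz with the stated constant.

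The key geometric input for the Lipschitz bound is the standard fact that two distinct chords of a disk of radius $R$ whose directions differ by an angle $\theta$, if they come within distance $d$ of each other (here $d=0$, since by \eqref{P:L:2}/\eqref{L:L:2} segments through nearby points that would cross must coincide), are forced apart: more precisely, if $x,y\in B$ lie on chords of $2B$ with directions $N(x),N(y)$, then the chords do not cross inside $2B$, and elementary trigonometry on the disk gives $|N(x)-N(y)|\le |x-y|/\operatorname{radius}(B)$. I would prove this by the contrapositive: if $|N(x)-N(y)|$ were larger, the two full chords of the radius-$2B$ disk through $x$ and $y$ (which are within distance $|x-y|\le 2\operatorname{radius}(B)$) would necessarily intersect somewhere in $2B$, forcing $[x;N(x)]=[y;N(y)]$ by \eqref{L:L:2} and hence $N(x)=\pm N(y)$ — and the sign ambiguity is eliminated by continuity of the lift, since $N$ is already Lipschitz (hence continuous) on the non-flat part by Proposition~\ref{P:L} and one extends it continuously across each flat component. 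This same estimate simultaneously shows that the extension across $C_{\nabla u}\cap B$ can indeed be chosen Lipschitz: a flat component is squeezed between neighboring chords whose directions are close, so assigning it (say) the direction of a bounding chord, or linearly interpolating the angle across it, respects the bound $1/\operatorname{radius}(B)$.

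Finally, property \eqref{L:L:1} holds by construction — on non-flat points it is inherited from \eqref{P:L:1}, and on flat points it is trivial — while \eqref{L:L:2} follows from \eqref{P:L:2} together with the disjointness/non-crossing of chords established above: if $[x;N(x)]_{2B}$ and $[y;N(x)]_{2B}$ meet, then since both are chords of $2B$ with the \emph{same} direction $N(x)$ that share a point, they lie on the same line, and one reads off $[x;N(x)]=[y;N(y)]$ from the fact that the common direction is asymptotic at both endpoints (using \eqref{P:L:2} when at least one of $x,y$ is non-flat, and the constancy of $\nabla u$ otherwise). The main obstacle I anticipate is the bookkeeping needed to make the extension across the flat region both well-defined (the flat components can be numerous, even of fat-Cantor type in the boundary sense, cf.~Remark~\ref{sec:sets-z_nabla-u}) and globally Lipschitz; the cleanest route is probably to \emph{not} extend component-by-component but instead to define $N$ on all of $B$ directly via a limiting/selection procedure from the chords and invoke the uniform non-crossing estimate to get continuity and then the Lipschitz constant in one stroke.
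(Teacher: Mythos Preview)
Your overall strategy --- extend the asymptotic-direction field across the flat set $C_{\nabla u}\cap B$ and derive the Lipschitz bound from the non-crossing of asymptotic lines in $2B$ --- is the same as the paper's. Your heuristic for the Lipschitz constant (two chords of $2B$ through $x,y\in B$ that do not meet in $2B$ must have directions within $|x-y|/\operatorname{radius}(B)$ of each other) is essentially the paper's Step~1, where the estimate is obtained by locating the intersection point $A$ of the two lines outside $2B$ and using $|x-A|\ge\operatorname{radius}(B)$.

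There are, however, two genuine gaps.

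\emph{First}, you do not establish the structural fact that underlies the whole construction: each connected component $U$ of $C_{\nabla u}\cap B$ has boundary $\partial U\cap B$ consisting of \emph{at most two} line segments, and the corresponding lines do not intersect in $2B$. This is the paper's Step~2 and is not automatic; it is proved by a careful angle-counting argument (three such segments would bound a triangle whose vertices lie outside $2B$, forcing the interior angles to sum to less than~$\pi$). Your phrase ``squeezed between neighboring chords'' and your assertion that flat components are convex both presuppose this fact. Without it, neither ``the direction of a bounding chord'' nor ``linearly interpolating the angle'' is well-defined.

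\emph{Second}, your proposed extensions do not work as stated. Assigning a \emph{constant} direction on a flat component $U$ with two non-parallel boundary directions $N_1,N_2$ produces a jump discontinuity at one of the two boundary segments, so the field is not even continuous. Linear interpolation of the angle would restore continuity, but then \eqref{L:L:2} (the non-crossing property, read with $N(y)$ in the second bracket as the paper uses it) is not obvious: the interpolated lines need not avoid each other inside $2B$. The paper's remedy is specific and elegant: when $N_1\nparallel N_2$, the lines $[x_1;N_1]$ and $[x_2;N_2]$ meet at a single point $A\notin 2B$, and one sets $\tilde N(y):=(A-y)/|A-y|$ on $U$. This radial field is automatically continuous up to $\partial U$, and all the lines $[y;\tilde N(y)]$ pass through the common point $A$ outside $2B$, so they cannot cross inside $2B$ --- which is exactly \eqref{L:L:2}. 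Your ``limiting/selection procedure'' hint does not substitute for this; making it precise would in any case require first proving the two-segment structure.
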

(The proof of Lemma~\ref{L:ext} is postponed to the end of this section.)
\medskip

Since the Lipschitz bound \eqref{L:ext:lip} only depends on the radius
of $B$, and in particular not on the isometry $u$, we get the following compactness result:

\begin{corollary}
  \label{C:1}
  Let $B$ denote a ball with $2B\subset S$. Consider a sequence
  $u^\e\in W^{2,2}_{\iso}(S)$ and let $N^\e:B\to\mathcal S^1$ denote the
  Lipschitz function associated with $u^\e$ via
  Lemma~\ref{L:ext}. Then there exists a Lipschitz function $\tilde
  N:B\to\mathcal S^1$ and $\tilde \mu\in L^2(B\times \mathcal Y)$ such that (up
  to subsequences)
  \begin{align}
    \label{C:1:1}
    N^\e\otimes N^\e&\,\to\, \tilde N\otimes \tilde N\qquad\text{uniformly
      in $B$,}\\
    \label{C:1:2}
    \secf^\e&\,\wtto\, \tilde\mu(x,y)\left(\,\tilde N^\perp(x)\otimes
      \tilde N^\perp(x)\,\right)\qquad\text{two-scale in $L^2(B\times
      \mathcal Y)$}.
  \end{align}
  Moreover, if $u^\e\wto
  u$ weakly in $W^{2,2}(S,\R^3)$ and $N:B\to\mathcal S^1$ is
  associated with $u$ via Lemma~\ref{L:ext}, then we have
  \begin{equation}
    \label{C:1:3}
    \tilde N\otimes \tilde N=N\otimes N\qquad\text{in $B\setminus C_{\nabla u}$}.
  \end{equation}
\end{corollary}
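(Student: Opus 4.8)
The plan is to prove the three assertions in turn: \eqref{C:1:1} by Arzel\`a--Ascoli applied to the equi-Lipschitz line fields $N^\e\otimes N^\e$; \eqref{C:1:2} by a routine two-scale argument based on Lemma~\ref{L:0} and Lemma~\ref{L:2scale:1}; and \eqref{C:1:3} by upgrading the weak $W^{2,2}$-convergence to uniform convergence of the gradients and then invoking the rigidity of asymptotic lines. Throughout I assume, as I may, that $(u^\e)$ is bounded in $W^{2,2}(S)$ (this is automatic in all applications of the corollary, in particular under (LB), and without such a bound the two-scale statements are void); since $\secf^\e_{ij}=\partial_i\partial_j u^\e\cdot n^\e$ with $|n^\e|=1$, this makes $\secf^\e$ bounded in $L^2(S)$.

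\emph{Part \eqref{C:1:1}.} By Lemma~\ref{L:ext} each $N^\e$ is Lipschitz on $B$ with $\operatorname{Lip}(N^\e)\le 1/\operatorname{radius}(B)$ independently of $\e$, so the maps $x\mapsto N^\e(x)\otimes N^\e(x)$ are uniformly bounded and equi-Lipschitz on $B$. By Arzel\`a--Ascoli, along a subsequence $N^\e\otimes N^\e\to P$ uniformly on $B$ with $P$ Lipschitz. As a uniform limit of rank-one orthogonal projections, each $P(x)$ is again a rank-one orthogonal projection (the conditions $P^t=P$, $P^2=P$, $\operatorname{tr}P=1$ pass to the limit), hence $P=\tilde N\otimes\tilde N$; since $B$ is simply connected, $\tilde N$ may be chosen as a Lipschitz unit vector field (locally $\tilde N=Pw/|Pw|$ for a fixed suitable $w$, then patched). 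Using $(a^\perp)\otimes(a^\perp)=|a|^2\,\id-a\otimes a$, this also yields $(N^\e)^\perp\otimes(N^\e)^\perp\to\tilde N^\perp\otimes\tilde N^\perp$ uniformly on $B$.

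\emph{Part \eqref{C:1:2}.} By Lemma~\ref{L:0} we may write, on $B$, $\secf^\e=\mu^\e\,(N^\e)^\perp\otimes(N^\e)^\perp$ with $\mu^\e\in L^2(B)$ and $\|\mu^\e\|_{L^2(B)}=\|\secf^\e\|_{L^2(B)}$ bounded (on $C_{\nabla u^\e}\cap B$ the second fundamental form vanishes, so there $\mu^\e$ may be taken to be $0$). Passing to a further subsequence, $\mu^\e\wtto\tilde\mu$ weakly two-scale in $L^2(B\times\mathcal Y)$ for some $\tilde\mu\in L^2(B\times\mathcal Y)$. Since $(N^\e)^\perp\otimes(N^\e)^\perp\to\tilde N^\perp\otimes\tilde N^\perp$ uniformly in $x$, this $y$-independent factor converges strongly two-scale, so Lemma~\ref{L:2scale:1} applies to the product $\secf^\e=\mu^\e\,[(N^\e)^\perp\otimes(N^\e)^\perp]$, tested against any $\Psi\in C_0^\infty(B\times\mathcal Y,\R^{2\times2})$, and gives $\secf^\e\wtto\tilde\mu(x,y)\,\bigl(\tilde N^\perp(x)\otimes\tilde N^\perp(x)\bigr)$ weakly two-scale in $L^2(B\times\mathcal Y)$, which is \eqref{C:1:2}.

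\emph{Part \eqref{C:1:3}} (the main point). Now assume in addition $u^\e\wto u$ weakly in $W^{2,2}(S,\R^3)$. Since $W^{2,2}(S)\hookrightarrow W^{1,p}(S)$ compactly for every $p<\infty$ and $W^{1,p}(B)\hookrightarrow C^{0,1-2/p}(\overline B)$ for $p>2$, the gradients $\nabla u^\e$ are equicontinuous on $\overline B$, while $\nabla u^\e\to\nabla u$ in $L^2$; hence $\nabla u^\e\to\nabla u$ uniformly on $\overline B$. Fix $x_0\in B\setminus C_{\nabla u}$. By Lemma~\ref{L:ext}, $\nabla u^\e$ is constant on the chord $[x_0;N^\e(x_0)]_B$; this chord is independent of the sign of $N^\e(x_0)$, and since $N^\e(x_0)\otimes N^\e(x_0)\to\tilde N(x_0)\otimes\tilde N(x_0)$ it converges, in Hausdorff distance, to $[x_0;\tilde N(x_0)]_B$. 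By the uniform convergence of $\nabla u^\e$, $\nabla u$ is constant on $[x_0;\tilde N(x_0)]_B$, i.e.\ $\tilde N(x_0)$ is an asymptotic direction for $u$ at the non-flat point $x_0$ in the sense of \eqref{eq:def:asym}. Since at a non-flat point the asymptotic direction is unique up to sign (part of the developability structure of $W^{2,2}$-isometries recorded in Proposition~\ref{P:L}), this forces $\tilde N(x_0)\otimes\tilde N(x_0)=N(x_0)\otimes N(x_0)$; as $x_0$ was arbitrary, \eqref{C:1:3} follows. The step I expect to be the real obstacle is exactly this last part: one must convert the merely weak $W^{2,2}$-convergence into \emph{uniform} convergence of $\nabla u^\e$ (this is where the isometry constraint enters, via the Morrey embedding producing equicontinuity), and one must use that ``being an asymptotic direction'' pins down $\tilde N\otimes\tilde N$ at non-flat points; Parts \eqref{C:1:1}--\eqref{C:1:2} are routine, the only delicate point there being the extraction of an honest Lipschitz unit vector field $\tilde N$ from the limiting projection field, which uses simple connectedness of $B$.
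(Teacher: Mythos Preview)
Your arguments for \eqref{C:1:1} and \eqref{C:1:2} are essentially the paper's, and are fine. The gap is in Part~\eqref{C:1:3}: the Sobolev chain you invoke does \emph{not} yield equicontinuity of $\nabla u^\e$. In two dimensions $W^{2,2}$ is the borderline case; the embeddings you quote compose to $W^{2,2}(S)\hookrightarrow W^{1,p}(S)\hookrightarrow C^{0,1-2/p}(\overline B)$, which gives H\"older continuity of $u^\e$, not of $\nabla u^\e$. To place $\nabla u^\e$ in $C^{0,\alpha}$ via Morrey you would need $\nabla u^\e\in W^{1,p}$ with $p>2$, i.e.\ $u^\e\in W^{2,p}$, which is not available. (Lemma~\ref{L:cont} does say $\nabla u$ is continuous for a single $W^{2,2}$-isometry, but that is a structural result for isometric immersions, not a Sobolev embedding, and you have not argued that it gives a \emph{uniform} modulus of continuity for the family $u^\e$.) Without uniform convergence of $\nabla u^\e$ on $\overline B$, your passage to the limit ``$\nabla u$ is constant on $[x_0;\tilde N(x_0)]_B$'' breaks down, since $L^2$-convergence cannot detect constancy on one-dimensional sets.

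The paper circumvents this entirely and uses only weak $L^2$-convergence of the second fundamental forms together with the uniform convergence from \eqref{C:1:1}. Since $\secf^\e=\mu^\e\,T^\e\otimes T^\e$ with $T^\e=-(N^\e)^\perp$, one has the pointwise orthogonality $\secf^\e:(N^\e\otimes N^\e)=0$. Testing against $\varphi\in L^2(B)$ and passing to the limit (weak $L^2$ for $\secf^\e$, uniform for $N^\e\otimes N^\e$) gives $\int_B(\secf:(\tilde N\otimes\tilde N))\varphi=0$, hence $\secf:(\tilde N\otimes\tilde N)=0$ a.e.\ in $B$. Writing $\secf=\mu\,T\otimes T$ this reads $\mu\,(T\cdot\tilde N)^2=0$; since $\mu\neq 0$ a.e.\ on $B\setminus C_{\nabla u}$, one concludes $T\perp\tilde N$ there, i.e.\ $\tilde N\otimes\tilde N=N\otimes N$ on $B\setminus C_{\nabla u}$. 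This avoids any appeal to uniform convergence of gradients.
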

(The proof of Corollary~\ref{C:1} is postponed to the end of this section.)
\medskip

The following is a standard construction of the so-called line of curvature
coordinates;  see e.~g.
  \cite{Hornung-11a} and \cite{Hornung-11b}.
\begin{lemma}
\label{L:loc}
Let $u\in W^{2,2}_\iso(S)$ and let $\secf$ denote its second
fundamental form. Let $B=B(x_0,R)$ denote a ball with $2B\subset S$,
and denote by $N:B\to{\mathcal S}^1$ the Lipschitz field associated
with $u$ according to Lemma~\ref{L:ext}.
\begin{itemize}
\item[(i)] 
  There exists a  function  \mbox{$\Gamma\in W^{2,\infty}([-R,R], B)$} with
  \begin{equation*}
    \Gamma(0)=x_0,\qquad
    (\Gamma)'(t)=-(N^\e)^\perp(\Gamma(t))\qquad\text{for all }t\in[-R,R], 
  \end{equation*}
  and additionally
  \begin{equation}\label{L:10:st1}
    \max_{t\in[-R,R]}|\kappa(t)|\leq\frac{1}{R}
  \end{equation}
where $\kappa(t):=\Gamma''(t)\cdot N(\Gamma(t))$.
  \item[(ii)]
  For $(t,s)\in Q:=(-\frac{R}{2},\frac{R}{2})^2$ define
  $\Phi(t,s):=\Gamma(t)+sN(\Gamma(t))$. Then the map
  $\Phi: Q\to\Phi(Q)$ is
  one-to-one and Lipschitz continuous with
  \begin{equation}\label{L:10:st2:1}
    \operatorname{Lip}(\Phi)\leq 2,\qquad \frac{1}{2}\leq \det\nabla\Phi\leq 2,
  \end{equation}
  and satisfies
  \begin{equation}\label{L:10:st2:2}
    \tfrac{1}{4}B\subset \Phi(Q)\subset S.
  \end{equation}
  Moreover,  there exists $\kappa_n\in L^2((-\frac{R}{2},-\frac{R}{2}))$ such that
  \begin{eqnarray}
    \label{L:5:1}
    \secf(\Phi(t,s))&=&\frac{\kappa_n(t)}{1-s\kappa(t)}\,\Gamma'(t)\otimes\Gamma'(t),\\
    \label{L:5:2}
    \secf(\Phi(t,s))|\det\nabla\Phi(t,s)|&=&\kappa_n(t)\,\Gamma'(t)\otimes \Gamma'(t),
      \end{eqnarray}
  almost everywhere in $Q$.
\end{itemize}

\end{lemma}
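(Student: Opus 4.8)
The plan is to build the curve $\Gamma$ as the integral curve of the Lipschitz vector field $-N^\perp$ and then straighten the family of asymptotic lines through $\Gamma$ into the coordinate grid $(t,s)\mapsto\Gamma(t)+sN(\Gamma(t))$. For part~(i), since $N:B\to\mathcal S^1$ is Lipschitz by Lemma~\ref{L:ext}, the vector field $x\mapsto -N^\perp(x)$ is Lipschitz on $B$, so the ODE $\Gamma'(t)=-N^\perp(\Gamma(t))$, $\Gamma(0)=x_0$, has a unique $C^{1,1}$ solution by Picard--Lindel\"of; it stays in $B$ at least for $|t|\le R$ because $|\Gamma'|=1$ and $B=B(x_0,R)$. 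Differentiating once more, $\Gamma''(t)=-(DN^\perp)(\Gamma(t))\Gamma'(t)$ exists a.e.\ and is bounded by $\operatorname{Lip}(N)\le 1/R$, which immediately gives the curvature bound $|\kappa(t)|=|\Gamma''(t)\cdot N(\Gamma(t))|\le 1/R$. (One also notes $\Gamma''\cdot\Gamma'=0$ since $|\Gamma'|\equiv1$, so $\Gamma''$ is parallel to $N$ and $\kappa$ captures its full magnitude.)

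For part~(ii), the key point is that $\Phi$ is injective on $Q=(-\tfrac R2,\tfrac R2)^2$. This follows from property~\eqref{L:L:2} in Lemma~\ref{L:ext}: if $\Phi(t_1,s_1)=\Phi(t_2,s_2)$ then the asymptotic lines through $\Gamma(t_1)$ and $\Gamma(t_2)$ in direction $N(\Gamma(t_i))$ intersect, hence coincide as lines, so $\Gamma(t_2)$ lies on $[\Gamma(t_1);N(\Gamma(t_1))]$; but $\Gamma$ is an arclength curve transverse to $N$ (its velocity is $-N^\perp$), so it meets each such line only once, forcing $t_1=t_2$ and then $s_1=s_2$. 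The Jacobian is computed directly: $\partial_t\Phi=\Gamma'(t)+s\,\frac{d}{dt}N(\Gamma(t))$ and $\partial_s\Phi=N(\Gamma(t))$; using $\frac{d}{dt}N(\Gamma(t))\cdot N=0$ (as $|N|\equiv 1$) one gets $\det\nabla\Phi=\Gamma'(t)\cdot N(\Gamma(t))^\perp$-type expression equal to $1-s\kappa(t)$ up to sign, and since $|s|<R/2$ and $|\kappa|\le 1/R$ we obtain $\tfrac12\le|\det\nabla\Phi|\le\tfrac32\le 2$; the Lipschitz bound $\operatorname{Lip}(\Phi)\le 2$ comes from $|\partial_t\Phi|\le 1+|s|\operatorname{Lip}(N)\le 3/2$ and $|\partial_s\Phi|=1$. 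The inclusions $\tfrac14 B\subset\Phi(Q)\subset S$ follow from the non-degeneracy of $\nabla\Phi$ together with $2B\subset S$: $\Phi$ maps $Q$ into the $R$-neighborhood of $x_0$, hence into $2B\subset S$, and by the inverse function theorem (in its Lipschitz/degree form, or by a direct covering argument using the Jacobian lower bound) the image contains a ball of radius comparable to $R/4$ around $x_0$.

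Finally, formulas \eqref{L:5:1}--\eqref{L:5:2} come from transporting the representation $\secf=\mu\,T\otimes T$ with $T=-N^\perp$ (Lemma~\ref{L:0}) through $\Phi$, using that $\nabla u$, and hence $\secf$, is constant along each asymptotic segment $s\mapsto\Phi(t,s)$: along such a segment the only freedom is the prefactor, and matching the contracted Codazzi-type identities \eqref{L:1:3a}--\eqref{L:1:3b} (equivalently, that $\secf$ is a Hessian) forces the $s$-dependence to be exactly $\frac{1}{1-s\kappa(t)}$, with $\kappa_n(t):=\kappa_n$ the value of $\mu$ along the curve $\Gamma$ itself; the second identity is then \eqref{L:5:1} multiplied by $|\det\nabla\Phi|=|1-s\kappa(t)|$. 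I expect the main obstacle to be the low regularity: $N$ is only Lipschitz and $\mu\in L^2$, so $\Gamma\in W^{2,\infty}$ but $\kappa_n$ is merely $L^2$, and one must justify the pointwise-a.e.\ formula \eqref{L:5:1} either by an approximation argument (mollifying $u^\e$ as in Lemma~\ref{L:0}'s source \cite{Hornung-11a}) or by working with the change-of-variables formula for $L^2$ functions under the bi-Lipschitz map $\Phi$; this is exactly the kind of technical bookkeeping that is standard in the line-of-curvature literature cited, so I would invoke \cite{Hornung-11a,Hornung-11b} for the detailed verification of \eqref{L:5:1}--\eqref{L:5:2} rather than reprove it.
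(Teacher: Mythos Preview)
Your proposal is correct and follows essentially the same route as the paper: Picard--Lindel\"of for the curve with the curvature bound coming from $\operatorname{Lip}(N)\le 1/R$, a direct computation of $\nabla\Phi$ giving $\det\nabla\Phi=1-s\kappa(t)\in[\tfrac12,\tfrac32]$, injectivity via property~\eqref{L:L:2}, and a deferral to \cite{Hornung-11a,Hornung-11b} for the inclusion $\tfrac14 B\subset\Phi(Q)$ and the identity~\eqref{L:5:1}. One small slip: $\nabla u$ is constant along each asymptotic segment, but $\secf$ is \emph{not}---only its direction $T\otimes T$ is, while the prefactor varies precisely as $1/(1-s\kappa(t))$; since you immediately invoke the Codazzi relations and then cite Hornung anyway, this does not affect the argument.
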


(The proof of Lemma~\ref{L:loc} is postponed to the end of this
section.)
\medskip

After these preparations, we can start with the proofs of
Lemma~\ref{L:main} and Proposition \ref{P:1} in earnest. 

\begin{proof}[Proof of Lemma~\ref{L:main}]
Let $\tilde B$ be a ball such that $2 \tilde B\subset S$.
We will show that 
\begin{equation}
  \label{eq:20}
  f_k=0\qquad\text{a.e.~in }\tilde B.
\end{equation}
Since
$S$ can be covered by countably many of such balls, this proves the
claim of the lemma. 

We denote by $N^\e:\tilde B\to\mathcal S^1$ the Lipschitz function  associated
with $u^\e$ according to Lemma~\ref{L:ext}. Thanks to
Corollary~\ref{C:1} we may assume (by possibly passing to a
subsequence) that there exists a Lipschitz field $N^0:\tilde
B\to\mathcal S^1$ such that $N^\e\otimes N^\e\to N^0\otimes N^0$
uniformly in  $\tilde B$ as $\e\downarrow
0$. 
\medskip

\step 1 Decomposition of the domain.
\smallskip

For $\e\geq 0$ and $\delta> 0$ define the sets
\begin{eqnarray*}
  \Aked&:=&\{\,x\in {\tilde B}\,:\,|N^\e(x)\cdot k|< \delta\,\},\\
  \Aknn&:=&\{\,x\in {\tilde B}\,:\,N^0(x)\cdot k=0\}.
\end{eqnarray*}
We write $\cked$ for the  characteristic
function associated to $\Aked$. 
Note that 
\begin{equation}
  \label{eq:5}
  \begin{split}
    \cked\to & \cknd \text{ pointwise  as } \e\dto 0\, ,\\
    \cknd\to &\cknn\text{ pointwise as }\delta\dto 0\,.
  \end{split}
\end{equation}
The former is just a consequence of the uniform convergence
$N^\e\otimes N^\e\to N^0\otimes N^0$, and
the latter is obvious from the definitions.

Recall that $N:S\setminus C_{\nabla u}\to\mathcal S^1$
denotes the vector field associated with $u$ via
Proposition~\ref{P:L}. By \eqref{C:1:3} we have $N^0||N$ on  $\tilde B\setminus C_{\nabla
  u}$. Hence, in the definition of
$\ckt$ we may replace $N$ by $N^0$, so that $\Aknn\subset  \tilde
B\cap \{\ckt=1\}$. Consequently,  for \eqref{eq:20}, it suffices to show that the
  function
  \begin{equation*}
    \tilde f_k(x):=(1-\cknn(x))\int_Y G(x,y)\exp(2\pi\i k\cdot
    y)\d y
  \end{equation*}
is identically $0$ almost everywhere in $\tilde B$.
To show the latter, it is enough to prove $\int_{B}\tilde f_k(x)\d
x=0$ for every ball $B$ satisfying $4B\subset \tilde B$, since $\tilde B$
can be finely covered by  such
balls.\\
 From now on let $B$ be such a ball. As a consequence of \eqref{eq:5} and Lemma \ref{L:stto}, we have 
\begin{equation}
  \label{eq:3}
  \cked (x) \exp\Big(\frac{2\pi\i k\cdot x}{\e}\Big)\stto \cknd(x)\exp(2\pi\i k\cdot y) \quad \text{ in }L^2(B\times \Y)\quad\text{
    as }\e\dto 0\,.
\end{equation}
In combination with Lemma \ref{L:2scale:1}, 
and since $\int_Y\exp(2\pi \i k\cdot y)\,dy=0$ we get
\begin{eqnarray*}
  &&\lim_{\e\dto 0}\int_B\cked(x)\secf^\e(x)\exp\Big(\frac{2\pi\i k\cdot x}{\e}\Big)\d x\\
  &=&
  \int_{B\times Y}\cknd(x)(\secf(x)+G(x,y))\exp(2\pi\i k\cdot y)\d x \d y\\
  &=&
  \int_{B\times Y}\cknd(x)G(x,y)\exp(2\pi\i k\cdot y)\d x \d y.
\end{eqnarray*}

Also, for any function $f\in L^1(S)$, we have by the continuity of the integral
\[
\cknd f\to \cknn f\quad  \text{ in }L^1(S) \quad\text{ as }\delta\dto 0\,.
\]
Hence,
\begin{equation}
  \begin{split}
    \int_B\tilde f_k=&\int_{B\times Y}(1-\cknn(x))G(x,y)\exp(2\pi \i k\cdot y)\d x\d y\\
    =&\lim_{\delta\dto 0}\lim_{\e\dto
      0}\int_{B}(1-\cked(x))\secf^\e(x)\exp\Big(\frac{2\pi\i k\cdot x}{\e}\Big)\d
    x\,.\label{eq:4}
  \end{split}
\end{equation}

\step 2 Conclusion.

In view of Step~1, in order to conclude the proof we only need to
prove: for any $\delta>0$ we have
\begin{equation}
\label{eq:maineq}
\lim_{\e\dto 0}\int_B (1-\cked(x) )\secf^\e(x)\exp\Big(\frac{2\pi\i k\cdot x}{\e}\Big)\d x=0\,.
\end{equation}
In the argument we make use of the line of
curvature coordinates: An application of  Lemma \ref{L:loc} to  $u^\e$ yields a chart 
\begin{equation*}
  Q:=(-2R,2R),\qquad 
  \Phi^\e: Q\to S,\qquad
  \Phi^\e(t,s):=\Gamma^\e(t)+sN^\e(\Gamma(t))
\end{equation*}
such that $B\subset \Phi^\e(Q)\subset S$. For brevity we set $N^\e(t)=N^\e(\Gamma^\e(t))$, $T^\e(t)=-N^\e(t)^\bot$, and write
\begin{eqnarray*}
  \chi_{B}^\e(t,s):=
  \begin{cases}
    1&\Phi^\e(t,s)\in B,\\
    0&\text{else,}
  \end{cases}\qquad\text{and}\qquad \rho^{\e,\delta}(t,s):=1-\cked(\Phi^\e(t,s))
\end{eqnarray*}
 for the indicator functions of $B$ and the complement of $\Aked$ in the new
 coordinates. With this notation the associated change of coordinates reads
\begin{multline*}
  \int_{B}(1-\cked )\secf^{\e}(x)\exp\Big(\frac{2\pi\i k\cdot x}{\e}\Big)\d x\\
  =\int_{Q}\chi_B^\e(t,s)\rho^{\e,\delta}(t,s)\secf^\e(\Phi^\e(s,t))\exp\Big(\frac{2\pi\i
    k\cdot \Phi^\e(t,s)}{\e}\Big)|\det\nabla\Phi^\e(s,t)|\d s\d t.
\end{multline*}
Using the definition of $\Phi^\e$ and \eqref{L:5:2} the right-hand
side simplifies to
\[
\begin{split}
  \int_{Q}\chi_B^\e(t,s)\rho^{\e,\delta}(t,s)\kappa_n^\e(t)T^\e(t)\otimes T^\e(t) \exp\left(\frac{2\pi i
      k\cdot\Gamma^\e(t)}{\e}\right)\exp\left(s\frac{2\pi i k\cdot N^\e(t)
      }{\e}\right)\d s\d t\,.
\end{split}
\]
Since the field of asymptotic directions $N^\e$  only
depends on $t$ (in the new coordinates), it follows from the definition of $\Aked$ that
$\rho^{\e,\delta}(t,s)=\rho^{\e,\delta}(t)$ does not depend on $s$.
Hence, we get
\begin{equation*}
  \int_{B}(1-\cked )\secf^{\e}(x)\exp\Big(\frac{2\pi\i k\cdot x}{\e}\Big)\d x
  =\int_{Q}\chi_B^\e(t,s)f^\e(t)\partial_sG^\e(t,s)\d s\d t,
\end{equation*}
where
\[
\begin{split}
  f_\e(t,s)=& \kappa_n^\e(t)T^\e(t)\otimes T^\e(t)\exp\left(\frac{2\pi i
      k\cdot\Gamma^\e(t)}{\e}\right),\\
  G_\e(s,t) =&\rho^{\e,\delta}(t)\frac{\e}{2\pi\i N^\e(t)\cdot k}\exp\left(s\frac{2\pi i k\cdot N^\e
      (t)}{\e}\right).
\end{split}
\]
Note that $G^\e$ is well-defined, since $|N^\e\cdot k|^{-1}\leq
\delta^{-1}$, whenever $\rho^{\e,\delta}$ is non-zero.
Clearly, for \eqref{eq:maineq} it suffices to prove
\begin{equation}
  \lim\limits_{\e\downarrow 0}\int_{Q}\chi_B^\e(t,s)f^\e(t)\partial_sG^\e(t,s)\d s\d t=0.\label{eq:29b}
\end{equation}
To that end, we first claim that for all $t\in(-2R,2R)$:
\begin{equation}\label{eq:9}
  \left|\int_{-2R}^{2R}\chi_B^\e(t,s)\partial_sG^\e(t,s)\,ds\right|\leq 4\frac{\e}{\delta}.
\end{equation}
Indeed, since $B$ is convex, and
$s\mapsto\Phi^\e(t,s)$ is linear, we deduce that
$s\mapsto\chi^\e_B(t,s)$ is the indicator function of an open
(possibly empty) interval, say $(s_1^\e(t),s_2^\e(t))\subset
(-2R,2R)$. Hence, an integration by parts yields
\begin{equation*}
  \left|\int_{-2R}^{2R}\chi_B^\e(t,s)\partial_sG^\e(t,s)\,ds\right|
  =
  \left|\int_{s_1^\e(t)}^{s_2^\e(t)}\partial_sG^\e(t,s)\,ds\right|
  \leq 2\|G^\e\|_{L^\infty(Q)}\leq 4\frac{\e}{\delta},
\end{equation*}
which proves \eqref{eq:9}.
By Fubini's theorem and the triangle inequality, we have
\begin{equation*}
|\int_{Q}\chi_B^\e(t,s)f^\e(t)\partial_sG^\e(t,s)\d s\d t|\leq\int_{-2R}^{2R}|f^\e(t)|\left|\int_{-2R}^{2R}\chi_B^\e(t,s)\partial_sG^\e(t,s)\,ds\right|\,\d t.
\end{equation*}
To complete the proof it remains to argue
that $\int_{-2R}^{2R}|f^\e(t)|\,\d t$ is uniformly bounded  in $\e$. Here comes the argument:
\begin{eqnarray*}
  \int_{-2R}^{2R}|f^\e(t)|\,\d t&=&\frac{1}{4R}\int_{Q}|\kappa_n^\e(t)T^\e(t)\otimes T^\e(t)|\,\d t\d
  s\\
  &\stackrel{\eqref{L:5:2}}{=}&
  \frac{1}{4R}\int_{Q}|\secf^\e(\Phi^\e(t,s))||\det\nabla\Phi^\e(t,s)|\,\d t\d
s\\
  &=&
  \frac{1}{4R}\int_{\Phi^{\e}(Q)}|\secf^\e(x)|\d x\stackrel{\Phi^\e(Q)\subset
    S}{\leq}\frac{1}{4R}\int_{S}|\secf^\e(x)|\d x.
\end{eqnarray*}
Since $\secf^\e$ weakly converges in $L^2(S)$ as $\e\downarrow 0$, we
deduce that the right-hand side is uniformly bounded in $\e$.

\end{proof}

\begin{remark}\label{R:MONGE}
  As a consequence of \eqref{L:1:3a} and \eqref{L:1:3b} we may
  represent the second fundamental form $\secf$ of an arbitrary 
  $W^{2,2}$-isometry as $\secf=\nabla^2\varphi$ where $\varphi\in
  W^{2,2}(S)$ is a scalar function that solves the degenerate Monge-Amp\`ere equation
  \begin{equation}\label{eq:monge}
    \det\nabla^2\varphi=0.
  \end{equation}
  Above $\nabla^2\varphi$ denotes the Hessian of $\varphi$. As in \cite{Pakzad-04}, Proposition~\ref{P:L}
  can be reformulated for scalar functions that belong to the non-convex space
  \begin{equation*}
\mathcal A:=\{\,\varphi\in W^{2,2}(S)\,:\,\det\nabla^2\varphi=0\,\}.
  \end{equation*}
  Without much effort we recover the result of Lemma~\ref{L:main} on the level of
  the functions $\varphi\in\mathcal A$; i.~e. the following statement:
  Consider a sequence $\varphi^\e\in W^{2,2}(S)$ of solutions to
  \eqref{eq:monge} and assume that $\varphi^\e$ weakly converges to
  some $\varphi$ in $W^{2,2}(S)$, and $\nabla^2\varphi^\e$ converges weakly
  two-scale to $\nabla^2\varphi+G$ in $L^2(S\times \mathcal Y)$.  If  the limit $\varphi$ is locally  equal to an affine
  function, i.~e. for some open set $O\ni x_0$, $A\in\R^2$ and $a\in\R$ we have
  \begin{equation*}
    \int_O|\varphi(x)-(A\cdot x+a)|^2\,dx>0\,,
  \end{equation*}
we write $x_0\in C_{\nabla\varphi}$. For $k\in \Z^2\setminus\{0\}$, we define
\[
\Ak:=\{x\in S\setminus C_{\nabla\varphi}:\,\nabla^2\varphi(x):k^\bot\otimes k^\bot=0\}\,,
\]
write $\ck$ for the associated characteristic function, and set $\ckt=\ck+\chi_{C_{\nabla\varphi}}$.
  Then for every $k\in\Z^2\setminus\{0\}$, the function
\[
x\mapsto(1-\ckt(x))\int_Y G(x,y)\exp(2\pi\i k\cdot y)\d y
\]
is 0 almost everywhere.\\
  Rephrased in that form, it is apparent that Lemma~\ref{L:main} entails
  a characterization of two-scale limits under the \emph{nonlinear
    differential constraint} \eqref{eq:monge}.  Note that the interplay of
  two-scale convergence and \emph{linear} differential constraints is
  reasonably well understood, see e.g. \cite{Fonseca-Kroemer-10} for general results in that direction. In contrast, to our
  knowledge our result is the first treatment of a \emph{nonlinear}
  differential constraint.
\end{remark}

We are now ready for the proof of Proposition
\ref{P:1}.

\begin{proof}[Proof of Proposition~\ref{P:1}]

\step 1 Argument for (a).
\medskip

  Since $\secf^\e\wto\secf$ in $L^2(S)$, we have
  \begin{equation}
    \label{eq:36}
    \int_Y G(x,y)dy=0 \text{ for a.e. }x\in S\,. 
  \end{equation}
  Recalling the definition of $\ckt$ from Lemma \ref{L:main},  we have $\ckt(x)=0$ for all
  $k\in \Z^2\setminus\{0\}$ and
  almost every $x\in K_{\nabla u}$.
  Hence, by the conclusion of that lemma and \eqref{eq:36}, we have
  $\int_YG(x,y)\exp(2\pi \i k\cdot y)\d y=0$ for almost every~$x\in K_{\nabla u}$ and
  every $k\in\Z^2$. This implies $y\mapsto G(x,y)$ is
  identical to $0$ in $L^2(\Y)$ for almost every $x\in K_{\nabla u}$, which
  yields the claim.
\medskip

\step 2 Argument for (b).
\smallskip

Since rational directions $T\in\mathcal S^1_*$ play a special role in our argument, set $\mathcal
S_{\nabla u,*}:=\mathcal S_{\nabla u}\cap\mathcal S^1_*$. Let $B$ denote a ball with $2B\subset S$. Since $S$ can be covered by countably many of such balls, it suffices to prove identity
  \eqref{eq:25} for almost every $(x,y)\in B\times\mathcal Y$.
  Furthermore,   thanks to \eqref{L:Z:3}, it suffices to show that for
  all $T\in\mathcal S_{\nabla u, *}$ there exists $\alpha_{T}\in
  L^2(B,W^{1,2}_{T\text{-per}}(\R))$ such that
  \begin{equation}
    \label{eq:15}
    \cZ(x)G(x,y)=\sum_{T\in\mathcal S_{\nabla u, *}}\chi_{\nabla
      u,T}(x)\partial_s\alpha_T(x,T\cdot y)(T\otimes T)\qquad\text{for
      a.e. }(x,y)\in B\times\mathcal Y.
  \end{equation}
  From now on all identities hold for almost every $(x,y)\in
  B\times\mathcal Y$ or for almost every $x\in B$, respectively.

  We start our argument for \eqref{eq:15} with an application of
  Corollary~\ref{C:1}: By \eqref{C:1:2} and \eqref{C:1:3} there
  exists $\tilde\mu\in L^2(B\times\mathcal Y)$ such that
  \begin{equation*}
    \chi_{\nabla u}(x)\Big(\secf(x)+G(x,y)\Big)=\chi_{\nabla u}(x)\tilde\mu(x,y)(N^\perp(x)\otimes N^\perp(x)).
  \end{equation*}
  Due to the definition of $\chi_{\nabla u,T}$ and by \eqref{L:Z:3} we find that
  \begin{equation*}
    \cZ(x)G(x,y)=\sum_{T\in\mathcal S_{\nabla u}}\chi_{\nabla u,T}(x)\mu(x,y)(T\otimes
    T),
  \end{equation*}
  where $\mu(x,y):=\tilde\mu(x,y)-\int_Y\tilde\mu(x,y)\,dy$. Hence, in
  order to deduce \eqref{eq:15} we only need to show that
  \begin{equation}
    \label{eq:13}
    \chi_{\nabla u,T}(x)\mu(x,y)=
    \begin{cases}
          \chi_{\nabla u,T}(x)\partial_s\alpha_T(x,T\cdot y)&\text{if
          }T\in\mathcal S_{\nabla u,*},\\
          0&\text{if }T\in\mathcal S_{\nabla u}\setminus\mathcal
          S_{\nabla u,*}.
    \end{cases}
  \end{equation}
  Here comes the argument. First, we represent $\mu(x,y)$ via a Fourier-series  w.~r.~t. $y$:   \begin{equation}\label{eq:29}
    \mu(x,y)=\sum_{k\in\Z^2}a_k(x)\exp(2\pi ik\cdot y)\qquad\text{for
      some }a\in
  L^2(B,\ell^2(\Z^2)).
  \end{equation}
  From $\int_Y\mu(x,y)\,dy=0$ we deduce that $a_0=0$.  Now recall the
  definition of $\tilde\chi_k$ from Lemma~\ref{L:main} and note that
  for all
  \begin{equation}\label{eq:18}
    k\in\Z^2\setminus\{0\}\text{ with }k^\perp\cdot T\neq 0,
  \end{equation}
  we have 
  \begin{equation}\label{eq:17}
    \chi_{\nabla u,T}=(1-\tilde\chi_k)\qquad\text{a.e.~in }B.
  \end{equation}
  Hence, an application of Lemma~\ref{L:main} shows that for all $k$
  satisfying \eqref{eq:18} we have
  \begin{equation}\label{eq:16}
\chi_{\nabla u,T}(x)\int_Y\mu(x,y)\exp(2\pi ik\cdot
    y)\,dy=0,
  \end{equation}
  and thus $\chi_{\nabla u,T}a_{-k}=0$. If $T\in\mathcal
  S_{\nabla u}\setminus\mathcal S_{\nabla u, *}$, then \eqref{eq:18} is satisfied for every
  $k\in\Z^2\setminus\{0\}$ and \eqref{eq:13} follows. It remains to
  consider the case $T\in\mathcal S_{\nabla u, *}$. From \eqref{eq:29} --
  \eqref{eq:16} we learn that
  \begin{eqnarray*}
    \chi_{\nabla u,T}(x)\mu(x,y) &=&    \chi_{\nabla
      u,T}(x)\sum_{k\in\Z^2\setminus\{0\}\atop k|| T}a_k(x)\exp(2\pi
    ik\cdot y)\\
    &=& \chi_{\nabla u,T}(x)\partial_s\alpha_T(x,T\cdot y),
  \end{eqnarray*}
  where $\alpha_T\in L^2(B,W^{1,2}(S))$ is given explicitly by
  \begin{equation*}
    \alpha_T(x,s):=\chi_{\nabla
      u,T}(x)\sum_{k\in\Z^2\setminus\{0\}\atop k||
      T}\frac{a_k(x)}{2\pi i(k\cdot T)}\exp(2\pi
    i (k\cdot T)s).
  \end{equation*}
   Thanks to the elementary identity
   \begin{equation*}
     (k\cdot T)(s+k'\cdot T)=(k\cdot T)s+k\cdot k'\in (k\cdot T)s+\Z,
   \end{equation*}
   which holds for all $s\in\R$, $k\in\Z^2\setminus\{0\}$ with $k||T$,
   and $k'\in\Z^2$, we deduce that $\alpha_T(x,s)$ satisfies the
   required periodicity property in $s$, i.e.  $\alpha_T\in L^2(B,
   W^{1,2}_{T\text{-per}}(\R))$. This completes the argument for
   \eqref{eq:13}, and the proof of the proposition.

\end{proof}

Finally, we present the proofs of the auxiliary results, Lemma \ref{L:ext} and
Corollary \ref{C:1}.

\begin{proof}[Proof of Lemma~\ref{L:ext}]
  \step 1  We claim that it suffices to construct a vector field $\tilde N:B\to\mathcal
S^1$ that satisfies \eqref{L:L:1} and \eqref{L:L:2} (with $N$ replaced
by $\tilde N$) such that $F:B\to\R^{2\times 2}$, $F(x):=(\tilde
N(x)\otimes\tilde N(x))$ is continuous. Here comes the argument: Since $B$ is simply connected, there exists a continuous vector field $N:B\to\mathcal S^1$ with $F=N\otimes N$. Hence, it remains to check that $N$ satisfies \eqref{L:ext:lip}. To that end let $x,y\in B$. We need to show that
\begin{equation}\label{eq:-2}
  |N(x)-N(y)|\leq\frac{1}{\text{radius}(B)}|x-y|.
\end{equation}
We distinguish the following cases:
\begin{itemize}
\item If either $[x,N(x)]=[y,N(y)]$ or
  $[x,N(x)]\cap[y,N(y)]=\emptyset$, then $N(x)$ and $N(y)$ must be
  parallel. We argue that $N(x)=N(y)$, which means that \eqref{eq:-2}
  is trivially fulfilled. Indeed, if this were not the case, then
  $N(x)$ and $N(y)$ would be in different connected components of $\mathcal S^1\setminus\{\pm(x-y)/|x-y|\}$. By the continuity of $N$ and the fact that $[x,y]$ -- the line segment  connecting $x$ and $y$ -- is contained in $B$, there
  would have to exist $z\in[x,y]\setminus\{x,y\}$ such that $N(z)\in\{\pm (x-y)/|x-y|\}$,
  and thus $[z;N(z)]_B\cap [x,N(x)]_B=\{x\}\neq \emptyset$ in contradiction to
  eq.~\eqref{L:L:2}. 
\item If $[x,N(x)]\neq [y,N(y)]$ and $[x,N(x)]\cap[y,N(y)]\neq \emptyset$, then the lines intersect in some point $A\in\R^2$. By elementary geometry and by appealing to the continuity of $N$ as in the argument above, we deduce that
  \begin{equation*}
    \begin{array}{rll}\text{either }&N(x)=\frac{x-A}{|x-A|}\,,&N(y)=\frac{y-A}{|y-A|}\,\\
      \text{or }&N(x)=-\frac{x-A}{|x-A|}\,,&N(y)=-\frac{y-A}{|y-A|}\,\,.
    \end{array}
  \end{equation*}
  By \eqref{L:L:2} we necessarily have $A\not\in 2B$, so that (assuming without loss of generality that
  $|x-A|\leq |y-A|$)
  \begin{align*}
    |N(x)-N(y)|\leq &\left|N(x)-\frac{|y-A|}{|x-A|}N(y)\right|\\
    \leq & \frac{1}{|x-A|}|x-A-y+A|\\
    \leq & \frac{1}{\text{radius}(B)}|x-y|.
  \end{align*}
\end{itemize}

\step 2 Structure of the connected components of $C_{\nabla u}\cap B$. 
\smallskip

\noindent
Let $U$ be a connected component of $ C_{\nabla u}\cap B$. We claim that the boundary of $U$ in $B$ can be written as the
union of at most 2 disjoint line segments, and the corresponding lines do not intersect
in $2B$, that is: there exists $k\in\{0,1,2\}$, and  $x_i\in B$, $N_i\in \mathcal
S^1$ for $1\leq i\leq k$, 
 such that 
 \begin{align}
\partial U \cap B=&\bigcup_{i=1}^k [x_i;N_i]_B\,,\label{eq:23}\\
[x_i;N_i]_{2B}\cap [x_j;N_j]_{2B}=&\emptyset \text{ for }i\neq j.\label{eq:24}
\end{align}
We first define some notation that we are going to use in the argument. For  distinct
points $A,C\in\R^2$, let $\overline{AC}$ denote the line $\{A+t(C-A):t\in
\R\}$ and let $\overrightarrow{AC}$ denote the half line
$\{A+t(C-A):t\in [0,\infty)\}$. For pairwise distinct points $A,C,D\in\R^2$, let
$\angle ACD$ denote the smaller angle enclosed by the half lines
$\overrightarrow{CA}$ and $\overrightarrow{CD}$. We adopt the convention that
all such angles are positive. Let the center of $B$ be denoted by $O$. 

Now, notice that the boundary of $U$ in $B$ has to be the union of open disjoint
line segments since this is  true for the boundary of $C_{\nabla
  u}$ in $B$ by Proposition \ref{P:L}. Furthermore, the corresponding lines do
not intersect in $2B$. This proves eqs.~\eqref{eq:23} and \eqref{eq:24} for some
$k\in \N$, and it remains to show that $k\leq 2$.\\
Assume the contrary.
Then there exist three lines $L_1,L_2,L_3$ such that  (cf.~Figure \ref{fig:linecirc})
\begin{itemize}
\item $L_i\cap L_j\cap 2B=\emptyset$ for $i\neq j$
\item $L_i\cap B\neq \emptyset$ for $i=1,2,3$
\item $\bigcup_{i=1}^3L_i\cap B\subset \partial U$
\end{itemize}

\begin{figure}
\begin{center}
\includegraphics[height=5cm]{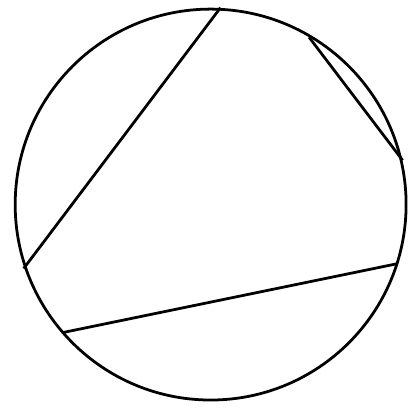}
\caption{Three line segments contained in $\partial U$.}
\label{fig:linecirc}
\end{center}
\end{figure}

Let  $m_i$, $i\in\{1,2,3\}$ be the midpoints of $L_i\cap B$.
Since $U$ is connected, either the $L_i$, $i=1,2,3$ enclose a triangle $\triangle\subset \R^2$ or two of the
lines are parallel and the third is not. \\ 
In the first case,  let $A_i$ be the  corner
of the triangle that is opposite to the side containing $m_i$, see Figure
\ref{Am}. 
\begin{figure}
\begin{center}
\includegraphics[height=5cm]{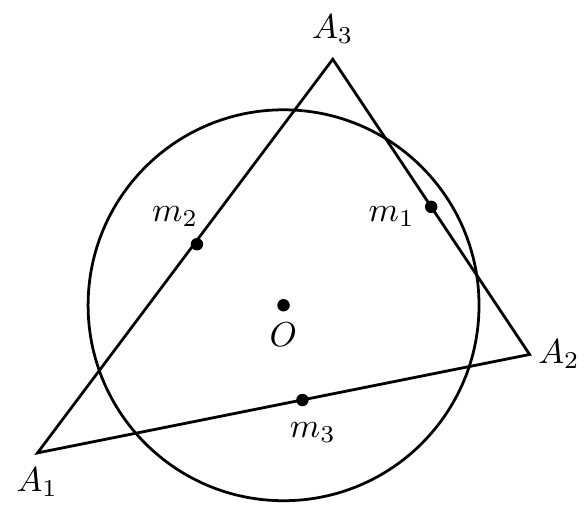}
\caption{The triangle $\triangle$ containing the line segments, and the ball
  $B$.}
\label{Am}
\end{center}
\end{figure}
Let $i,j\in\{1,2,3\}$, $i\neq j$. 
Since $m_j$ is the midpoint of
$L_j\cap B$, the line $\overline{Om_j}$ is orthogonal to $L_j$, 
and (see Figure \ref{orttri})
\[
\sin \left( \angle OA_im_j\right)=|m_j-O|/|A_i-O|<1/2\,.
\]
The latter estimate holds since $m_j\in B$ and
$A_i\not\in 2B$ by assumption. Hence the enclosed angle is less than
$\pi/6$. This is true for all pairs $i\neq j$. If $(i,j,k)$ is
some permutation of $(1,2,3)$, then
\[
\angle m_iA_jm_k\leq \angle OA_jm_i +\angle OA_jm_k\,.
\]
(Inequality  occurs if $O$ is outside $\triangle$.)
The contradiction is  obtained by using the fact that the sum of the angles
in $\triangle$ is equal to $\pi$,
\[
\pi=\angle m_1A_2m_3+\angle m_2A_1m_3+\angle m_3A_2m_1<\pi\,.
\]

\begin{figure}
\begin{center}
 \includegraphics{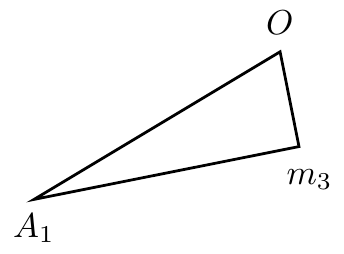}
 
 \caption{The Sine of the angle enclosed by $\protect\overrightarrow{A_1O}$
 and
   $\protect\overrightarrow{A_1m_3}$ is given by $|m_3-O|/|A_1-O|$. This ratio is smaller than
   $1/2$ since $m_3\in B$ and $A_1\not\in 2B$. Thus the angle is  smaller than $\pi/6$.}
 \label{orttri}
\end{center}
\end{figure}

In the case that two lines, say $L_1$ and $L_2$, are parallel,
let $m_i,\,i\in\{1,2,3\}$ be as
before, $A_1$ the point where $L_2$ and $L_3$ intersect, and $A_2$ the point
where $L_1$ and $L_3$ intersect,  see Figure
\ref{para}.

\begin{figure}
\begin{center}
\includegraphics{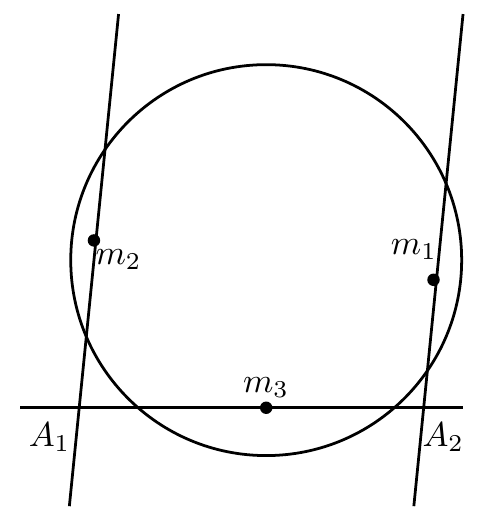}
\caption{The case of parallel line segments contained in $\partial U$.}
\label{para}
\end{center}
\end{figure}

 With the same reasoning as
before, the angles $\angle m_1A_2m_3$, $\angle m_3A_1m_2$ are both smaller than
$\pi/3$. Since $L_1$ and $L_2$ are parallel, the sum of these angles
has to be $\pi$, which produces the contradiction, and finishes the
proof of \eqref{eq:23} and \eqref{eq:24} with $k\leq 2$.\\

\step 3 Conclusion: Construction of $\tilde N$.
\smallskip

\noindent
By Step~1, to complete the proof we only need to construct a vector
field $\tilde N: B\to\mathcal S^1$ that satisfies \eqref{L:L:1} and
\eqref{L:L:2} such that $F=\tilde N\otimes\tilde N$ is continuous on
$B$. In the trivial case $C_{\nabla u}=B$ we simply set $\tilde
N=e_1$. Suppose now that $C_{\nabla u}\neq B$. We define $\tilde N$ on
$B\setminus C_{\nabla u}$ via Proposition~\ref{P:L}. The thus defined
$F=\tilde N\otimes\tilde N$ is continuous on $B\setminus C_{\nabla u}$
and $\tilde N$ satisfies \eqref{L:L:1} and \eqref{L:L:2} for $x,y\in
B\setminus  C_{\nabla u}$. On the remainder $B\cap C_{\nabla u}$ we define $\tilde N$ on each connected component $U$ separately as described next. Note that on $U$ \eqref{L:L:1} is trivially fulfilled.  Since $U\neq B$, by Step~2 the boundary $\partial U\cap B$ consists of one or two connected components. If $\partial U\cap B=[x_1;N_1]_B$ for some $x_1\in B$ and
$N_1\in\mathcal S^1$,  we set $\tilde N=N_1$ on $U$. If $\partial U\cap B=[x_1;N_1]_B\cup[x_2;N_2]_B$ for some $x_1,x_2\in B$ and
$N_1,N_2\in\mathcal S^1$, we distinguish two cases:
\begin{itemize}
\item if $N_1$ and $N_2$ are not parallel, then there exists a unique 
$A\in [x_1;N_1]\cap[x_2;N_2]$ and we set $\tilde N(y):=(A-y)/|A-y|$ for $y\in U$;
\item if $N_1$ and $N_2$ are parallel, then we set $\tilde N=N_1$.
\end{itemize}
The  thus defined vector field $\tilde N:B\to\mathcal S^1$ satisfies
\eqref{L:L:1} and \eqref{L:L:2} by construction. 
By step 1, it remains to show that $F=\tilde N\otimes \tilde N$ is continuous. We parallel 
the argument from step 1: If either $[x,\tilde N(x)]=[y,\tilde N(y)]$ or
$[x,\tilde N(x)]\cap[y,\tilde N(y)]=\emptyset$ then $F(x)=F(y)$. Otherwise, the
lines $[x,\tilde N(x)]$, $[y,\tilde N(y)]$ intersect in exactly one point
$A\in\R^2$, and 
\[
\begin{split}
  F(x)= &\frac{x-A}{|x-A|}\otimes \frac{x-A}{|x-A|}\\
  F(y)= &\frac{y-A}{|y-A|}\otimes \frac{y-A}{|y-A|}\,.
\end{split}
\]
By $A\not\in 2B$, $|F(x)-F(y)|\leq \frac{2}{\text{radius}\, B}|x-y|$, which
proves the continuity of $F$.
\end{proof}

\begin{proof}[Proof of Corollary~\ref{C:1}]
  \step 1 Argument for \eqref{C:1:1}.\smallskip

  \noindent  
  Since $N^\e$ is a vector field of unit vectors,
  and since $\operatorname{Lip}(N^\e)$ is bounded uniformly in $\e>0$,
  the sequence $N^\e$ is bounded in $W^{1,\infty}(B,\R^2)$.
  Hence, $N^\e\stackrel{*}{\wto} \tilde N$   weakly-star in
  $W^{1,\infty}$, up to a subsequence (that we do not relabel), and
  $\tilde N\in W^{1,\infty}(B,\R^2)$. Since
  $W^{1,\infty}(B,\R^2)$ is compactly embedded into the H\"older
  spaces $C^{0,\alpha}(B,\R^2)$, $0\leq\alpha<1$, the convergence
  holds uniformly and we deduce that $\tilde N(x)\in\mathcal S^1$
  almost everywhere. 
  \medskip

  \step 2 Argument for \eqref{C:1:2}.\smallskip

  \noindent
  Set $T^\e(x):=-(N^\e(x))^{\perp}$. By \eqref{L:1:2} we have
  \begin{equation}\label{eq:2}
    \secf^\e(x)=\mu^\e(x)\,T^\e(x)\otimes
    T^\e(x)\,\qquad\text{for some } \mu^\e\in L^2(B).
  \end{equation}
  The sequence $\mu^\e$ is bounded in $L^2(B)$. Hence,
  we can pass (to a further) subsequence with $\mu^\e\wtto\tilde\mu(x,y)$ two-scale in
  $L^2(B\times\Y)$. Combined with the uniform convergence
  $N^\e\to\tilde N$, \eqref{C:1:2} follows via Lemma~\ref{L:stto}.
  \medskip

  \step 3 Argument for \eqref{C:1:3}.
  \smallskip

  \noindent
  For convenience set
  $T:=-N^{\perp}$. Note that \eqref{eq:2} remains valid when the superscript  $\e$
  is dropped. By assumption we have $u^\e\wto u$ in $W^{2,2}$, and thus $\secf^\e\wto \secf$ weakly
  in $L^2(S,\R^{2\times 2})$. Since $N^\e\otimes N^\e\to \tilde
  N\otimes\tilde N$ uniformly in $B$ we obtain
  \begin{equation*}
    \int_B\big(\secf:(\tilde N\otimes\tilde
    N)\big)\varphi\,dx=\lim\limits_{\e\downarrow 0}\int_B\big(\secf^\e:( N^\e\otimes
     N^\e)\big)\varphi\,dx,
  \end{equation*}
  for all $\varphi\in L^2(B)$.   By orthogonality, the right-hand side
  vanishes, and thus
  \begin{equation}\label{eq:8}
    0=\int_B\big(\secf:(\tilde N\otimes\tilde
    N)\big)\varphi\,dx.
  \end{equation}
  The combination of identity \eqref{eq:2} (with the superscript $\e$ dropped) and \eqref{eq:8} (with
  $\varphi=\mu$) yields
  \begin{equation*}
    0=\int_{B}|\mu|^2(T\otimes T):(\tilde N\otimes\tilde
    N)\,dx=\int_{B}|\mu|^2|T\cdot\tilde N|^2\,dx.
  \end{equation*}
  Since $|\mu|^2>0$ almost everywhere in $B\setminus C_{\nabla
    u}$, the previous identity implies that $\tilde N$ and $T$ are
  orthogonal in that region, and
  thus, by the continuity of $\tilde N$ and $N=T^\perp$, we obtain
  \eqref{C:1:3}.
\end{proof}

\begin{proof}[Proof of Lemma \ref{L:loc}]
(i) We will write $N(t):=N(\Gamma(t))$.
The existence and regularity of the curve $\Gamma$ follows from a
  standard fix point argument. Since $\Gamma'(t)=-N^\perp(\Gamma(t))$ is a unit vector, we
  deduce that $\Gamma''(t)$ is orthogonal to $\Gamma'(t)$ and thus
  parallel to $N(t)$. Hence, there  exists an $L^2$ function $\kappa(t)$ such that $\Gamma''(t)=\kappa(t)N(t)$.
  We have for almost every $t$
  \begin{equation*}
    |\kappa(t)|=|\Gamma''(t)|=|\nabla
    N(\Gamma(t))\Gamma'(t)|\leq|\nabla N(\Gamma(t))|\leq
    \operatorname{Lip}(N).
  \end{equation*}
  The estimate $\operatorname{Lip}(N)\leq \frac{1}{R}$ (cf.
  \eqref{L:ext:lip}) completes the argument.\\
(ii) 
Let $(t,s),(t',s')\in Q$. Then
\begin{align*}
|\Phi(t,s)-\Phi(t',s')|\leq & |\Phi(t,s)-\Phi(t',s)|+|\Phi(t',s)-\Phi(t',s')|\\
\leq & |\Gamma(t)-\Gamma(t')|+|N(t)-N(t')||s|+|N(t')||s-s'|\\
\leq & |t-t'|+R^{-1}|t-t'|R/2+|s-s'|\\
\leq & 2|(t,s)-(t',s')|\,.
\end{align*}
This proves the first estimate in \eqref{L:10:st2:1}.
Hence, \eqref{L:L:2} implies that $\Phi$ is one-to-one. A direct calculation yields
  \begin{equation*}
    \nabla\Phi=\Big(\Gamma'(t),\;N(t)\Big)\Big(\id-s\kappa(t)e_1\otimes e_1\Big),
  \end{equation*} 
   Since $(\Gamma', N)$ is a rotation, and $|s\kappa(t)|\leq
  \frac{1}{2}$ by eq.~\eqref{L:10:st1}, we get
\begin{equation}
\frac{1}{2}\leq\det\nabla\Phi=1-s\kappa(t)\leq 2\,.\label{detphiestim}
\end{equation} 
This completes the proof of eq.~\eqref{L:10:st2:1}.
\smallskip

\noindent

A proof of the inclusion \eqref{L:10:st2:2} can be found in \cite[Remark~5]{Hornung-11a}.
For \eqref{L:5:1},  see e.~g. \cite[Proposition~1]{Hornung-11b}. Identity \eqref{L:5:2}
  follows from \eqref{L:5:1} combined with
  $|\det\nabla\Phi|=1-s\kappa(t)$.
\end{proof}

\section{Proof of Theorem~\ref{T:1}}
\subsection{Proof of Theorem~\ref{T:1} (a) \& (b) -- compactness and
  lower bound}

\begin{proof}[Proof of statement (a) -- compactness] 
In view of the coercivity assumption (Q3) and Poincar\'e's inequality, any sequence $u^\e$ with finite energy and mean zero is bounded in $W^{2,2}(S,\R^3)$. Hence, the statement follows from the observation that $W^{2,2}_{\iso}(S)$ is closed under weak convergence in
$W^{2,2}(S,\R^3)$.
\end{proof}

\begin{proof}[Proof of statement (b) -- lower bound] 
By the compactness statement (a), we may assume
without loss of generality that $u^\e,u\in W^{2,2}_{\iso}(S)$ and
\begin{alignat}{2}
  \label{P:T1:1}
  u^\e&\wto u&\qquad&\text{weakly in }W^{2,2}(S,\R^3),\\
  \label{P:T1:2}
  \secf^\e&\wto \secf&\qquad&\text{weakly in }L^{2}(S,\R^{2\times 2}),\\
  \label{P:T1:3}
  \secf^\e&\wtto \secf+G&&\text{weakly two-scale in }L^2(S\times\mathcal
  Y,\R^{2\times 2}),
\end{alignat}
where $\secf^\e$ and $\secf$ denote the second fundamental forms
associated with $u^\e$ and $u$, and $G(x,y)$ is a function in
$L^2(S\times\mathcal Y,\R^{2\times 2})$. By Lemma~\ref{L:twoscale-lsc} (a)
we have
\begin{eqnarray*}
  \lim\inf\limits_{\e\downarrow 0}\int_S
  Q(\frac{x}{\e},\secf^\e(x))\,dx &\geq& \int_{S\times
    Y}Q(y,\secf(x)+G(x,y))\,dydx.
\end{eqnarray*}
Hence, it suffices to show that
\begin{eqnarray}
  \label{eq:21}
  \int_{S\times Y}(1-\cZ(x))Q(y,\secf(x)+G(x,y))\,dydx&\geq&
  \int_{S}(1-\cZ(x))Q_{\av}(\secf(x))\,dx,\\
  \label{eq:22}
  \int_{S\times Y}\cZ(x)Q(y,\secf(x)+G(x,y))\,dydx&\geq&
  \int_{S}\cZ(x)Q_{\hom}(\secf(x))\,dx.
\end{eqnarray}
We start with \eqref{eq:21}. By \eqref{L:Z:1} we have $\{\cZ=0\}\subset C_{\nabla
  u}\cup K_{\nabla u}\cup E$ for some null set $E$. An application of Proposition~\ref{P:1} shows
that $G=0$ almost everywhere on $K_{\nabla
  u}\times\mathcal Y$, so that
\begin{eqnarray*}
  \text{[LHS of \eqref{eq:21}]}
  &\geq&
  \int_{S\setminus C_{\nabla
      u}}(1-\cZ(x))\left(\int_{Y}Q(y,\secf(x))\,dy\right)\,dx\\
  &=&
  \int_{S}(1-\cZ(x))Q_{\av}(\secf(x))\,dx.
\end{eqnarray*}
For the last identity we used that $Q_{\av}(\secf(x))=0$ almost everywhere in $C_{\nabla u}$.

It remains to prove \eqref{eq:22}. Let $\mathcal S_{\nabla
  u}\subset\mathcal S^1$ denote the set from Lemma~\ref{L:Z}, and recall that $\mathcal S_{\nabla u}$ is at
most countable. From \eqref{L:Z:3}, Fubini's theorem, and the
fact that the functions $\chi_{\nabla u,T}$ are $\{0,1\}$-valued, we deduce that
\begin{eqnarray}\notag
  &&\int_{S\times Y}\cZ(x)Q\Big(y,\secf(x)+G(x,y)\Big)\,dydx\\
  \label{eq:37}
  &=&
  \sum_{T\in\mathcal S_{\nabla u}}\int_{S}\chi_{\nabla u,T}(x)\left(\int_YQ\Big(y,\secf(x)+G(x,y)\Big)\,dy\right)\,dx.
\end{eqnarray}
From \eqref{L:1:2} and Proposition~\ref{P:1} (b), we deduce that there
exists $\mu\in L^2(S)$ and for all $T\in\mathcal S_{\nabla
  u}\cap\mathcal S^1_*$ a function $\alpha_T\in
L^2(S,W^{1,2}_{T\text{-per}}(\R))$ such that for almost every $(x,y)\in
S\times\mathcal Y$:
\begin{equation*}
  \chi_{\nabla u,T}(x)(\secf(x)+G(x,y))=
  \chi_{\nabla u,T}(x)\left\{
    \begin{aligned}
      &\mu(x)(T\otimes T)&&\text{if }T\in\mathcal S_{\nabla
        u}\setminus\mathcal S^1_*,\\
      &(\mu(x)+\partial_s\alpha_{\H{T}}(x,T\cdot y))(T\otimes T)&&\text{if }T\in\mathcal S_{\nabla
        u}\cap\mathcal S^1_*.
    \end{aligned}
  \right.  
\end{equation*}
Hence, in view of the definition of $Q_{\hom}(T\otimes T)$, see \eqref{def:hom-formula},
we have for all $T\in\mathcal S_{\nabla u}\cap\mathcal S^1_*$ and
almost every $x\in S$:
\begin{eqnarray*}
  \chi_{\nabla u,T}(x)\int_YQ\Big(y,(\secf(x)+G(x,y))\Big)\,dy&\geq&
  \chi_{\nabla u,T}(x)\mu^2(x)Q_{\hom}(T\otimes T)\\
  &=&\chi_{\nabla u,T}(x)Q_{\hom}(\mu(x)T\otimes T)\\
  &=&\chi_{\nabla u,T}(x)Q_{\hom}(\secf(x)),
\end{eqnarray*}
and similarly, for all $T\in\mathcal S_{\nabla u}\setminus\mathcal S^1_*$ and
almost every $x\in S$:
\begin{eqnarray*}
  \chi_{\nabla
    u,T}(x)\int_YQ\Big(y,(\secf(x)+G(x,y))\Big)\,dy&\geq&\chi_{\nabla
    u,T}(x)Q_{\av}(\secf(x))\\
  &\stackrel{T\not\in\mathcal S^1_*}{=}&\chi_{\nabla
    u,T}(x)Q_{\hom}(\secf(x)).
\end{eqnarray*}
Combined with \eqref{eq:37}, the claimed inequality \eqref{eq:22} follows.

\end{proof}

\subsection{Proof of Theorem~\ref{T:1} (c) - construction of recovery
  sequences}

The construction of the recovery sequence consists of two parts. In
the first part, which is the heart of the matter, we locally modify $u$ in order to recover the
oscillatory effects of homogenization. This is done on what we call ``patches'',
i.e. ``regular'' subdomains on which $u$ can conveniently be described by line of curvature
coordinates, see Definition~\ref{D:patch}. In a second part we apply an
approximation scheme due to \cite{Pakzad-04}, \cite{Hornung-11a} and
\cite{Hornung-11b}. In these works the approximation of
Sobolev isometries by smooth isometries is discussed, and as a central
step it is shown that any Sobolev isometry can be approximated by
isometries whose gradients are \textit{finitely developable}, see below for the
precise definition.

For the  definition of a ``patch'', we introduce (as in \cite{Pakzad-04}) for $u\in
W^{2,2}_{\iso}(S)$ the set $\hat C_{\nabla u}\subset C_{\nabla u}$ as the union of all connected components
$U\subset C_{\nabla u}$ with the property that $\partial U\cap S$
consists of more than two connected components. In \cite{Pakzad-04} it
is shown that the field of asymptotic directions $N$ can be
extended to $S\setminus\hat C_{\nabla u}$.

This will not quite be enough for our purposes, since we wish to
consider affine boundary conditions posed on a line segment.
In order to treat the
boundary condition \eqref{ass:BC}, we need the following variant of
this statement:

\begin{lemma}
  \label{L:RS:1}
  Let $u\in W^{2,2}_{\iso}(S)$. Then there exists a  locally Lipschitz
  continuous vector field  $N:S\setminus\hat  C_{\nabla u}\to\mathcal S^1$ such that \eqref{P:L:1} and
  \eqref{P:L:2} hold for all $x,y\in S\setminus\hat C_{\nabla u}$.
  Moreover, if $u$ satisfies \eqref{ass:BC}, then we can chose $N$
  such that 
  \begin{equation}
    \label{L:LB:1:1}
    \left\{\begin{aligned}
      &\text{either $L_{BC}\subset\hat C_{\nabla u}$},\\
      &\text{or  $L_{BC}=[x,N(x)]_S$ for some $x\in S\setminus\hat C_{\nabla u}$}.
    \end{aligned}\right.
  \end{equation}
\end{lemma}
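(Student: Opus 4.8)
The first assertion --- existence of a locally Lipschitz field $N:S\setminus\hat C_{\nabla u}\to\mathcal S^1$ satisfying \eqref{P:L:1} and \eqref{P:L:2} --- is the extension result of \cite{Pakzad-04}, so the plan is to take it for granted and to concentrate on the refinement \eqref{L:LB:1:1} under the hypothesis \eqref{ass:BC}. Write $\ell_{BC}$ for the line containing $L_{BC}$ and $N_{BC}$ for its direction. The key preliminary observation is that, $\nabla\varphi_{BC}$ being constant, \eqref{ass:BC} forces $\nabla u$ to be constant along the relatively open segment $L_{BC}$; hence $N_{BC}$ is an asymptotic direction (in the sense of \eqref{eq:def:asym}) at \emph{every} point of $L_{BC}$, and, $S$ being convex, $\ell_{BC}\cap S$ is connected, so that $[x;N_{BC}]_S=L_{BC}$ for every $x\in L_{BC}$.

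Then I would distinguish three cases. If $L_{BC}\cap(S\setminus C_{\nabla u})\neq\emptyset$, pick such an $x$; then $x\in S\setminus\hat C_{\nabla u}$, and since the asymptotic direction at a non-flat point is unique up to sign (Proposition~\ref{P:L}) while $N_{BC}$ is an asymptotic direction at $x$, we get $N(x)=\pm N_{BC}$, whence $[x;N(x)]_S=L_{BC}$: this is the second alternative of \eqref{L:LB:1:1}, with $N$ left unchanged. Otherwise $L_{BC}\subset C_{\nabla u}$, and, being connected, $L_{BC}$ lies in a single connected component $U$ of $C_{\nabla u}$. If $U\subset\hat C_{\nabla u}$, then $L_{BC}\subset\hat C_{\nabla u}$ and the first alternative holds. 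The only remaining --- and genuinely new --- case is $L_{BC}\subset U$ with $U\notin\hat C_{\nabla u}$; here $\partial U\cap S$ consists of at most two connected components, each a full chord of $S$ lying on a line $\ell_i$, and I would keep $N$ equal to Pakzad's field outside $U$ and redefine it on $U$ so as to make $L_{BC}$ one of its leaves.

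For the redefinition I would first record that, $\nabla u$ being continuous (Lemma~\ref{L:cont}) and constant on $U$, the other side of each chord $\ell_i\cap S$ is non-flat, so that $U$ coincides with the intersection of $S$ with the open half-plane(s) bounded by these chords; in particular $U$ is convex and $\nabla u\equiv\nabla\varphi_{BC}$ on $\overline U\cap S$, so \eqref{P:L:1} holds automatically for any chord contained in $\overline U$. The chord $L_{BC}$ splits the convex set $U$ into convex subregions; on a subregion not meeting $\partial U\cap S$ one foliates by chords parallel to $L_{BC}$, and on a subregion trapped between $L_{BC}$ and a boundary chord $\ell_i\cap S$ one foliates by the rays issuing from the point $p_i:=\ell_{BC}\cap\ell_i$ when $L_{BC}$ and $\ell_i$ are not parallel --- note $p_i\notin S$, since $p_i\in\ell_{BC}\cap S=L_{BC}\subset U$ would contradict $U$ being contained in an open half-plane with boundary $\ell_i$ --- while if $L_{BC}\parallel\ell_i$ one simply keeps Pakzad's leaves parallel to $\ell_i$ there. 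One then checks that the patched field on $S\setminus\hat C_{\nabla u}$ is locally Lipschitz, matches $N$ continuously across $\partial U\cap S$ (the one-sided limits both equal the boundary directions $N_i$), has $L_{BC}$ among its leaves, and still satisfies \eqref{P:L:1}--\eqref{P:L:2}, including the ``mixed'' instances of \eqref{P:L:2} for a pair $x\in U$, $y\in S\setminus U$ (for which one uses that no leaf emanating from $U$ reaches $\partial U\cap S$).

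The only real obstacle is precisely this last verification: confirming \eqref{P:L:2} leaf by leaf for the patched field and --- more tedious than hard --- running through the sub-configurations of $\partial U\cap S$ (one chord; two parallel chords; two non-parallel chords, lying on the same side of $L_{BC}$ or on opposite sides), each handled by the same ``parallel leaves / rays from an exterior point'' recipe but requiring its own piece of elementary plane geometry.
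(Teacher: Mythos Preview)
Your proposal is correct and follows essentially the same approach as the paper. The paper's proof is very brief --- it refers back to Step~3 of the proof of Lemma~\ref{L:ext} and simply says: on each connected component $U$ of $C_{\nabla u}\setminus\hat C_{\nabla u}$ containing $L_{BC}$, set $N=\bar N$ on $L_{BC}$, split $U$ into the two components of $U\setminus L_{BC}$, and on each piece re-run the ``rays from the intersection point / parallel leaves'' construction between $L_{BC}$ and the adjacent boundary chord. This is exactly your redefinition; you have merely unpacked the sub-cases (one vs.\ two boundary chords, parallel vs.\ non-parallel, and the mixed instances of \eqref{P:L:2}) that the paper leaves implicit.
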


The proof of this and the following lemmas is postponed to the end of
this section.

\begin{remark}
As stated in Remark 2 under Proposition 1 of \cite{Hornung-11a}, the choice of
the vector field $N:S\setminus \hat C_{\nabla u}\to {\mathcal S}^1$ is non-unique. The lemma above
makes a particular choice. The results of \cite{Hornung-11a} do not depend on
the choice of this vector field, cf.~again the remark just mentioned. In
particular, in the statement of Theorem \ref{thm:horn2} below, we may assume
that $N$ is the vector field constructed in Lemma \ref{L:RS:1}.
\end{remark}

\begin{definition}
  \label{D:patch}
  We call an open set $V\subset
  S\setminus\hat C_{\nabla u}$ a \textbf{patch} for $(u,N)$, if it can be parametrized by a single line of curvature chart
  $\Phi:M\to V$ in the following sense:
  \begin{enumerate}[(a)]
  \item there exist $\Gamma\in
    W^{2,\infty}([0,\ell],S\setminus\hat C_{\nabla u})$ with $\ell>0$ such that
    \begin{equation*}
      \Gamma'(t)=-N^\perp(\Gamma(t)),\qquad \Gamma'(t)\cdot\Gamma'(t')>0
    \end{equation*}
    for all $t,t'\in[0,\ell]$.
  \item $V=\Phi(M)$ where
    \begin{align*}
      &M:=\{\,(t,s)\in(0,\ell)\times\R\,:\,\Gamma(t)+sN(\Gamma(t))\in
      S\,\},\\
      &\Phi:M\to V,\qquad \Phi(t,s):=\Gamma(t)+sN(\Gamma(t)).
    \end{align*}
  \end{enumerate}
\end{definition}

The approximation of  $u\in W^{2,2}_\iso(S)$ mentioned above is carried out
with the help of two theorems below, which we quote  from 
\cite{Hornung-11a,Hornung-11b}. They deliver the desired
approximation in two steps: First, we approximate $u\in W^{2,2}_\iso(S)$ by
$u^\delta\in W^{2,2}_\iso(S)$ such that $\nabla u^\delta$ is
\textbf{finitely developable}. This means that $\hat
C_{\nabla u^\delta}$ consists of finitely many connected components, and each
connected component $U\subset \hat C_{\nabla u^\delta}$ has the property that
$\partial U\cap S$ consists of finitely many connected components.\\
In the second approximation step, $u\in W^{2,2}_\iso(S)$ with finitely
developable gradient is approximated by a map $u^{\delta'}\in W^{2,2}_\iso(S)$, with the property
that it can be parametrized by finitely many patches.

\begin{proposition}[\cite{Hornung-11b}, Proposition 5]
\label{thm:horn1}
Let $u\in W^{2,2}_\iso(S)$. Then for every $\delta>0$ there exists $u^\delta\in
W^{2,2}_\iso(S)$ with the following properties:
\begin{itemize}
\item[(i)] The gradient $\nabla u^\delta$ is finitely developable.
\item[(ii)] $u^\delta=u$ on the set
  \begin{eqnarray*}
    S_\delta&:=&\bigcup\left\{\,[x;N(x)]_S\,:\,x\in E_\delta\setminus
        \hat C_{\nabla u}\right\}\cup\\
  &&\bigcup\left\{\,U\,:\,\text{$U$ is
      a connected component of $\hat C_{\nabla u}$ with $U\cap E_\delta\neq\emptyset$}\right\}\,,
  \end{eqnarray*}
where $E_\delta:=\{\,x\in S\,:\,\dist(x,\partial S)>\delta\,\}$.
Moreover, $u^\delta$ is affine on every connected component of $S\setminus \bar
S_\delta$.
\item[(iii)] $u^\delta\to u$ strongly in $W^{2,2}(S;\R^3)$ as $\delta\dto 0$.
\end{itemize}
\end{proposition}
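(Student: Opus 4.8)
The plan is to leave $u$ untouched on the set $S_\delta$ -- the part of $S$ that is ``visible'' from the interior core $E_\delta$ -- and to replace $u$ by a piecewise affine isometric immersion on the thin collar $S\setminus\bar S_\delta$, exploiting that each asymptotic segment bounding the collar carries a constant gradient, so that $u$ already restricts to an affine map there. The gluing will be $C^1$ because the affine pieces match $u$ and $\nabla u$ along these rulings, and the collar shrinks to a null set as $\delta\dto 0$.

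\step{1} (Structure near the boundary.) Recall from the structure theory of $W^{2,2}$ isometries (Kirchheim~\cite{Kirchheim-01}, Pakzad~\cite{Pakzad-04}, see also~\cite{Hornung-11a}) that $S\setminus\hat C_{\nabla u}$ is foliated by the asymptotic segments $[x;N(x)]_S$, on each of which $\nabla u$ equals a fixed matrix $R$ with $R^tR=\mathrm{Id}$, that $\nabla u$ is constant on each connected component of $\hat C_{\nabla u}$, and that $\nabla u$ is continuous on $S$ (Lemma~\ref{L:cont}). I would first invoke the part of that theory guaranteeing that for each $\delta>0$ only \emph{finitely many} connected components of $\hat C_{\nabla u}$ meet $E_\delta$. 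Then I would describe a connected component $W$ of $S\setminus\bar S_\delta$: every point of $W$ lies in the $\delta$-collar $S\setminus E_\delta$ and, if it is not a flat point, its whole asymptotic segment stays in that collar (otherwise it would hit $E_\delta$ and $W$ would meet $S_\delta$); moreover $\partial W\cap S\subset\partial S_\delta$ is a finite union of segments, each of which is either a boundary ruling of the tube $\bigcup\{[x;N(x)]_S:x\in E_\delta\setminus\hat C_{\nabla u}\}$ or lies on the boundary of one of the finitely many bad components meeting $E_\delta$; on each such segment $\nabla u$ equals a fixed matrix with orthonormal columns.

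\step{2} (Definition of $u^\delta$ and gluing.) On $\bar S_\delta\cap S$ set $u^\delta:=u$. On a component $W$ of $S\setminus\bar S_\delta$, pick a boundary segment $\ell\subset\partial W\cap S$, a point $x_0\in\ell$, and let $R$ be the value of $\nabla u$ on $\ell$; put $u^\delta(x):=u(x_0)+R(x-x_0)$ on $W$. Since $u$ restricted to $\ell$ has constant derivative $R$ along $\ell$, this affine map agrees with $u$ and with $\nabla u$ along $\ell$. The crucial -- and, I expect, hardest -- step is to verify that this prescription is \emph{consistent}: if $\partial W\cap S$ consists of more than one segment, the affine maps read off from the various segments must coincide. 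Here one uses that the rulings bounding a single component $W$ are mutually compatible -- either parallel translates of one another, or half-lines through a common point lying outside $E_\delta$ -- which follows from the no-crossing property~\eqref{P:L:2} of asymptotic lines, the convexity of $E_\delta$, and the continuity of $\nabla u$. Granting consistency, $u^\delta$ is affine on each $W$ and matches $u$ together with $\nabla u$ across $\partial W\cap S$, so the pieces glue to $u^\delta\in W^{2,2}(S,\R^3)$; moreover $\nabla u^\delta$ has orthonormal columns everywhere (it equals $\nabla u$ on $S_\delta$ and a constant such matrix on each $W$), hence $u^\delta\in W^{2,2}_{\iso}(S)$.

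\step{3} (Finite developability and convergence.) Property (ii) holds by construction. For (i): every $W$ and every flat component of $u$ inside $S_\delta$ lies in $C_{\nabla u^\delta}$, and since the whole collar has been made affine, the only components of $C_{\nabla u^\delta}$ whose boundary in $S$ has more than two pieces come from the finitely many bad components of $u$ meeting $E_\delta$ and their finitely many junctions with the rulings of the tube; hence $\hat C_{\nabla u^\delta}$ has finitely many components, each with finitely many boundary components in $S$, i.e.\ $\nabla u^\delta$ is finitely developable. For (iii): since $E_\delta\uparrow S$, the asymptotic segment through $\mathcal{L}^2$-a.e.\ point of $S\setminus\hat C_{\nabla u}$ is a nondegenerate chord of $S$ and so meets $E_\delta$ for $\delta$ small, and every nonempty bad component meets $E_\delta$ for $\delta$ small, whence $\mathcal{L}^2(S\setminus S_\delta)\to 0$ as $\delta\dto 0$. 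As $u^\delta=u$ on $S_\delta$, $\nabla^2u^\delta=0$ on $S\setminus\bar S_\delta$, and $|\nabla u^\delta|\equiv\sqrt2$ with $u^\delta$ uniformly bounded in $L^\infty$, one concludes
\[
\|u^\delta-u\|_{W^{2,2}(S)}^2=\|u^\delta-u\|_{W^{2,2}(S\setminus\bar S_\delta)}^2\leq C\,\mathcal{L}^2(S\setminus S_\delta)+\|\nabla^2u\|_{L^2(S\setminus S_\delta)}^2\longrightarrow 0\qquad(\delta\dto 0)
\]
by absolute continuity of the Lebesgue integral, which is (iii). The main obstacle is the geometric bookkeeping in Step~2: showing that the boundary of each collar component $W$ is simple enough that the affine extension is well defined and that, in the end, $\hat C_{\nabla u^\delta}$ stays finite.
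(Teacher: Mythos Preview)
This proposition is cited from \cite{Hornung-11b} and not proved in the present paper, so there is no argument here to compare your attempt against; what follows concerns only the internal correctness of your sketch.

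Your plan is the natural one, but the consistency argument in Step~2 does not do what you need. You assert that the rulings bounding a component $W\subset S\setminus\bar S_\delta$ are ``mutually compatible --- either parallel translates of one another, or half-lines through a common point outside $E_\delta$'', and suggest this makes the affine extensions from the different boundary segments agree. It does not: two parallel or concurrent rulings $\ell_1,\ell_2\subset\partial W\cap S$ carry in general \emph{different} constant values of $\nabla u$ (because $\nabla u$ varies across the rulings inside $W$ wherever $W$ meets $S\setminus C_{\nabla u}$), and then no single affine map can match $(u,\nabla u)$ on both, so your $u^\delta$ would fail to lie in $W^{2,2}$. What is really required --- and what the no-crossing property together with the convexity of $E_\delta$ actually yields --- is that $\partial W\cap S$ consists of a \emph{single} chord. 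The missing observation is this: any ruling $\ell\subset\partial S_\delta\cap S$ is disjoint from $E_\delta$ (otherwise it would lie in the tube, hence in the open set $S_\delta$), so $\ell$ cuts the convex domain $S$ into a small cap $S_-$ with $S_-\cap E_\delta=\emptyset$ and a large piece $S_+\supset E_\delta$, and necessarily $W\subset S_-$; if a second boundary ruling $\ell'\neq\ell$ existed, then $\ell'\subset\bar W\subset\bar S_-$, and the open connected set $S_\delta$ would contain points near $\ell'$ (hence in $S_-$) as well as $E_\delta\subset S_+$, yet $S_\delta\cap\ell=\emptyset$ --- a contradiction. Once this single-chord property is established, the affine extension is well defined from the unique boundary segment and the remainder of your outline (finite developability and the $W^{2,2}$ convergence in Step~3) goes through.
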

\begin{theorem}[\cite{Hornung-11a}, Theorem 2]
\label{thm:horn2}
Let $u\in W^{2,2}_\iso(S)$ with finitely developable gradient, let $V_1,\dots,V_m$ be the connected
components of $\hat C_{\nabla u}$ and let $N:S\setminus \hat C_{\nabla u}\to{\mathcal S}^1$
be the vector field associated to $u$ via Lemma \ref{L:RS:1}. Then for all $\delta>0$ there exists $n\in\N$
with $n\geq m$ and curves $\Gamma^{(k)}\in W^{2,\infty}([0,T_k];S\setminus\hat
C_{\nabla u})$ for $k=m+1,\dots,n$, such that, with
\[
V_k=S\cap \{\Gamma^{(k)}(t)+sN(\Gamma^{(k)}(t)):t\in(0,T_k), s\in\R\},\quad
k=m+1,\dots,n\,,
\]
the following holds true:
\begin{itemize}
\item[(i)]$N(\Gamma^{(k)}(t))\cdot(\Gamma^{(k)})'(t)=0$ for $k=m+1,\dots,n$,
  $t\in [0,T_k]$.
\item[(ii)] We have
\[
E_\delta:=\{\,x\in S\,:\,\dist(x,\partial S)>\delta\,\}\subset
\text{int}\left(\cup_{k=1}^n \bar V_k\right)\,.
\]
\item[(iii)] Whenever $j,k\in\{1,\dots,n\}$ with $j\neq k$, then
\[
V_j\cap V_k=\emptyset \,.
\]
\end{itemize}
\end{theorem}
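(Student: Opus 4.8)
The statement is quoted verbatim from \cite[Theorem~2]{Hornung-11a}, so I will only sketch how one would prove it. The plan is to build the patches by running curves transversal to the foliation of $S\setminus\hat C_{\nabla u}$ by \emph{asymptotic leaves} $[x;N(x)]_S$ — the segments on which $\nabla u$ is constant, which by the non-crossing property \eqref{P:L:2} either coincide or are disjoint inside $S$. Concretely, starting from a point $x_0\in E_\delta\setminus\hat C_{\nabla u}$ I would solve the ODE $\Gamma'=-N^\perp(\Gamma)$, $\Gamma(0)=x_0$; since $N$ is locally Lipschitz (Proposition~\ref{P:L} and Lemma~\ref{L:RS:1}) this has a $W^{2,\infty}$ solution, and setting $\Phi(t,s)=\Gamma(t)+sN(\Gamma(t))$ one checks, exactly as in Lemma~\ref{L:loc}, that $\Phi$ is injective — hence defines a patch in the sense of Definition~\ref{D:patch} — for as long as (a) the asymptotic direction along $\Gamma$ has rotated by less than $\pi/2$, so that $\Gamma'(t)\cdot\Gamma'(t')>0$ and $\Gamma$ does not return to a leaf it has already crossed, and (b) $\Gamma(t)$ remains in $S\setminus\hat C_{\nabla u}$. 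Property (i) of the theorem, $N(\Gamma^{(k)}(t))\cdot(\Gamma^{(k)})'(t)=0$, is then automatic.

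I would then continue transversal curves so as to sweep out $E_\delta\setminus\hat C_{\nabla u}$, starting a fresh curve each time (a) or (b) is about to fail, and argue that finitely many suffice: finite developability means $\hat C_{\nabla u}=V_1\cup\dots\cup V_m$ with each $V_i$ bounded by finitely many leaves, and this finiteness — together with the fact that on a bounded region the total angular variation of $N$ is finite, so a type-(a) breakdown can occur only boundedly often, while a type-(b) breakdown is tied to the finitely many boundary leaves of the $V_i$ — bounds the number of curves needed. This produces $\Gamma^{(m+1)},\dots,\Gamma^{(n)}$ whose patches $V_{m+1},\dots,V_n$, together with $V_1,\dots,V_m$, have the property that $E_\delta\subset\operatorname{int}\big(\bigcup_{k=1}^n\bar V_k\big)$, which is statement (ii).

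The hard part will be the last step: arranging that the patches are \emph{pairwise disjoint} (statement (iii)) without losing the cover or property (i). Transversal curves will in general produce overlapping patches; the saving observation is that every overlap is a union of whole leaves — by \eqref{P:L:2} a leaf meeting two patches lies in both — so overlaps can be removed by cutting each patch along leaves, i.e. by shrinking the parameter rectangle $(0,\ell_k)\times\R$ underlying the chart $\Phi^{(k)}$, an operation that preserves both the patch structure and the orthogonality $N(\Gamma^{(k)})\cdot(\Gamma^{(k)})'=0$. Performing this surgery so that the finitely many resulting disjoint patches still cover $E_\delta$ is where the real work lies; I expect it to require careful combinatorial bookkeeping of which leaves bound which patches and components of $\hat C_{\nabla u}$, together with elementary angle estimates of the type already used in the proof of Lemma~\ref{L:ext}, Step~2 — and this is precisely the argument carried out in \cite{Hornung-11a}.
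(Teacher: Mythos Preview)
The paper does not prove Theorem~\ref{thm:horn2}; it is stated purely as a citation of \cite[Theorem~2]{Hornung-11a} and used as a black box in the proof of Lemma~\ref{L:RS:3}. You correctly identify this at the outset, and your sketch of the underlying construction---solving the transversal ODE $\Gamma'=-N^\perp(\Gamma)$, using finite developability to bound the number of patches, and trimming overlaps along leaves---is a reasonable outline of Hornung's argument, but there is nothing in the present paper to compare it against.
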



After having collected these results  from the literature, we come to the heart of the recovery sequence construction -- the construction on a single patch.

\begin{lemma}[Construction on a single patch]
  \label{L:RS:2}
  Let $V$ be a patch for $(u,N)$. Then there exists a sequence $u^\e\in
  W^{2,2}_{\textrm{iso}}(S)$ such that
  \begin{subequations}
    \begin{eqnarray}
      \label{L:RS:2:a}
      &&\lim\limits_{\e\downarrow
        0}\int_VQ(\tfrac{x}{\e},\secf^\e(x))\,
      dx\\\notag
      &&\qquad=\int_{V}(1-\chi_{\nabla
        u}(x))Q_{\av}(\secf(x))+\chi_{\nabla u}(x) Q_{\ho}(\secf(x))\,dx,\\
      \label{L:RS:2:b}
      &&\secf^\e\wto \secf\mbox{ weakly in }L^{2}(V),\\
      \label{L:RS:2:c}
      &&\text{$u^\e$ is affine on each line segment $[x;N(x)]_S$,
        $x\in \overline V\cap S$.}
    \end{eqnarray}
  \end{subequations}
\end{lemma}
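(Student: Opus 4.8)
The plan is to realize the homogenization oscillations by a one-parameter, curvature-level modification of $u$ that leaves the ruling structure of the patch intact, and then to glue the modified map to $u$ on $S\setminus V$.

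\emph{Step 1 (ansatz in line-of-curvature coordinates).} Write the patch chart as $\Phi(t,s)=\Gamma(t)+sN(\Gamma(t))$ with $t\in(0,\ell)$, set $\kappa(t):=\Gamma''(t)\cdot N(\Gamma(t))$, and recall the line-of-curvature representation (Lemma~\ref{L:loc}, \cite{Hornung-11a,Hornung-11b}) $\secf(\Phi(t,s))=\frac{\kappa_n(t)}{1-s\kappa(t)}\,\Gamma'(t)\otimes\Gamma'(t)$ with $\kappa_n\in L^2(0,\ell)$ and $\Gamma'=-N^\perp$. I would look for $u^\e$ with $u^\e\circ\Phi(t,s)=\gamma^\e(t)+s\,b^\e(t)$, where the moving frame $(\gamma^{\e\prime},b^\e,\gamma^{\e\prime}\times b^\e)$ solves the Frenet-type linear ODE forced by the fixed $\kappa$ and a curvature profile $\lambda^\e\in L^2(0,\ell)$ still to be chosen. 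A direct computation shows that any such $u^\e$ is automatically an isometry on $V$ with $\|\secf^\e\|_{L^2(V)}\lesssim\|\secf\|_{L^2(V)}$, is affine on every ruling $[x;N(x)]_S$, and satisfies $\secf^\e(\Phi(t,s))=\frac{\lambda^\e(t)}{1-s\kappa(t)}\,\Gamma'(t)\otimes\Gamma'(t)$; prescribing the frame and base point at $t=0$ to coincide with those of $u$ turns $u^\e$, extended by $u$ on $S\setminus V$, into an element of $W^{2,2}_\iso(S)$ provided the holonomy at $t=\ell$ is closed (Step~4). In particular \eqref{L:RS:2:c} comes for free, and \eqref{L:RS:2:b} reduces to $\lambda^\e\wto\kappa_n$ in $L^2(0,\ell)$.

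\emph{Step 2 (choice of the curvature profile).} Up to a null set the rulings of $V$ are flat, conical, or cylindrical, and on a cylindrical ruling the asymptotic direction is $\Gamma'(t)$; the cylindrical rulings with a \emph{rational} direction correspond to a measurable set $I=\bigcup_{T\in\mathcal S_{\nabla u}\cap\mathcal S^1_*}I_T\subset(0,\ell)$ with $\Gamma'=\pm T$ and $N(\Gamma(\cdot))\cdot T=0$ a.e. on $I_T$, hence (as $\Gamma'$ is Lipschitz) $\kappa=0$ a.e. on $I_T$. After truncating the countable index set to a finite $\mathcal F$, I would set $\lambda^\e:=\kappa_n$ off $\bigcup_{T\in\mathcal F}I_T$ and, on $I_T$ for $T\in\mathcal F$, set $\lambda^\e(t):=\kappa_n(t)\bigl(1+\alpha'_*(T\cdot\Gamma(t)/\e)\bigr)$, with $\alpha_*$ the explicit $T$-minimizer \eqref{eq:hom-formula-minimizer}. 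Since $q_{\av,T}\in[\alpha,\alpha^{-1}]$ by (Q3), $\alpha'_*$ is bounded and $r(T)$-periodic with zero mean, so $\lambda^\e$ is bounded in $L^2$ and $\lambda^\e\wto\kappa_n$: indeed $\tfrac{d}{dt}(T\cdot\Gamma)=\pm1$ on $I_T$, so the oscillating factor $\alpha'_*(T\cdot\Gamma(t)/\e)$ equidistributes against $L^1$ weights by the area formula.

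\emph{Step 3 (energy).} Changing variables by $\Phi$ turns $\int_V Q(\tfrac x\e,\secf^\e)\,dx$ into $\int_0^\ell\!\int\frac{\lambda^\e(t)^2}{1-s\kappa(t)}\,Q\bigl(\tfrac{\Gamma(t)+sN(\Gamma(t))}{\e},\Gamma'(t)\otimes\Gamma'(t)\bigr)\,ds\,dt$. On $I_T$ one has $\kappa=0$ and $T\cdot\Phi(t,s)=T\cdot\Gamma(t)$, independent of $s$, so as $\e\dto0$ the inner $s$-integral produces the transverse average $q_{\av,T}$ of \eqref{eq:31} (evaluated at $T\cdot\Gamma(t)/\e$ reduced mod $r(T)$), after which this fast variable equidistributes (area formula, using $\tfrac{d}{dt}(T\cdot\Gamma)=\pm1$) and \eqref{eq:hom-formula-3} gives, on the rulings of $I_T$, the limit $\int\cZ\,Q_{\ho}(\secf)$. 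On the complement the same computation with $\alpha_*\equiv0$ yields $Q_{\av}(\secf)$ (using $\fint_0^{r(T)}q_{\av,T}=\int_YQ(y,T\otimes T)\,dy$ on conical and irrational-cylindrical rulings, and plain two-dimensional equidistribution when the direction is irrational), while the flat rulings contribute $0=Q_{\av}(\secf)=Q_{\ho}(\secf)$. Summing over $\mathcal F$ and letting $\mathcal F$ exhaust $\mathcal S_{\nabla u}\cap\mathcal S^1_*$ --- the error $\sum_{T\notin\mathcal F}\int_{I_T}(Q_{\av}-Q_{\ho})(\secf)\to0$ since $Q_{\ho}\le Q_{\av}$ --- a diagonal argument delivers a single sequence with \eqref{L:RS:2:a}.

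\emph{Step 4 (closure of the construction --- the main obstacle).} The delicate point is keeping $u^\e$ an \emph{exact} isometry of all of $S$. Since $\partial V\cap S$ consists of the two end rulings $t=0,\ell$, extending by $u$ gives a $W^{2,2}$ map precisely when the frame $(\gamma^{\e\prime},b^\e,\gamma^{\e\prime}\times b^\e)$ and the point $\gamma^\e$ also match those of $u$ at $t=\ell$. Prescribing them at $t=0$ is free; at $t=\ell$ the defect is the holonomy of the frame generator $A^\e$ relative to the generator built from $\kappa_n$, and their difference is supported on $\bigcup_{T\in\mathcal F}I_T$, where it equals $(\lambda^\e-\kappa_n)$ times a fixed skew matrix. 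Since $\lambda^\e\wto\kappa_n$, a Gr\"onwall estimate --- controlling the primitive of $\lambda^\e-\kappa_n$ uniformly via Arzel\`a--Ascoli --- forces this defect to tend to $0$. I would therefore run the oscillation only on $(0,\ell-\sigma_\e)$ and reserve the shrinking layer $(\ell-\sigma_\e,\ell)$, $\sigma_\e\dto0$, to absorb the resulting $o(1)$ defect by a finite-dimensional perturbation of $\lambda^\e$ there, at energy cost $O(\sigma_\e)$; this keeps $\lambda^\e\wto\kappa_n$ and does not affect the limit of Step~3. I expect this closure argument, together with the measure-theoretic bookkeeping for the possibly fat-Cantor set $Z_{\nabla u}$ (cf.~Remark~\ref{sec:sets-z_nabla-u}), to be the main technical hurdle.
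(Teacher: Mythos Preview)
Your overall strategy---modulate $\kappa_n$ in line-of-curvature coordinates so that the ruling structure is preserved---is exactly the paper's. Steps~1--3 are essentially the same construction (the paper writes $\secf^\e(x)=(1+\theta^\e(x))\secf(x)$ with $\theta^\e(x)=\sum_j\chi_{\nabla u,T_j}(x)\alpha_j'(T_j\cdot x/\e)$, then passes to the energy limit via \emph{strong two-scale convergence} and Lemma~\ref{L:twoscale-lsc}(b); this is cleaner than your direct equidistribution computation, but your approach should also work). Using the explicit minimizer $\alpha_*$ from \eqref{eq:hom-formula-minimizer} in place of a further diagonalization over smooth $\alpha_j$ is fine.

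The real divergence is your Step~4, and it is an unnecessary complication. You are tacitly reading the lemma as requiring $u^\e=u$ on $S\setminus V$, which forces you to close the holonomy at $t=\ell$. But the lemma does not ask for this: it only asks that $u^\e\in W^{2,2}_{\iso}(S)$, that \eqref{L:RS:2:a}--\eqref{L:RS:2:b} hold on $V$, and that $u^\e$ be affine on every ruling $[x;N(x)]_S$, $x\in\overline V\cap S$. The paper therefore matches $u$ only at the \emph{one} boundary ruling $L_1=[\Gamma(0);N(\Gamma(0))]_S$ (value and gradient), makes no attempt to close at $t=\ell$, and simply extends $u^\e$ affinely off $V$ (which is possible precisely because $u^\e$ is affine on $L_1$ and $L_2$). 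The global assembly is deferred to Lemma~\ref{L:RS:3}: there the patches are merged by composing the outer pieces with rigid motions $\varphi_1,\varphi_2$, which leaves $\secf$ (hence the energy) unchanged. So the ``main obstacle'' you identify simply does not arise.

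It is also worth noting that your proposed fix in Step~4 is not obviously viable even if it were needed: you have a single scalar degree of freedom $\lambda^\e$ on the shrinking layer $(\ell-\sigma_\e,\ell)$, but the defect to be killed lives in $SO(3)\times\R^3$ (frame rotation plus base-point translation), i.e.\ six dimensions. A bounded perturbation of $\kappa_n$ alone on an interval of length $\sigma_\e\to 0$ cannot in general absorb an $o(1)$ defect of this type. Dropping the two-sided matching avoids the issue entirely.
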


As already announced, the preceding lemma will be combined with Theorem \ref{thm:horn2} for the
construction of the recovery sequence for the case of $u\in W^{2,2}_\iso(S)$
with finitely developable gradient:

\begin{lemma}[Construction in the finitely developable case]
  \label{L:RS:3}
  Let $u\in W^{2,2}_{\iso}(S)$  such that $\nabla u$ is finitely developable. Then there exists a
  sequence $u^\e\in W^{2,2}_{\iso}(S)$ such that
  \begin{subequations}
    \begin{eqnarray}
      \label{L:RS:3:1}
      &&\lim\limits_{\e\downarrow
        0}\|u^\e-u\|_{L^2(S)}=0,\\
      \label{L:RS:3:2}
      &&\lim\limits_{\e\downarrow 0}\mathcal E^\e(u^\e)=\mathcal E^0(u),\\
      \label{L:RS:3:3}
      &&\text{if $u$ satisfies \eqref{ass:BC}, then $u^\e$ satisfies \eqref{ass:BC}}.
    \end{eqnarray}
  \end{subequations}
\end{lemma}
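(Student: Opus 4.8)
The plan is to localise the construction with the help of the patch decomposition of Theorem~\ref{thm:horn2}, to run Lemma~\ref{L:RS:2} on each line of curvature patch, to paste the resulting maps together into a single competitor, and to conclude by a diagonalisation in the approximation parameter. To begin with, note that $\mathcal E^0(u)<\infty$: by (Q3) and $\secf\in L^2(S,\R^{2\times 2})$ the density $(1-\cZ)Q_{\av}(\secf)+\cZ Q_{\ho}(\secf)$ is dominated by $\tfrac1\alpha|\secf|^2$. Fix $\delta>0$. As $\nabla u$ is finitely developable, Theorem~\ref{thm:horn2}, applied with the vector field $N$ from Lemma~\ref{L:RS:1}, furnishes pairwise disjoint open sets $V_1,\dots,V_n\subset S$ such that $V_1,\dots,V_m$ are the connected components of $\hat C_{\nabla u}$ (on each of which $u$ is affine, hence $\secf=0$), the sets $V_{m+1},\dots,V_n$ are patches for $(u,N)$ in the sense of Definition~\ref{D:patch}, and $E_\delta:=\{x\in S:\dist(x,\partial S)>\delta\}\subset\operatorname{int}\big(\bigcup_{k=1}^n\overline{V_k}\big)$.

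For $k=m+1,\dots,n$ I would apply Lemma~\ref{L:RS:2} to the patch $V_k$, obtaining $u^\e_k\in W^{2,2}_{\iso}(S)$ that satisfies \eqref{L:RS:2:a}--\eqref{L:RS:2:c}. The part of $\partial V_k$ lying in $S$ consists of line segments $[x;N(x)]_S$ (the ``short ends'' of the patch) together with portions of $\partial S$; using that the oscillatory correction $u^\e_k-u$ underlying Lemma~\ref{L:RS:2} is supported away from the short ends and that $u^\e_k$ is affine on every segment $[x;N(x)]_S$ with $x\in\overline{V_k}\cap S$ by \eqref{L:RS:2:c}, the traces of $u^\e_k$ and of $\nabla u^\e_k$ on $\partial V_k\cap S$ coincide with those of $u$ and $\nabla u$. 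Hence the map
\begin{equation*}
  u^{\e,\delta}:=u^\e_k\ \text{on}\ V_k\ (k=m+1,\dots,n),\qquad u^{\e,\delta}:=u\ \text{on}\ S\setminus\textstyle\bigcup_{k>m}V_k,
\end{equation*}
is well defined, belongs to $W^{2,2}_{\iso}(S)$, and converges to $u$ in $L^2(S)$ as $\e\downarrow0$. This pasting step — in particular the verification that the glued field is again a $W^{2,2}$ isometric immersion across the interfaces, which by the structure of the patches are finitely many asymptotic line segments — is the technical heart of the argument, and is precisely the reason why property \eqref{L:RS:2:c} is recorded; finite developability of $\nabla u$ enters through the finiteness of the patch collection.

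It remains to evaluate the energy and pass to the limit. Since the $V_k$ are disjoint,
\begin{equation*}
  \mathcal E^\e(u^{\e,\delta})=\sum_{k=m+1}^n\int_{V_k}Q(\tfrac x\e,\secf^{\e,\delta})\,dx+\int_{R_\delta}Q(\tfrac x\e,\secf)\,dx,\qquad R_\delta:=S\setminus\textstyle\bigcup_{k>m}V_k.
\end{equation*}
On each $V_k$ with $k>m$, \eqref{L:RS:2:a} gives $\lim_{\e\downarrow0}\int_{V_k}Q(\tfrac x\e,\secf^{\e,\delta})=\int_{V_k}\big((1-\cZ)Q_{\av}(\secf)+\cZ Q_{\ho}(\secf)\big)$. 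Splitting $R_\delta=\big(\bigcup_{k\le m}V_k\big)\cup\big(S\setminus\bigcup_{k=1}^nV_k\big)$, the first set carries $\secf=0$, hence zero density, while the second set is, up to an $\mathcal L^2$-null set, contained in $S\setminus E_\delta$, so by (Q3) its energy is at most $\tfrac1\alpha\int_{S\setminus E_\delta}|\secf|^2=:\omega(\delta)$, with $\omega(\delta)\to0$ as $\delta\downarrow0$ by absolute continuity of the integral. As $\omega(\delta)$ also dominates $\int_{S\setminus E_\delta}\big((1-\cZ)Q_{\av}(\secf)+\cZ Q_{\ho}(\secf)\big)$, one obtains $\limsup_{\e\downarrow0}|\mathcal E^\e(u^{\e,\delta})-\mathcal E^0(u)|\le C\,\omega(\delta)$. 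Combining this with $\lim_{\e\downarrow0}\|u^{\e,\delta}-u\|_{L^2(S)}=0$ and invoking a standard diagonalisation in $\delta$, one extracts $\e(\delta)\downarrow0$ for which the reparametrised family $u^\e:=u^{\e(\delta),\delta}$ satisfies \eqref{L:RS:3:1} and \eqref{L:RS:3:2}.

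Finally, for the boundary condition: if $u$ satisfies \eqref{ass:BC}, then by Lemma~\ref{L:RS:1} the segment $L_{BC}$ is either contained in a single component $V_{k_0}$ of $\hat C_{\nabla u}$, on which $u^{\e,\delta}=u$, or it equals one asymptotic segment $[x;N(x)]_S$, which for $\delta$ small is a short end of the patches abutting it — a location where, by the localisation noted above, $u^\e_k=u$ and $\nabla u^\e_k=\nabla u$. In both cases $u^{\e,\delta}$ satisfies \eqref{ass:BC}, and this is preserved under the diagonalisation, giving \eqref{L:RS:3:3}. The main obstacle, as indicated, is Step~2: pasting the per-patch constructions of Lemma~\ref{L:RS:2} into a global element of $W^{2,2}_{\iso}(S)$ that moreover respects, when present, the affine data on $L_{BC}$.
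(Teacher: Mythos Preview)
Your gluing step contains a genuine gap. Lemma~\ref{L:RS:2} does \emph{not} assert that the sequence $u^\e_k$ agrees with $u$ (and $\nabla u^\e_k$ with $\nabla u$) on both boundary segments of the patch $V_k$; it only guarantees \eqref{L:RS:2:c}, namely that $u^\e_k$ is \emph{affine} on every asymptotic segment $[x;N(x)]_S$ with $x\in\overline{V_k}\cap S$. Your claim that ``the oscillatory correction $u^\e_k-u$ \ldots\ is supported away from the short ends'' is unfounded and in fact false. Inspecting the construction (Step~3 in the proof of Lemma~\ref{L:RS:2}): one solves a frame ODE $R'_\theta=A_\theta R_\theta$ with modified curvature $(1+\theta^\e)\kappa_n$ and initial data matching $u$ at $t=0$ (the segment $L_1$). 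At $t=\ell$ (the opposite segment $L_2$) the frame $R_{\theta^\e}(\ell)$ is a \emph{different} rotation from $R(\ell)$, so $u^\e_k$ and $u$ differ on $L_2$ by a nontrivial rigid motion. Consequently your patchwork definition $u^{\e,\delta}:=u^\e_k$ on $V_k$, $:=u$ elsewhere, has jump discontinuities in value and gradient across the interfaces and is not in $W^{2,2}_{\iso}(S)$.

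The paper repairs this by an \emph{inductive} construction: one processes the patches one at a time, and after inserting the modification $v^\e$ on $V_{k+1}$ one composes the already-built map on each of the two connected components of $S\setminus\overline{V_{k+1}}$ with suitable rigid motions $\varphi_1,\varphi_2$ so as to match the (affine) traces on $L_1,L_2$. Since rigid motions leave $\secf$ unchanged, all energy and two-scale statements survive, while the glued map lies in $W^{2,2}_{\iso}(S)$. This mechanism, rather than any compact support of the correction, is what \eqref{L:RS:2:c} is designed for. Your treatment of \eqref{ass:BC} inherits the same defect: $L_{BC}$ need not be a ``short end'' of a patch, and even when it is, $u^\e_k=u$ is only arranged at the initial segment $L_1$. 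In the paper one simply observes that the final map is affine on $L_{BC}$ (because $L_{BC}\in\mathcal L$) and then post-composes with a single rigid motion $\varphi^{\delta,\e}$ to enforce \eqref{ass:BC}.
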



The construction of the recovery sequence for arbitrary $u\in W^{2,2}_\iso(S)$
satisfying the boundary condition \eqref{ass:BC} is then achieved by combining
Lemma \ref{L:RS:3} with Proposition \ref{thm:horn1}. This is what we will do
next; the proof of the theorem is followed by the proofs of the auxiliary
results above.

\begin{proof}[Proof of Theorem~\ref{T:1} (c)]
  We only need to consider the case with prescribed boundary conditions, since
  otherwise we might artificially introduce boundary conditions by introducing a line segment $L_{BC}$ on which $u$ is affine. Let
  $N:S\setminus \hat C_{\nabla u}\to\mathcal S^1$ be as in
  Lemma~\ref{L:RS:1}. We use Proposition \ref{thm:horn1} to  approximate $u$ by
  $u^\delta\in W^{2,2}_\iso(S)$ with finitely developable gradient. 
  We also adapt the definitions of
$E_\delta, S_\delta$ from the statement of that proposition. 
  For the treatment of the boundary conditions, we shall always assume that $\delta>0$ is so small that
  $L_{BC}\cap E_\delta\neq\emptyset$. 
  Note that
  \begin{equation}
    \label{eq:51}
    E_\delta\subset S_\delta\qquad\text{ and
          }\qquad     L_{BC}\subset S_{\delta}.
  \end{equation}
  The first inclusion directly follows from the definition of
  $S_\delta$. The argument for the second inclusion is postponed to
  the end of this proof. \\
  By Proposition \ref{thm:horn1}, we have $\lim_{\delta\downarrow
    0}\|u^\delta-u\|_{L^2(S)}=0$, 
  \begin{equation}\label{eq:52}
    u^\delta=u\text{ in }S_\delta,
  \end{equation}
  and $u^\delta$ is affine
  on each connected component of $S\setminus \bar S_\delta$. Note
  that the latter implies that
  \begin{equation}\label{eq:55}
    |\secf^\delta(x)|\leq|\secf(x)|\qquad\text{a.e.~in 
    }S.
  \end{equation}
  Since $u$ satisfies \eqref{ass:BC}, it follows from the second
  inclusion in \eqref{eq:51} and \eqref{eq:52} that $u^\delta$
  satisfies \eqref{ass:BC}. Furthermore,
  \eqref{eq:52} implies that $\chi_{\nabla
    u^\delta}=\chi_{\nabla u}$ and $\secf^\delta=\secf$ almost
  everywhere on $S_\delta$. Hence,
  \begin{eqnarray*}
    &&\mathcal E^0(u^\delta)-\mathcal E^0(u)\\
    &=&\int_{S\setminus S_\delta}(1-\chi_{\nabla
      u^\delta}(x))Q_{\av}(\secf^\delta(x))+\chi_{\nabla
      u^\delta}(x)Q_{\ho}(\secf^\delta(x))\,dx\\
    &&-\int_{S\setminus S_\delta}(1-\chi_{\nabla
      u}(x))Q_{\av}(\secf(x))+\chi_{\nabla
      u}(x)Q_{\ho}(\secf(x))\,dx.    
  \end{eqnarray*}
  Because of $S\setminus S_{\delta}\subset S\setminus E_\delta$ (cf.
  \eqref{eq:51}), $0\leq Q_{\ho}(F)\leq Q_{\av}(F)\leq
  \frac{1}{\alpha}|F|^2$ (cf. \eqref{ass:Q}), and  \eqref{eq:55},
  we estimate
  \begin{equation*}
    |\mathcal E^0(u^\delta)-\mathcal E^0(u)|\leq
    \frac{2}{\alpha}\int_{S\setminus E_{\delta}}|\secf(x)|^2\,dx,
  \end{equation*}
  and thus conclude
  \begin{equation}\label{eq:53}
    \lim\limits_{\delta\downarrow 0}\left(\|u^{\delta}-u\|_{L^2(S)}+\big|\mathcal
    E^0(u^{\delta})-\mathcal E^0(u)\big|\right)=0.
  \end{equation}
  Next, we apply Lemma~\ref{L:RS:3}: For  each $\delta>0$ there exists
  a sequence $u^{\delta,\e}\in W^{2,2}_{\iso}(S)$ such that each
  $u^{\delta,\e}$ satisfies \eqref{ass:BC}, and 
  \begin{equation}\label{eq:50}
    \lim\limits_{\e\downarrow 0}\left(\|u^{\delta,\e}-u^\delta\|_{L^2(S)}+\big|\mathcal
    E^\e(u^{\delta,\e})-\mathcal E^0(u^\delta)\big|\right)=0.
  \end{equation}
  Combined with \eqref{eq:53} we get
  \begin{equation*}
    \lim\limits_{\delta\downarrow 0}\lim\limits_{\e\downarrow 0}\left(\|u^{\delta,\e}-u\|_{L^2(S)}+\big|\mathcal
    E^\e(u^{\delta,\e})-\mathcal E^0(u)\big|\right)=0.
  \end{equation*}
  By a standard diagonalization argument due to Attouch (see
  \cite[Corollary~1.16]{MR773850}), there exists a map $\e\mapsto
  \delta(\e)\in\N$ such that
  \begin{equation*}
    \lim\limits_{\delta\downarrow 0}\lim\limits_{\e\downarrow 0}\left(\|u^{\e}-u\|_{L^2(S)}+\big|\mathcal
    E^\e(u^{\e})-\mathcal E^0(u)\big|\right)=0.
  \end{equation*}
  Moreover, since each $u^{\delta,\e}$ satisfies \eqref{ass:BC}, the
  diagonal sequence $u^\e$ satisfies \eqref{ass:BC} as well.

  To complete the proof, it remains to prove the second inclusion
  in \eqref{eq:51}, i.e. $L_{BC}\subset S_\delta$. If
  $L_{BC}\subset\hat C_{\nabla u}$, then there exists a connected
  component $U\subset\hat C_{\nabla u}$ that contains $L_{BC}$, and
  from  $\emptyset\neq L_{BC}\cap E_\delta\subset U\cap E_{\delta}$ we
  deduce that $S_{\delta}\supset U\supset L_{BC}$ as claimed.
  Likewise, if $L_{BC}\not\subset\hat C_{\nabla u}$, then there exists
  $x\in L_{BC}\cap (S\setminus\hat C_{\nabla u})$. From
  Lemma~\ref{L:RS:1} we infer that  $L_{BC}=[x,N(x)]_S$. Since
  $L_{BC}\cap E_\delta\neq\emptyset$ we deduce $L_{BC}\subset
  S_\delta$ from the definition of $S_\delta$.
  
\end{proof}

In the remainder of this section we present the proofs of
Lemma~\ref{L:RS:1} -- Lemma~\ref{L:RS:3}.

\begin{proof}[Proof of Lemma~\ref{L:RS:1}]
This is very similar to step 3 in the proof of Lemma \ref{L:ext}, and we are
going to be brief. We  need to construct $N$ on every connected component of
$C_{\nabla u}\setminus \hat C_{\nabla u}$. Let $U$ be such a connected
component.\\ 
By \eqref{ass:BC} we have either $L_{BC}\cap U=\emptyset$ or $L_{BC}\subset U$.
First suppose that $L_{BC}\cap U=\emptyset$. Let $L_1,L_2$ be the two
connected components of $\partial U\cap S$. Since $L_1,L_2\subset S\setminus
C_{\nabla u}$, $N$ is defined there, and takes values $N_1,N_2$ respectively.
\begin{itemize}
\item if $N_1$ and $N_2$ are not parallel, then there exists a unique 
$A\in [x_1;N_1]\cap[x_2;N_2]$ and we set $ N(y):=(A-y)/|A-y|$ for $y\in U$;
\item if $N_1$ and $N_2$ are parallel, then we set $N(y)=N_1$ for $y\in U$.
\end{itemize}
Now suppose $L_{BC}\subset U$. Choose $x,\bar N$ such that $L_{BC}=[x;\bar N]_S$, and
set $N(x)=\bar N$ on $L_{BC}$.
Then  subdivide $U$ into the two connected
components of $U\setminus L_{BC}$, and carry out the construction from the
previous case. \\
In this way, we obtain a vector field $N:S\setminus \hat C_{\nabla u}\to {\mathcal S}$ with the
property that $N\otimes N$ is locally Lipschitz (cf.~the proof of Lemma \ref{L:ext}). Since every connected component $U$ of
$S\setminus \hat C_{\nabla u}$ is simply connected, there exists a continuous
lifting $\tilde N:U\to{\mathcal S}^1$. This defines the wished for vector field.
\end{proof}

\begin{proof}[Proof of Lemma~\ref{L:RS:2}]
  Let $\Gamma\in W^{2,\infty}([0,\ell],S\setminus \hat C_{\nabla u})$
  and $\Phi:M\to V$ be associated with $V$ according
  to Definition~\ref{D:patch}.  Set $L_1:=[\Gamma(0),N(\Gamma(0)]_S$
  and note that $L_1$ is one of the two connected
  components of $\partial V\cap S$. To simplify the presentation, we say that an
  isometry $v\in W^{2,2}_{\iso}(V)$ satisfies property (A), if
  \begin{equation}\tag{A}\label{eq:A}
    \left\{\begin{aligned}
      &v\text{ is affine on each line segment }[x,N(x)]_S\text{ for all
      }x\in\overline V\cap S,\\
      &v=u\text{ and }\nabla v=\nabla u\text{ on }L_1
    \end{aligned}\right.
  \end{equation}
  By virtue of the definition of $N$, see Lemma~\ref{L:RS:1},  $u$ itself
  satisfies property \eqref{eq:A}.
  \medskip

  \step 1 A reduction step.

  We claim that it suffices to prove the following statement: 
  \begin{itemize}
  \item[(S)]
    For arbitrary $J\in\N$, mutually non-parallel vectors
    $T_1,\ldots,T_J\in\mathcal S^1$, and functions $\alpha_j\in
    C^\infty_{T_j\text{-per}}(\R)$, $j=1,\ldots,J$, there exists a
    sequence $v^\e\in W^{2,2}_{\iso}(V)$ such that $v^\e$ satisfies
    property \eqref{eq:A} and the associated fundamental form satisfies
      \begin{align}
        \label{eq:46}
        &\secf^\e\stto \Big(1+\sum_{j=1}^J\chi_{\nabla
          u,T_j}(x)\alpha_j'(T_j\cdot y)\Big)\secf(x).
      \end{align}
  \end{itemize}
  Here comes the argument. Recall the definition of ${\mathcal S}_{\nabla u}$
  from Lemma~\ref{L:Z}.  Let $T_1,T_2,\ldots$ be an enumeration of
  ${\mathcal S}_{\nabla u}$. By definition we have $\chi_{\nabla u}^*(x)=\sum_{j=1}^\infty\chi_{\nabla
    u,T_j}(x)$ for almost every $x\in S$, and thus $\lim\limits_{J\uparrow\infty}\int_{S}|(\sum_{j=1}^J\chi_{\nabla
    u,T_j})-\chi_{\nabla u}^*|Q_{\ho}(\secf)\,dx=0$. Therefore, for every
  $\delta>0$ we can find $J^\delta>0$ and functions $\alpha_{\delta,j}\in
  C^\infty_{T_j\text{-per}}(\R)$, $j=1,\ldots,J^\delta$, such that
  \begin{equation}\label{eq:42}
    \begin{aligned}
      &\int_V\Big|\chi_{\nabla u}(x)-\Big(\sum_{j=1}^{J^\delta}\chi_{\nabla u,T_j}(x)\Big)\Big|Q_{\av}(\secf(x))\,dx\leq\delta,\\
      &\Bigg|\int_V\chi_{\nabla
        u}(x)Q_{\ho}(\secf(x))-\sum_{j=1}^{J^\delta}\chi_{\nabla
        u,T_j}(x)\int_YQ(y,\alpha_{\delta,j}'(T_j\cdot
      y)\secf(x))\,dydx\Bigg|<\delta.
    \end{aligned}
  \end{equation}
  By assumption (S), there exists a sequence
  $v^{\delta,\e}\in W^{2,2}_{\iso}(S)$ with 
  \begin{equation}
    \label{eq:45}
    v^{\delta,\e}=u\text{ and }\nabla v^{\delta,\e}=\nabla u\text{
      on }L_1,
  \end{equation}
  and 
  \begin{equation}\label{eq:43}
    \secf^{\delta,\e}\stto \Big(1+\sum_{j=1}^{J^\delta}\chi_{\nabla
      u,T_j}(x)\alpha_{\delta,j}'(T_j\cdot y)\Big)\secf(x)\qquad\text{as
    }\e\downarrow 0.
  \end{equation}
  We finally claim that the sought sequence $u^\e$ can be obtained
  as a diagonal sequence of $v^{\delta,\e}$. To that end set
  \begin{eqnarray*}
    e^{\delta,\e}&:=&\int_V Q(\tfrac{x}{\e},\secf^{\delta,\e}(x))\,dx,\\
    e^0&:=&\int_{V}(1-\chi_{\nabla
      u}(x))Q_{\av}(\secf(x))+\chi_{\nabla u}(x) Q_{\ho}(\secf(x))\,dx,
  \end{eqnarray*}
  and consider
  \begin{eqnarray*}
    c(\delta,\e):=\|v^{\delta,\e}-u\|_{L^2(S)}  +|e^{\delta,\e}-e^0|.
  \end{eqnarray*}
  We shall prove that
  \begin{equation}\label{eq:41}
    \limsup\limits_{\delta\downarrow 0}\limsup\limits_{\e\downarrow 0}c(\delta,\e)=0.
  \end{equation}
  Indeed, \eqref{eq:43}
  implies that $\secf^{\delta,\e}\wto \secf$ weakly in
  $L^2(V)$ as $\e\downarrow 0$. Combined with\eqref{eq:45} we deduce
  that $v^{\delta,\e}\to u$ strongly in $L^2(V)$ as $\e\downarrow 0$. It remains to show $\lim_{\delta\downarrow0}\lim_{\e\downarrow
    0}e^{\delta,\e}=e^0$. 
  From Lemma~\ref{L:twoscale-lsc} (b) and \eqref{eq:43} we get
  \begin{eqnarray*}
    \lim\limits_{\e\downarrow 0}e^{\delta,\e}&=&\int_{V\times Y}Q\Bigg(y,\Big(1+\sum_{j=1}^{J^\delta}\chi_{\nabla
      u,T_j}(x)\alpha_{\delta,j}'(T_j\cdot y)\Big)\secf(x)\Bigg)\,dydx\\
    &=&\int_{V\times Y}\Big(1-(\sum_{j=1}^{J^\delta}\chi_{\nabla u,T_j}(x))\Big)Q\Big(y,\secf(x)\Big)\,dydx\\
    &&+\int_{V\times Y}\sum_{j=1}^{J^\delta}\chi_{\nabla
      u,T_j}(x)Q\Big(y,\Big(1+\alpha_{\delta,j}'(T_j\cdot
    y)\Big)\secf(x)\Big)\,dydx.
  \end{eqnarray*}
  Combined with \eqref{eq:42}, \eqref{eq:41} follows.

  Finally, we deduce from \eqref{eq:41}, by appealing to a
  standard diagonalization argument  (see \cite{MR773850}), that there exists a map $\e\mapsto \delta(\e)$ such that
  $c(\delta(\e),\e)\to 0$ as $\e\downarrow 0$. Hence, the diagonal sequence
  $u^\e:=v^{\delta(\e),\e}$ strongly converges in $L^2(V)$ to $u$, and its
  energy satisfies $\lim\limits_{\e\downarrow0}\int_VQ(\tfrac{x}{\e},\secf^\e(x))\,dx=e^0$.
  Since this especially implies that the associated sequence of fundamental
  forms $\secf^\e$ is bounded in $L^2(V)$, we can upgrade the
  convergence of $u^\e$ and deduce that $u^\e\wto u$ weakly in
  $W^{2,2}(V)$ as claimed. This in particular implies that
  $\secf^\e\wto\secf$ in weakly $L^2(V)$. Moreover, since each
  $u^{\delta(\e),\e}$ satisfies property \eqref{eq:A}, the same is true for  $u^\e$.
  \bigskip

  The rest of the proof is devoted to show statement (S) in Step~1. 

  \step 2 Line of curvature parametrisation of $u\vert_V$.
  \smallskip
  
  Recall that
  \begin{align*}
    &\Phi(t,s):=\Gamma(t)+sN(t),\qquad     N(t):=N(\Gamma(t)),\qquad T(t):=-N^\perp(\Gamma(t)).
  \end{align*}
  Following \cite{Hornung-11a} we introduce the  framed curve $(\gamma,R):[0,\ell]\to\R^3\times SO(3)$
  \begin{align*}
    &\gamma(t):=u(\Gamma(t)),\qquad\nu(t):=(\nabla u(\Gamma(t))N(t),\qquad
    n:=\gamma'(t)\wedge\nu(t),\\
    &R(t):=(\gamma'(t),\nu(t),n(t))^t.
  \end{align*}
  Then a direct computation shows that (e.g. see \cite[Proposition~1]{Hornung-11a})
  \begin{eqnarray*}
    u(\Phi(t,s))&=&\gamma(t)+s\nu(t),\\
    \nabla u(\Phi(t,s))&=&\gamma'(t)\otimes T(t)+\nu(t)\otimes N(t),\\
    \secf(\Phi(t,s))&=&\frac{\kappa_n(t)}{1-s\kappa(t)}(T(t)\otimes T(t)),
  \end{eqnarray*}
  with scalar  curvatures
  \begin{equation*}
    \kappa(t):=\Gamma''\cdot N,\qquad \kappa_n(t):=\gamma''(t)\cdot n(t),
  \end{equation*}
  and the frame $R$ is the unique solution in
  $W^{1,2}((0,\ell),SO(3))$ to the system
  \begin{equation*}
    R'=\left(
      \begin{array}{ccc}
        0&\kappa&\kappa_n\\
        -\kappa&0&0\\
        -\kappa_n&0&0
      \end{array}
    \right)R,\qquad R(0)=(\gamma'(0),\nu(0),n(0))^t.
  \end{equation*}

  \step 3 Manipulation of $\kappa_n$.
  \smallskip

  We claim that for any $\theta\in L^\infty([0,\ell])$ there exists
  $u_\theta\in W^{2,2}_{\iso}(V)$ satisfying property \eqref{eq:A},
  and
  \begin{equation}\label{eq:30}
    \secf_\theta(\Phi(t,s))=\frac{(1+\theta(t))\kappa_n(t)}{1-s\kappa(t)}(T(t)\otimes T(t)).
  \end{equation}
  Indeed, this follows from \cite[Proposition~2]{Hornung-11a}. For the
  convenience of the reader we briefly recall the
  construction: Let $R_\theta\in
  W^{1,2}((0,\ell),SO(3))$ be the unique solution to
  \begin{equation*}
    R'_\theta=\left(
      \begin{array}{ccc}
        0&\kappa&(1+\theta)\kappa_n\\
        -\kappa&0&0\\
        -(1+\theta)\kappa_n&0&0
      \end{array}
    \right)R_\theta,\qquad R_\theta(0)=(\gamma'(0),\nu(0),n(0))^t,
  \end{equation*}
  and define $\gamma_\theta$, $\nu_\theta$, $n_\theta$ via
  \begin{equation*}
    R_\theta=(\gamma'_\theta,\nu_\theta,n_\theta),\qquad \gamma_\theta(0)=\gamma(0).
  \end{equation*}
  Now the isometry $u_\theta:V\to\R^3$ is given by
  \begin{equation*}
    u_\theta(\Phi(t,s))=\gamma_\theta(t)+s\nu_\theta(t)
  \end{equation*}
  and its fundamental form satisfies \eqref{eq:30}. By construction
  $u_\theta$ satisfies property \eqref{eq:A}.
  \medskip

  \step 4 Proof of statement (S).
  \smallskip
  
  Let $J\in\N$, $T_1,\ldots,T_J$ and $\alpha_{j}$ as in statement (S). For $\e>0$ we define the function
  \begin{equation*}
    \theta^\e(x):=\sum_{j=1}^J\chi_{\nabla u,T_j}(x)\alpha_{j}'(\tfrac{T_j\cdot x}{\e}).
  \end{equation*}
  Note that we have $\theta^\e\in L^\infty$, since the $\alpha_j$'s
  are smooth and the sum is finite. Since $N(\Phi(t,s))$ is independent of $s$, the function
  \begin{equation*}
    \tilde\theta^\e(t,s):=\theta^\e(\Phi(t,s)),\qquad (t,s)\in M
  \end{equation*}
  is independent of $s$.
  Hence, an application of Step~3 shows that there exists an isometry $u^\e=u_{\theta^\e}$
  satisfying property \eqref{eq:A} and
  \begin{eqnarray*}
    \secf^\e(\Phi(t,s))&=&\frac{(1+\tilde\theta^\e(t))\kappa_n(t))}{1-s\kappa(t)}T(t)\otimes
    T(t).
  \end{eqnarray*}
  With $\Phi(t,s)=x$, this can be rewritten as
  \begin{equation}
    \secf^\e(x)=(1+\theta^\e(x))\secf(x).
  \end{equation}
  Since $y\mapsto\alpha_{j}'(T_j\cdot y)$ is a $\mathcal Y$-periodic function, we have
  \begin{equation*}
    \chi_{\nabla u,T_j}(x)\alpha'_{j}(\tfrac{T_j\cdot x}{\e})\secf(x)\stto
    \chi_{\nabla u,T_j}(x)\alpha'_{j}(T_j\cdot y)\secf(x)
  \end{equation*}
  strongly two-scale in $L^2(V\times\mathcal Y)$ for $j=1,\ldots,J$.
  Hence, \eqref{eq:46} follows by superposition.
  
\end{proof}

\begin{proof}[Proof of Lemma~\ref{L:RS:3}]
  We only need to consider the case with prescribed boundary conditions. Let $N:S\setminus \hat C_{\nabla u}\to\mathcal S^1$ be as
  in Lemma~\ref{L:RS:1}.  Here and below we assume that $\delta>0$ is so small that
  $L_{BC}\cap E_\delta\neq\emptyset$, where $E_\delta:=\{\,x\in
  S\,:\,\dist(x,\partial S)>\delta\,\}$.

  By appealing to a diagonalization argument similar to the one in the proof of
  Theorem~\ref{T:1} (c), we only need to prove the following statement:
  For all $\delta>0$ there exists a sequence $u^{\delta,\e}\in
  W^{2,2}_{\iso}(S)$ such that $u^{\delta,\e}$ satisfies \eqref{ass:BC} and
  \begin{equation}\label{eq:1}
   \limsup_{\delta\dto 0} \limsup\limits_{\e\downarrow
      0}\left(\|u^{\delta,\e}-u\|_{L^2(S)}+|\mathcal
      E^\e(u^{\delta,\e})-\mathcal E^0(u)|\right)=0\,.
  \end{equation}
\newcommand{\Vd}[1]{V^{(\delta)}_{#1}}
\newcommand{\Ld}{{{\mathcal L}^{(\delta)}}}
  Let us explain the construction of $u^{\delta,\e}$. By assumption,  $\nabla u$
  is finitely developable, and we may apply Theorem \ref{thm:horn2}.
Hence,
there exists a finite
  number of mutually disjoint patches
  $\Vd 1 ,\ldots,\Vd {m(\delta)}$  such that
  \begin{equation}\label{eq:38}
    E_\delta\setminus\hat C_{\nabla u}\subset
    \bigcup_{k=1}^{m(\delta)}\overline{\Vd k}=:V_\delta\,.
  \end{equation}
 In view of Definition~\ref{D:patch}, the boundary $\partial \Vd {k}\cap
  S$ of each patch $\Vd {k}$ consists of two connected components. They are line segments of the form $[x,N(x)]_S$. Define
  \begin{equation*}
    \Ld:=\big\{\,L\,:\,L\text{ is a connected component of }\partial
    {\Vd k} \cap S\text{ for some }1\leq k\leq m(\delta)\,\}\cup \{L_{BC}\},
  \end{equation*}
  and note that $u$ is affine on each $L\in \Ld$. We divide the rest of the argument into two steps.
  \smallskip

  \step 1 In this step $\delta$ is fixed. Hence we write $m(\delta)=m$,
    $\Vd{k}=V_k$, $\Ld=\mathcal L$. Also, the objects we introduce here will
    depend on $\delta$, but we are going to suppress the
    superscript $\delta$  to alleviate the notation. 
  Set $V_0:=\emptyset$. We claim that for $k=0,\ldots,m$ there
  exists a sequence $u^{\e}_k\in W^{2,2}_{\iso}(S)$ such that
  \begin{subequations}
    \begin{align}
      \label{L:RS:3:2a}
      &\text{  $u^{\e}_k$ is affine
        on each $L\in\mathcal L$},\\
      \label{L:RS:3:2b}
      &\secf^\e_k=\secf^\e_{k-1}\text{ a.e.~on
      }S\setminus
      V_k\quad (\text{for }k>0)\\
      \intertext{for all $\e>0$, and}
      \label{L:RS:3:2c}
      &\secf^\e_k\wto\secf\text{ weakly in }L^2(S)\text{ as $\e\downarrow
        0$},\\
      \label{L:RS:3:2d}
      &\lim\limits_{\e\downarrow 0}\int_{V_k}Q(\tfrac{x}{\e},\secf^\e_k(x))\,dx\\\notag
      &\qquad =\int_{V_k}(1-\chi_{\nabla
        u}(x))Q_{\av}(\secf(x))+\chi_{\nabla u}(x)Q_{\ho}(\secf(x))\,dx.
    \end{align}
  \end{subequations}
  We construct $u^{\e}_k$ inductively. The trivial
  sequence $u^{\e}_0:=u$ clearly satisfies
  \eqref{L:RS:3:2a} -- \eqref{L:RS:3:2d} for $k=0$.  Now assume that these
  properties are satisfied for some fixed index $0\leq k<m$ and a
  sequence $u^{\e}_k$. We apply Lemma~\ref{L:RS:2} to the
  patch $V_{k+1}$ and obtain a sequence $v^\e\in W^{2,2}_{\iso}(V_{k+1})$
  satisfying \eqref{L:RS:2:a} -- \eqref{L:RS:2:c}. 

  In the following we define $u^{\e}_{k+1}$ by ``merging'' $v^\e$ and
  $u^{\e}_{k}$. To that end let $\e>0$ be fixed for a moment. We claim that there exists $\tilde v\in W^{2,2}_{\iso}(S)$ that coincides with $v^\e$ on $V_{k+1}$, and is equal (up to a rigid motion) with $u^{\e}_{k}$ on
  each connected component of $S\setminus V_{k+1}$. Indeed, since $V_{k+1}$ is
  a patch, its boundary $\partial V_{k+1}\cap S$ consists of two
  line-segments $L_1,L_2\in\mathcal L$. Furthermore, due to the
  convexity of $S$, the set $S\setminus
  \overline V_{k+1}$ consists of two connected components $U_1$ and $U_2$. By \eqref{L:RS:2:c} and
  \eqref{L:RS:3:2a} the functions $v^\e$ and $u^{\e}_k$ are affine on
  $L_1$ and $L_2$. Hence, there exist rigid motions
  $\varphi_1,\varphi_2:\R^3\to\R^3$ such that 
  \begin{equation*}
    u^{\e}_{k+1}(x):=
    \begin{cases}
      \varphi_1\circ u^{\e}_k(x)&\text{if }x\in \overline U_1\cap S\\
      v^\e(x)&\text{if }x\in V_{k+1}\\
      \varphi_2\circ u^{\e}_k(x)&\text{if }x\in \overline U_2\cap S
    \end{cases}
  \end{equation*}
  defines a function in $W^{2,2}_{\iso}(S)$. We claim that  for each
  $L\in\mathcal L$
  \begin{equation}
    \label{eq:34}
    u^{\e}_{k+1}\text{ is affine on }L.
  \end{equation}
  For the argument we distinguish the two
  cases $L\cap V_{k+1}\neq\emptyset$ and $L\cap
  V_{k+1}=\emptyset$. In the latter case, the claim directly follows
  from property \eqref{L:RS:3:2a} and the fact that affine maps remain
  affine under composition with a rigid motion. Since the patches are mutually
  disjoint, and lines in $\mathcal L$ do not intersect, the case $L\cap V_{k+1}\neq\emptyset$ is only possible, if $L=L_{BC}$. Hence, there exists $x_0\in
  V_{k+1}\cap L_{BC}$. Since $V_{k+1}$ is a patch, $x_0$ necessarily belongs
  to $S\setminus\hat C_{\nabla u}$, and thus $L_{BC}=[x_0,N(x_0)]_{S}$
  due to the construction of $N$ (see Lemma~\ref{L:RS:1}). Now the
  claim follows from \eqref{L:RS:2:c}.

  It remains to check that $u^{\e}_{k+1}$ satisfies properties
  \eqref{L:RS:3:2b} -- \eqref{L:RS:3:2d}. Since the composition
  with a rigid motion does not change the second fundamental form, 
  $u^{\e}_{k+1}$ satisfies \eqref{L:RS:3:2b}, and properties
  \eqref{L:RS:3:2c} and \eqref{L:RS:3:2d} are inherited from properties
  \eqref{L:RS:2:a} and \eqref{L:RS:2:b} satisfied by $v^\e$.

  \step 2  Construction of $u^{\delta,\e}$.

  We set $u^{\delta,\e}:=\varphi^{\delta,\e}\circ u^{\delta,\e}_{m(\delta)}$, where
  $u^{\delta,\e}_{m(\delta)}\equiv u^{\e}_{m}$ is the
  isometry constructed in Step~1, and $\varphi^{\delta,\e}$ is a
  rigid motion, which is chosen in such a way that $u^{\delta,\e}$
  satisfies \eqref{ass:BC}. (Note that this is possible, since
  $u^{\delta,\e}_{m(\delta)}$ is affine on $L_{BC}$ by
  \eqref{L:RS:3:2a}). Recall the definition of $V_\delta$, see
  \eqref{eq:38}. From
  \eqref{L:RS:3:2a} -- \eqref{L:RS:3:2d} we learn that 
  \begin{align}
    \label{L:RS:3:3a}
    &\secf^{\delta,\e}=\secf\qquad\text{ on }S\setminus V_\delta,\\
    \intertext{and as $\e\downarrow 0$:}
    \label{L:RS:3:3b}
    &\secf^{\delta,\e}\wto\secf\qquad\text{weakly in }L^2(S)\text{ as
    }\e\downarrow 0,\\
    \label{L:RS:3:3c}
    &\lim\limits_{\e\downarrow 0}\int_{V_\delta}Q(\tfrac{x}{\e},\secf^{\delta,\e}(x))\,dx\\
    \notag
    &\qquad=
    \int_{V_\delta}(1-\chi_{\nabla u}(x))Q_{\av}(\secf(x))+\chi_{\nabla u}(x)Q_{\ho}(\secf(x))\,dx.
  \end{align}
  Since $u^{\delta,\e}$ satisfies \eqref{ass:BC}, we deduce from \eqref{L:RS:3:3b}
  that 
  \begin{equation}
\|u^{\delta,\e}-u\|_{L^2(S)}\to 0\quad\text{ as }
  \e\downarrow 0\,.\label{eq:39}
  \end{equation}
Next we estimate the difference
  $\mathcal E^{\e}(u^{\delta,\e})-\mathcal
  E^0(u)$. From \eqref{L:RS:3:3a} and \eqref{L:RS:3:3c} we deduce that
  \begin{eqnarray*}
    &&\lim\limits_{\e\downarrow 0}\mathcal
    E^{\e}(u^{\delta,\e})\\
    &=&\int_{V_\delta}(1-\chi_{\nabla
      u}(x))Q_{\av}(\secf(x))+\chi_{\nabla
      u}(x)Q_{\ho}(\secf(x))\,dx\\
    &&+\int_{S\setminus V_{\delta}}Q_{\av}(\secf(x))\,dx\\
    &=&\int_{S}(1-\chi_{\nabla
      u}(x))Q_{\av}(\secf(x))+\chi_{\nabla
      u}(x)Q_{\ho}(\secf(x))\,dx\\
    &&-\int_{S\setminus V_{\delta}}(1-\chi_{\nabla
      u}(x))Q_{\av}(\secf(x))+\chi_{\nabla
      u}(x)Q_{\ho}(\secf(x))\,dx\\
    &&+\int_{S\setminus V_{\delta}}Q_{\av}(\secf(x))\,dx    
  \end{eqnarray*}
  Since $Q_{\ho}(F)\leq Q_{\av}(F)\leq \frac{1}{\alpha}|\sym F|^2$, where
  $\alpha$ is the constant of ellipticity
  (cf. \eqref{ass:Q}), and because $S\setminus
  V_{\delta}\subset S\setminus E_\delta$, we finally get
  \begin{equation}
    \lim\limits_{\e\downarrow 0}\Big|\mathcal
    E^{\e}(u^{\delta,\e})-\mathcal
    E^0(u)\Big|\,\leq\,\frac{2}{\alpha}\int_{S\setminus E_\delta}|\secf(x)|^2\,dx    .\label{eq:35}
  \end{equation}
In combination with \eqref{eq:39}, this proves \eqref{eq:1} and thus completes the proof of the lemma.
\end{proof}

\bibliographystyle{alpha}

\bibliography{Neukamm-Olbermann-2013}   

\begin{thebibliography}{AHM11}

\bibitem[AHM11]{Hafsa-Mandallena-11}
Omar Anza~Hafsa and Jean-Philippe Mandallena.
\newblock Homogenization of nonconvex integrals with convex growth.
\newblock {\em Journal de Math{\'e}matiques Pures et Appliqu{\'e}es},
  96(2):167--189, 2011.

\bibitem[All92]{Allaire-92}
Gr{\'e}goire Allaire.
\newblock Homogenization and two-scale convergence.
\newblock {\em SIAM J. Math. Anal.}, 23(6):1482--1518, 1992.

\bibitem[Att84]{MR773850}
H.~Attouch.
\newblock {\em Variational convergence for functions and operators}.
\newblock Applicable Mathematics Series. Pitman (Advanced Publishing Program),
  Boston, MA, 1984.

\bibitem[FJM02]{Friesecke-James-Mueller-02}
Gero Friesecke, Richard~D. James, and Stefan M{\"u}ller.
\newblock A theorem on geometric rigidity and the derivation of nonlinear plate
  theory from three-dimensional elasticity.
\newblock {\em Comm. Pure Appl. Math.}, 55(11):1461--1506, 2002.

\bibitem[FJM06]{Friesecke-James-Mueller-06}
Gero Friesecke, Richard~D. James, and Stefan M{\"u}ller.
\newblock A hierarchy of plate models derived from nonlinear elasticity by
  gamma-convergence.
\newblock {\em Arch. Ration. Mech. Anal.}, 180(2):183--236, 2006.

\bibitem[FK10]{Fonseca-Kroemer-10}
Irene Fonseca and Stefan Kr{\"o}mer.
\newblock Multiple integrals under differential constraints: two-scale
  convergence and homogenization.
\newblock {\em Indiana Univ. Math. J.}, 59(2):427--457, 2010.

\bibitem[HN59]{Hartmann-Nierenberg-1959}
Philip Hartman and Louis Nirenberg.
\newblock On spherical image maps whose {J}acobians do not change sign.
\newblock {\em Amer. J. Math.}, 81:901--920, 1959.

\bibitem[HNV13]{HorNeuVel-13}
Peter Hornung, Stefan Neukamm, and Igor Vel{\v{c}}i{\'c}.
\newblock Derivation of a homogenized nonlinear plate theory from 3d
  elasticity.
\newblock {\em Calculus of Variations and Partial Differential Equations},
  pages 1--23, 2013.

\bibitem[Hor11a]{Hornung-11b}
Peter Hornung.
\newblock Approximation of flat {$W^{2,2}$} isometric immersions by smooth
  ones.
\newblock {\em Arch. Ration. Mech. Anal.}, 199(3):1015--1067, 2011.

\bibitem[Hor11b]{Hornung-11a}
Peter Hornung.
\newblock Fine level set structure of flat isometric immersions.
\newblock {\em Arch. Ration. Mech. Anal.}, 199(3):943--1014, 2011.

\bibitem[HV12]{Hornung-Velcic-2012}
Peter Hornung and Igor Velcic.
\newblock Derivation of a homogenized von-karman shell theory from 3d
  elasticity.
\newblock {\em arXiv preprint arXiv:1211.0045}, 2012.

\bibitem[Kir01]{Kirchheim-01}
Bernd Kirchheim.
\newblock Geometry and rigidity of microstructures, 2001.
\newblock Habilitation Thesis, Universit\"at Leipzig.

\bibitem[MP05]{Mueller-Pakzad-05}
Stefan M{\"u}ller and Mohammad~Reza Pakzad.
\newblock Regularity properties of isometric immersions.
\newblock {\em Math. Z.}, 251(2):313--331, 2005.

\bibitem[Neu10]{Neukamm-10}
Stefan Neukamm.
\newblock Homogenization, linearization and dimension reduction in elasticity
  with variational methods, 2010.
\newblock PhD thesis, Technische Universit\"at M\"unchen.

\bibitem[Neu12]{Neukamm-12}
Stefan Neukamm.
\newblock Rigorous derivation of a homogenized bending-torsion theory for
  inextensible rods from three-dimensional elasticity.
\newblock {\em Arch. Ration. Mech. Anal.}, 206(2):645--706, 2012.

\bibitem[Ngu89]{Nguetseng-89}
Gabriel Nguetseng.
\newblock A general convergence result for a functional related to the theory
  of homogenization.
\newblock {\em SIAM J. Math. Anal.}, 20(3):608--623, 1989.

\bibitem[NV13]{Neukamm-Velcic-2013}
Stefan Neukamm and Igor Vel{\v{c}}i{\'c}.
\newblock Derivation of a homogenized von-k{\'a}rm{\'a}n plate theory from 3d
  nonlinear elasticity.
\newblock {\em Mathematical Models and Methods in Applied Sciences},
  23(14):2701--2748, 2013.

\bibitem[Pak04]{Pakzad-04}
Mohammad~Reza Pakzad.
\newblock On the {S}obolev space of isometric immersions.
\newblock {\em J. Differential Geom.}, 66(1):47--69, 2004.

\bibitem[Vel12]{Velcic-2012}
Igor Velcic.
\newblock A note on the derivation of homogenized bending plate model.
\newblock {\em arXiv preprint arXiv:1212.2594}, 2012.

\bibitem[Vis06]{Visintin-06}
Augusto Visintin.
\newblock Towards a two-scale calculus.
\newblock {\em ESAIM Control Optim. Calc. Var.}, 12(3):371--397 (electronic),
  2006.

\bibitem[Vis07]{Visintin-07}
Augusto Visintin.
\newblock Two-scale convergence of some integral functionals.
\newblock {\em Calc. Var. Partial Differential Equations}, 29(2):239--265,
  2007.

\end{thebibliography}

\end{document}